\documentclass{article}
\usepackage[margin=1in]{geometry}
\usepackage{amsmath,amsthm,amssymb,amsfonts, enumerate, marvosym, tipa, stmaryrd,mathtools,mathrsfs,gensymb,mathtools,mathdots,url,url,hyperref,wasysym}
\usepackage{faktor}
\usepackage{multicol}
\usepackage[all]{xy}
\usepackage{fancyhdr}
\usepackage[style=ieee]{biblatex}

\hypersetup{
    colorlinks=true,
    linkcolor=blue,
    filecolor=magenta,      
    urlcolor=cyan,
}

\newcommand{\N}{\mathbb{N}}

\newcommand{\Q}{\mathbb{Q}}

\newcommand{\B}{\mathcal{B}}
\newcommand{\norm}[1]{\left\vert\left\vert#1\right\vert\right\vert}
\newcommand{\abs}[1]{\left\vert#1\right\vert}

\newcommand{\Zplus}{\mathbb{Z}^{+}}

\DeclareMathOperator{\id}{id}
\DeclareMathOperator{\proj}{proj}
\newcommand{\malg}[1]{\textmd{MALG}(#1)}
\newcommand{\CT}{\textmd{CT}}

\let\oldref\ref
\renewcommand{\ref}[1]{(\oldref{#1})}

\newtheorem{theorem}{Theorem}[section]

\newtheorem{proposition}[theorem]{Proposition}
\newtheorem{corollary}[theorem]{Corollary}
\newtheorem{lemma}[theorem]{Lemma}
\newtheorem{claim}[theorem]{Claim}
\newtheorem{observation}[theorem]{Observation}

\theoremstyle{definition}

\newtheorem{definition}[theorem]{Definition}
\newtheorem{remark}[theorem]{Remark}

\begin{document}
\title{A Backward Ergodic Theorem for Uncountable-to-one Transformations}
\author{Eric Wang}
\date{}
\maketitle

\rfoot{Page \thepage}
\begin{abstract}
    \noindent We establish a generalization of Anush Tserunyan and Jenna Zomback's 2024 Backward Ergodic Theorem. We remove the countable-to-one assumption and thus provide a backward ergodic theorem for arbitrary measure-preserving transformations. However, this new setting introduces measurability concerns as unlike the countable-to-one case, we no longer have a collection of Borel right inverses. Instead, we must rely on the Jankov, von Neumann uniformization theorem. Towards this, we use Borel and measured field structures introduced by Stefaan Vaes and Lise Wouters.
\end{abstract}
\section{Introduction}
In ergodic theory, local-global principles characterize the connection between the orbits of individual points under a measure-preserving transformation and global properties of the transformation. For example, 
Birkhoff's original pointwise ergodic theorem from 1931, appearing in \cite{birkhoffergodic}, states:
\begin{theorem}
    Let $ X $ be a probability space and $ T:X\rightarrow X $ be a measure-preserving transformation. If $ f\in L^1(X,\mu) $, then for almost every $ x\in X $,
    \[
    \sum_{i=0}^N f(T^i(x)) \rightarrow \mathbb{E}(f|\mathcal{B}_\mathcal{I}),\quad N\to \infty 
    \]
    where $ \mathbb{E}(f|\mathcal{B}_{\mathcal{I}}) $ is the conditional expectation of $ f $ with respect to the $ \sigma $-algebra of invariant sets.\\\\
    In particular, if $ T $ is ergodic, then $ \mathbb{E}(f|\mathcal{B}_\mathcal{I}) = \int fd\mu $ a.e.
\end{theorem}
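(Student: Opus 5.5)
We read the displayed sum as the ergodic average $A_Nf(x)=\frac{1}{N+1}\sum_{i=0}^{N} f(T^i(x))$; without the normalization the statement is false (take $f\equiv 1$). We would follow the classical route through the maximal ergodic inequality. The plan is to first prove Garsia's maximal ergodic lemma. For $g\in L^1(X,\mu)$, put $S_n g=\sum_{i=0}^{n-1} g\circ T^i$ and $M_N g=\max_{0\le n\le N} S_n g$, with $S_0g=0$. Then
\[
\int_{\{M_N g>0\}} g\,d\mu \ \ge\ 0 .
\]
The proof is short. On the set $\{M_Ng>0\}$ one has $M_N g\le g+M_N g\circ T$. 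Integrating over that set and using that $T$ is measure-preserving, so that $\int M_Ng\circ T\,d\mu=\int M_Ng\,d\mu$, gives the inequality. Letting $N\to\infty$ yields $\int_{E} g\,d\mu\ge 0$, where $E=\{\sup_n S_n g>0\}$.

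Next we would use the maximal lemma to show that the averages converge. Let $\overline{f}=\limsup_N A_Nf$ and $\underline{f}=\liminf_N A_Nf$. Both are $T$-invariant, since $A_Nf\circ T$ and $A_Nf$ differ by $O(\|f\|$-pointwise$/N)$ terms that vanish in the limit. For rationals $a<b$, consider the invariant set $E_{a,b}=\{\underline f<a<b<\overline f\}$. Apply the maximal lemma to $g=(f-b)\mathbf 1_{E_{a,b}}$. Because $E_{a,b}$ is invariant, $\{\sup_n S_n g>0\}=E_{a,b}$, so $\int_{E_{a,b}} (f-b)\,d\mu\ge 0$. Applying the lemma to $g=(a-f)\mathbf 1_{E_{a,b}}$ gives $\int_{E_{a,b}}(a-f)\,d\mu\ge 0$. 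Adding the two inequalities gives $(a-b)\mu(E_{a,b})\ge 0$, so $\mu(E_{a,b})=0$. Taking the countable union over rational pairs, $A_Nf$ converges a.e. to a limit $f^*$ with values in $[-\infty,\infty]$. Fatou's lemma together with $\|A_Nf\|_1\le \|f\|_1$ shows that $f^*\in L^1$, and in particular $f^*$ is finite a.e.

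Finally we identify $f^*$. It is $T$-invariant, hence $\mathcal B_{\mathcal I}$-measurable after modification on a null set. It remains to show $\int_B f^*\,d\mu=\int_B f\,d\mu$ for every $B\in\mathcal B_{\mathcal I}$. For bounded $f$ this follows from dominated convergence, because $\int_B A_Nf\,d\mu=\int_B f\,d\mu$ for invariant $B$. For general $f$ we approximate by bounded $h$ with $\|f-h\|_1<\varepsilon$. The maximal lemma also gives the weak-type bound $\mu(\sup_N |A_N(f-h)|>\lambda)\le \|f-h\|_1/\lambda$. Combining this with Fatou's lemma gives $\|f^*-h^*\|_1\le\|f-h\|_1$, and then we let $\varepsilon\to0$. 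In the ergodic case $\mathcal B_{\mathcal I}$ is trivial mod null sets, so $\mathbb E(f\mid\mathcal B_{\mathcal I})=\int f\,d\mu$ a.e.

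We expect the main obstacle to be the maximal lemma itself, specifically the pointwise inequality $M_Ng\le g+(M_Ng\circ T)^+$ on the relevant set. Everything else is bookkeeping with invariant sets and $L^1$ approximation. The step that needs the most care afterwards is the $L^1$ identification of the limit for unbounded $f$.
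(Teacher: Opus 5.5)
The paper does not prove this statement. It is quoted in the introduction as Birkhoff's 1931 theorem, purely as motivation, so there is no in-paper proof to compare against. Your argument is the standard Garsia maximal-lemma proof and it is correct. You are also right that the displayed sum needs the normalization $\frac{1}{N+1}$; as written, the statement fails already for $f\equiv 1$. Two small fixes:

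\begin{enumerate}
\item Your justification of invariance is off as worded. In fact $A_Nf(Tx)-A_Nf(x)=\frac{f(T^{N+1}x)-f(x)}{N+1}$, and the term $f(T^{N+1}x)/(N+1)$ is not controlled pointwise by $|f(x)|$. It does tend to $0$ a.e., but that needs a Borel--Cantelli argument. The clean route is the identity $\frac{N+1}{N+2}A_Nf(Tx)=A_{N+1}f(x)-\frac{f(x)}{N+2}$. It gives $\overline f\circ T=\overline f$ and $\underline f\circ T=\underline f$ at every point where $f$ is finite. Hence $E_{a,b}$ is strictly invariant, which is exactly what you use when writing $S_n g=\mathbf{1}_{E_{a,b}}S_n(f-b)$.
\item The weak-type bound in your last step is unnecessary. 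Since $f^*-h^*=\lim_N A_N(f-h)$ a.e. by linearity, Fatou together with $\|A_N(f-h)\|_1\le\|f-h\|_1$ already gives $\|f^*-h^*\|_1\le\|f-h\|_1$.
\end{enumerate}

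For contrast, the paper's method for its own backward theorem, following Tserunyan--Zomback, is not your maximal-lemma route. It is the tree analogue of the covering proof of Birkhoff's theorem (Kamae, Katznelson--Weiss, Keane--Petersen). That argument runs as follows:
\begin{enumerate}
\item Invariance of $f^*$ is proved separately.
\item One assumes $f^*>0$ on an invariant set.
\item A measurable choice of witnesses is used to tile all but an $\varepsilon$-fraction of a long complete tree $\triangleright^N_T\cdot x$.
\item A contradiction comes from the local-global identity $\int f\,d\mu=\int A_{f,\triangleright^N_T\cdot x}\,d\mu(x)$.
\end{enumerate}
The maximal inequality is then obtained by the same tiling, using $Y$-saturated witnesses.

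For the forward theorem your route is shorter, and it delivers the maximal inequality and the $L^1$ bound on the limit along the way. The covering route avoids the telescoping inequality $M_Ng\le g+M_Ng\circ T$ altogether and needs only a measurable assignment of witnesses plus a local-global identity. That is the form the paper adapts to preimage trees.
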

\noindent Essentially, if a system is sufficiently chaotic, more specifically, ergodic, then for almost every point, the forward time average will equal the space average.\\\\
Since then, there have been many other similar theorems that treat more complicated dynamical systems and take more exotic averages over more different sets. For example, in 1987, Grigorchuk, and in 1994, Nevo and Stein in \cite{nevo1994generalization}, independently used weighted averages over spheres to prove an ergodic theorem of measure-preserving actions of free groups. 
\begin{theorem}
    Let $ r\geq 2 $ and let $ \mathbb{F}_r $ be the free group with $ r $ generators acting on the measure space $ (X,\mathcal{B},\mu) $ by measure-preserving transformations. Then for every $ f\in L^1(X,\mu) $, almost every $ x\in X $,
    \[
    \frac{1}{N+1} \sum_{i=0}^N \frac{1}{\abs{S_i}} \sum_{w\in S_i} f(w\cdot x) \to \mathbb{E}(f|\mathcal{B}_{\mathcal{I}}),\quad N\to \infty 
    \]
    where $ \mathcal{E}(f|\mathcal{B}_\mathcal{I}) $ is the conditional expectation of $ f $ with respect to the $ \sigma $-algebra of $ \mathbb{F}_r $-invariant sets, and $ S_N $ denotes the sphere of radius $ n $ centered at the identity of $ \mathbb{F}_r $.
\end{theorem}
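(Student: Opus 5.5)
The plan is to follow the Markov-operator approach of Bufetov, adapted to the free group as Nevo and Stein do, and to reduce the statement to a pointwise ergodic theorem for powers of a self-adjoint Markov operator. Let $a_1,\dots,a_r$ be the free generators. Put $\sigma_n = \frac{1}{|S_n|}\sum_{w\in S_n} f(w\cdot x)$ and $\mu_1 = \sigma_1$, the normalized sum over the $2r$ generators and their inverses. Write $q=2r-1$. Counting reduced words gives the spherical recursion
\[
\sigma_1\sigma_n = \tfrac{q}{q+1}\sigma_{n+1} + \tfrac{1}{q+1}\sigma_{n-1},\qquad n\ge 1 .
\]
Hence each $\sigma_n$ is a polynomial $p_n(\sigma_1)$ of degree $n$, where $\sigma_1$ is viewed as an operator on $L^p(X)$. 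The $p_n$ are essentially Chebyshev-type polynomials.

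The first step is to pass to a Markov chain on an extended space. Let $Y = X\times\{\pm a_1,\dots,\pm a_r\}$ carry the product of $\mu$ and uniform measure. Define the Markov operator $P$ on $Y$ by $P g(x,s)=\frac{1}{q}\sum_{t\ne s^{-1}} g(t\cdot x,t)$. This operator moves along non-backtracking reduced words, so its powers satisfy
\[
P^n \tilde f(x,s) = \text{average of } f(w\cdot x) \text{ over reduced words } w \text{ of length } n \text{ not beginning with } s^{-1},
\]
where $\tilde f(x,s)=f(x)$. Averaging over $s$ then recovers $\sigma_n$ up to a bounded correction involving $\sigma_{n-1}$ and $\sigma_{n+1}$. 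So the problem becomes controlling $\frac{1}{N+1}\sum_{n\le N} P^n$. The operator $P$ preserves the measure on $Y$ but is not self-adjoint. Its adjoint, however, is the analogous operator with the time direction reversed.

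The second step establishes the pointwise ergodic theorem for Ces\`aro averages of $P^n$, following Bufetov's argument for Markov operators with a time-reversing conjugacy. One builds the two-sided Markov shift on $Y^{\mathbb Z}$ with the stationary measure, which is shift-invariant. The averages of $P^n\tilde f$ are then conditional expectations, given the past coordinate, of Birkhoff averages of $\tilde f$ under the shift. Birkhoff's theorem (stated above) applies to the shift. Combining it with the Rota / Hopf--Dunford--Schwartz maximal inequality handles the exchange of limit and conditional expectation, and gives a.e. convergence for $f\in L^1$. One first obtains convergence on $L^\infty$ or on a dense class, then extends to all of $L^1$ by the weak-$(1,1)$ maximal inequality.

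The third step identifies the limit and transfers it back. The limit is the projection onto $P$-invariant functions. A function on $Y$ invariant under $P$ and $P^*$ must be constant in the generator coordinate and $\mathbb F_r$-invariant on $X$. The argument uses that $P^*P$ averages over neighbors, so invariance forces $g(t\cdot x,t)$ to be independent of $t$. Consequently the limit equals $\mathbb E(f\mid\mathcal B_{\mathcal I})$. Finally I would undo the correction relating $\sigma_n$ to the $P^n\tilde f$ averages. This correction telescopes in the Ces\`aro sum up to terms of size $O(\|\sigma_n\|/N)$, and those terms vanish a.e. by the maximal inequality.

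The main obstacle is the second step, namely proving a.e. convergence of Ces\`aro averages of a non-self-adjoint Markov operator. There is no off-the-shelf pointwise theorem for general Markov operators, and Ornstein's counterexamples show that naive versions fail. I would first try to exploit the reversal symmetry, $P^* = JPJ$ with $J(x,s)=(s\cdot x,s^{-1})$. This would allow writing the averages as a reversed martingale composed with a stationary shift. That route reduces a.e. convergence to Birkhoff's theorem together with Doob's martingale convergence theorem.
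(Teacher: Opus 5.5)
The paper gives no proof of this statement. It is quoted in the introduction as a known result of Grigorchuk and of Nevo--Stein, as motivation, so there is no proof of record to compare against. On its own terms your outline is a correct Markov-operator proof, essentially the classical argument for the Ces\`aro statement. It differs from the route the paper's framework suggests, namely Tserunyan--Zomback's derivation of free-group theorems from a backward ergodic theorem. That route gives convergence along far more general trees than spheres, but needs the tiling and gluing machinery; yours gives exactly the Ces\`aro statement from off-the-shelf theorems. The two are close in one respect: the complete-tree case there, $A_{f,\triangleright^N_T\cdot x}=\frac{1}{N+1}\sum_{i\le N}P_{T^i}(f)(x)$ with $P_{T^i}=P_T^i$, is itself a Ces\`aro average of a Markov operator, like your $\frac{1}{N+1}\sum_{n\le N}P^n$.

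Your Steps 1 and 3 check out. $\nu P=\nu$ holds because each $t$ preserves $\mu$ and is admissible for exactly $q$ values of $s$. If $Pg=g$, then $\norm{Pg}_2=\norm{g}_2$, so equality in Jensen forces $t\mapsto g(t\cdot x,t)$ to be constant on each set $\{t\neq s^{-1}\}$. These sets overlap because $r\ge 2$, hence $g(x,s)=h(x)$ with $h$ invariant under every generator. The $L^2$ identification of the limit then extends to $L^1$ by $L^1$-continuity of the limit operator.

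Where you go wrong is in locating the difficulty. Step 2 is not an obstacle: $P$ is positive, $P1=1$ and $\int Pg\,d\nu=\int g\,d\nu$, so $P$ is a contraction of both $L^1(\nu)$ and $L^\infty(\nu)$. The Hopf--Dunford--Schwartz ergodic theorem then gives a.e.\ convergence of $\frac{1}{N+1}\sum_{n\le N}P^n g$ for every $g\in L^1(\nu)$ outright. Ornstein's counterexample concerns the unaveraged powers $P^n$. That is the issue for $\sigma_{2n}f$ itself (the Nevo--Stein and Bufetov theorems), not for Ces\`aro averages. So the path space, the time reversal, Rota and Doob can all be dropped. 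Your shift argument is valid, but it already invokes the Hopf--Dunford--Schwartz maximal inequality, which comes packaged with the pointwise theorem.

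Two smaller corrections. First, there is no correction term. A reduced word of length $n\ge 1$ is excluded for exactly one $s$, so $\frac{1}{2r}\sum_s P^n\tilde f(x,s)=\frac{2r-1}{2r\,q^n}\sum_{|w|=n}f(w\cdot x)=\sigma_n f(x)$ exactly, and your final telescoping step is unnecessary. Second, with your convention the involution intertwining $P$ and $P^*$ is $J(x,s)=(s^{-1}\cdot x,s^{-1})$; with $J(x,s)=(s\cdot x,s^{-1})$ one gets $JPJ\neq P^*$. The spherical recursion and the Chebyshev polynomials play no role and can be omitted.
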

\noindent Most recently, in 2024, Tserunyan and Zomback proved pointwise ergodic theorems for measure-preserving actions of free groups and for boundary actions of free groups in \cite{anushpaper}. The backbone for these two theorems was their Backward Ergodic Theorem for countable-to-one measure preserving transformations, which we set up and state below:\\\\
Let $ T $ be an aperiodic countable-to-one pmp Borel transformation on a standard probability space $ (X,\mu) $. Define $ E_T $ to beh orbit equivalence relation of $ T $ - where $ (x,y)\in E_T $ if and only if $ \exists m,n\in \N $ such that $ T^m = T^n y $. Then, let $ (x,y)\mapsto \rho_x(y) $ be the Radon-Nikodym cocucle of $ E_T $. Then:
\begin{theorem}
    \label{countable-to-one}(Tserunyan-Zomback 2024) For every $ f\in L^1(X,\mu) $ and a.e. $ x\in X $, 
    \[
    \frac{\sum_{y\in \tau_x} f(y)\rho_x(y) }{\sum_{y\in \tau_x}\rho_x(y)} \rightarrow \mathbb{E}(f|\mathcal{B}_I) \textmd{ as }\sum_{y\in \tau_x}\rho_x(y)\rightarrow \infty 
    \]
    where $ \tau_x $ ranges over finite-height trees rooted at $ x $. 
\end{theorem}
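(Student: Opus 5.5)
This is Tserunyan--Zomback's theorem, so the plan is to reconstruct their argument: a maximal inequality for the backward tree averages, followed by convergence on a dense class of functions.

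\textbf{Setting up the trees.} Write $\rho$-weighted sums over a tree $\tau_x$ as
\[
A_{\tau_x} f(x) = \frac{\sum_{y\in\tau_x} f(y)\rho_x(y)}{\sum_{y\in\tau_x}\rho_x(y)}.
\]
Here $\tau_x$ is a finite set of backward iterates $y$ with $T^n y = x$. It is closed under $T$ until it reaches $x$, i.e.\ it is a subtree of the backward tree rooted at $x$ whose edges are the $T$-preimages.

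Since $T$ is countable-to-one, Lusin--Novikov gives countably many Borel right inverses of $T$. These let us treat the backward tree measurably. The key identity comes from the cocycle relation together with the fact that $T$ preserves $\mu$:
\[
\sum_{Ty=x}\rho_x(y) = 1 \quad \text{for a.e. } x.
\]
This is the mass transport principle applied to the graph of $T$. As a consequence, the weights $\rho_x$ on each backward level $T^{-n}(x)$ sum to $1$. So $\rho_x$ restricted to level $n$ is a probability measure, and the weighted mass of a tree of height $n$ is at most $n+1$. The condition $\sum_{y\in\tau_x}\rho_x(y)\to\infty$ therefore forces the height to go to infinity in a controlled way.

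\textbf{Step 1: the maximal inequality.} I would prove a weak-type $(1,1)$ bound for
\[
f^*(x) = \sup_{\tau_x} A_{\tau_x}\abs{f}(x).
\]
The tool is a backward tiling (Riesz-type covering) argument. Given the set where some tree average exceeds $\lambda$, choose witnessing trees Borel-ly using the right inverses. Then run a transport: every $x$ sends mass $\rho_x(y)/\rho_x(\tau_x)$ to each $y\in\tau_x$. The mass transport principle converts this into $\int \abs{f}\,d\mu \ge \lambda\,\mu(\{f^*>\lambda\})$, up to a constant. The overlaps of trees are handled by the usual greedy selection: process points in the forward direction and take trees disjoint from those already chosen.

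\textbf{Step 2: the dense class.} Next, verify convergence on a dense subspace of $L^1$. This consists of the $T$-invariant functions, where the averages are trivially constant, together with coboundaries $g - g\circ T$ for bounded $g$. For a coboundary, the weighted sum over $\tau_x$ telescopes level by level because $\sum_{Ty=z}\rho_x(y) = \rho_x(z)$. What remains is a boundary term: the mass on the leaves minus the root, which is $O(\norm{g}_\infty)$. Dividing by $\sum_{y\in\tau_x}\rho_x(y)\to\infty$ gives $0$.

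The invariant functions plus the closure of the coboundaries exhaust $L^1$ by the mean ergodic decomposition, using duality with $L^\infty$. The limit is identified as $\mathbb{E}(f|\mathcal{B}_I)$ because $A_{\tau_x}$ fixes invariant functions and $\mathbb{E}(\cdot|\mathcal{B}_I)$ kills coboundaries. A Banach-principle argument combining Step 1 and Step 2 then gives a.e.\ convergence for all $f\in L^1$.

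\textbf{Main obstacle.} I expect Step 1 to be the main difficulty. The trees are arbitrary: they need not be full levels, and the leaves can lie at differing depths. The telescoping in Step 2 also depends on how the leaves are weighted, and a coboundary's boundary term on leaves of mixed depth must still be bounded by $\norm{g}_\infty$ times the number of levels involved. That bound needs care, and I would first check it on trees that are unions of full levels.
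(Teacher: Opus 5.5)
Your route is correct in outline but differs from the paper's. The paper does not prove this statement directly. It quotes it from Tserunyan--Zomback, and the appendix recovers it from the paper's general theorem by identifying $\mu^i_x$ with $\rho_x$ on $T^{-i}\{x\}$ and coherent trees with finite-height trees. That general theorem uses no maximal inequality and no density argument. It first shows $f^*$ is $T$-invariant by gluing witness trees behind $T^{-1}\{y\}$ (Lemma \ref{importantforinvariance}). It then assumes $f^*>0$, selects witnesses for $f-g$ with $g=\min\{f^*/2,1\}$ via Jankov--von Neumann, greedily tiles most of each complete tree $\triangleright^N_T\cdot x$ by witnesses of bounded length, and contradicts the local-global bridge (Lemma \ref{localglobal}).

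You instead exploit linearity: a weak-type bound, convergence on invariant functions plus bounded coboundaries, the von Neumann decomposition for the isometry $g\mapsto g\circ T$, and the Banach principle. The decomposition works because the adjoint of $g\mapsto g\circ T$ is the transfer operator $P_T$, so $P_T$-fixed vectors are $T$-invariant; this is what your duality step needs. Your route avoids the invariance lemma and, with Step 1 done as below, any measurable choice of witnesses. In the paper's uncountable setting, that choice is what requires the gluing lemma and Jankov--von Neumann. The paper's route, in exchange, is a single combinatorial tiling scheme that also yields the maximal theorem and mirrors the original argument.

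Two steps need tightening.

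\textbf{Step 2.} Your worry here is unfounded, and it matters that it is. For any tree put $c_z=\sum_{y\in\tau_x\cap T^{-1}\{z\}}\rho_x(y)\le\rho_x(z)$. Every non-root vertex is a child of exactly one vertex of $\tau_x$, so the weights $\rho_x(z)-c_z\ge 0$ sum to $\rho_x(x)=1$, and
\[
\sum_{y\in\tau_x}\bigl(g(y)-g(Ty)\bigr)\rho_x(y)=-g(Tx)+\sum_{z\in\tau_x}g(z)\bigl(\rho_x(z)-c_z\bigr).
\]
This is bounded by $2\norm{g}_\infty$ regardless of leaf depths. A bound growing with the height would not suffice: for a $2$-to-$1$ map with weights $\tfrac12$, the full tree of depth $m$ plus one path continued to depth $e^m$ has mass below $m+2$.

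\textbf{Step 1.} A global greedy choice of disjoint trees ``in the forward direction'' does not work as stated. Forward orbits have no first point, and disjointness alone does not control the unselected part of $\{f^*>\lambda\}$. Processing from deep points toward the root loses every point whose tree contains an already chosen root. There are two fixes:
\begin{enumerate}
\item Tile each $\triangleright^N_T\cdot x$ from the root outward with witnesses of height at most $L$, and integrate by the local-global bridge.
\item Avoid selection entirely. Let $F_N(x)$ be the supremum of $\sum_{y\in\tau_x}(f(y)-\lambda)\rho_x(y)$ over trees of height at most $N$. The cocycle identity gives $F_N=f-\lambda+P_T(F_{N-1}^+)$, so $f-\lambda\ge F_N^+-P_T(F_N^+)$ on $\{F_N>0\}$. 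Integrating, and using $P_T(F_N^+)\ge0$ and $\int P_T h\,d\mu=\int h\,d\mu$, gives $\int_{\{F_N>0\}}(f-\lambda)\,d\mu\ge 0$. Letting $N\to\infty$ yields $\int_{\{\sup_{\tau_x}A_{\tau_x}f>\lambda\}}(f-\lambda)\,d\mu\ge0$. Applying this to $\abs{f}$ gives your weak-type bound.
\end{enumerate}
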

\noindent Essentially, when averaging over heavier and heavier trees behind $ x $, the average converges to the conditional expectation of $ f $ with respect to the $ \sigma $-algebra of $ T $-invariant sets.\\\\
Their proof is an argument by contradiction. They obtain `bad trees' that witness the failure of the desired properties, and then `glue' the witnesses together into a single tree that occupies most of the complete tree behind $ x $, which will contradict the local-global bridge. There are no serious measurability concerns here, as everything is locally countable, using appropriate uniformization theorems.\\\\
We provide a generalization of their results by removing the countable-to-one assumption and prove an ergodic theorem for arbitrary measure-preserving transformations.
\begin{theorem}
    (Backward Ergodic Theorem for arbitrary measure preserving transformations) Let $ X $ be a standard probability space, let $ T:X\rightarrow X $ be a measure-preserving transformation. Then for every $ f\in L^1(X) $, and for almost every $x \in X $, $ A_{f,E_x} <\infty $ and the limit
    \[
    \lim_{m_x(E_x)\nearrow \infty} A_{f,E_x} 
    \]
    exists and equals $ \overline{f}= \mathbb{E}(f|\mathcal{B}_{\mathcal{I}}) $, the conditional expectation of $ f $ with respect to the $ \sigma $-algebra of $ T $-invariant subsets, and where $ E_x $ ranges over finite-height coherent trees rooted at $ x $.
\end{theorem}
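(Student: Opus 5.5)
We follow the Tserunyan--Zomback strategy for Theorem \ref{countable-to-one}, replacing counting measures on preimage sets by a measurable disintegration. First we fix the structure behind $x$. Disintegrating $\mu$ along $T$ (using $T_*\mu=\mu$ and Rokhlin's theorem) gives a Borel family of probability measures $(\mu_y)_{y\in X}$ with $\mu_y$ concentrated on $T^{-1}(y)$ and $\mu=\int \mu_y\,d\mu(y)$. Iterating this gives measures $m_x$ on the backward tree $\bigsqcup_{n\ge 0} T^{-n}(x)$, where level $n$ carries the composite kernel $\mu^{(n)}_x$. A \emph{coherent} finite-height tree $E_x$ is then a measurable subset of this tree that is closed toward the root, so $m_x(E_x)$ is the sum of its level masses, and we set $A_{f,E_x}=m_x(E_x)^{-1}\int_{E_x} f\,dm_x$. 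The first checks are that the map $(x,E_x)\mapsto A_{f,E_x}$ is measurable on a suitable standard Borel parametrization of coherent trees, built via the Vaes--Wouters Borel/measured field structures, and that $A_{f,E_x}<\infty$ a.e. The finiteness follows from $\int \int |f|\,d\mu^{(n)}_x\,d\mu(x)=\|f\|_1$, so each level integral is finite for a.e. $x$.

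Next comes the local-global bridge, which replaces the mass transport principle. For every $n$ and every Borel $g\ge 0$ we have $\int_X \int g\,d\mu^{(n)}_x\,d\mu(x)=\int g\,d\mu$. More generally, if $x\mapsto E_x$ is a measurable choice of coherent trees, then $\int_X \int_{E_x} g\,dm_x\,d\mu(x)=\int_X g(y)\,w(y)\,d\mu(y)$, where $w(y)$ counts the ancestors $T^k y$ whose tree $E_{T^ky}$ contains $y$. This identity plays the role of the cocycle identity in the countable case.

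The convergence proof then goes by contradiction, as in \cite{anushpaper}. We reduce to $f$ bounded, by an $L^1$-approximation together with a maximal inequality derived from the bridge applied to $|f-f'|$. We then reduce to the case where $\overline f$ is replaced by the invariant conditional expectation; this uses the fact that $T^{-1}$-invariance makes $\overline f$ constant on each backward tree. Suppose that on a set of positive measure, $\limsup A_{f,E_x}>\overline f+\epsilon$ along trees of mass tending to infinity. For each such $x$ we select a bad coherent tree $E_x$ of large mass with $A_{f,E_x}>\overline f+\epsilon$.

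This selection step is the main obstacle. In the countable-to-one case one selects among countably many finite trees. Here the space of coherent trees is uncountable and not obviously standard, so we apply the Jankov--von Neumann uniformization theorem to the analytic set of pairs (point, bad tree). This yields a $\mu$-measurable (universally measurable, hence a.e. Borel) selector $x\mapsto E_x$. Making this legitimate is exactly where the Borel field structure on the family $(\mu^{(n)}_x)$ is needed.

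Finally we glue the selected witnesses, proceeding greedily down the levels, into a single tree $F_x$ that covers all but an $\epsilon$-fraction of the $m_x$-mass of the full tree up to height $N$, for most $x$. Because of the mass constraints, the trees $E_y$ can be taken to have bounded height except on a small set. Summing the bad averages over the disjoint pieces gives $\int_{F_x} f\,dm_x\ge(\overline f+\epsilon/2)\,m_x(F_x)$. Integrating over $x$ and applying the bridge contradicts $\int f\,d\mu=\int\overline f\,d\mu$ restricted to invariant sets. The $\liminf$ case is symmetric.
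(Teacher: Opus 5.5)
Your outline matches the paper's architecture: disintegration kernels $\mu^i_x$, the Vaes--Wouters field structure, Jankov--von Neumann selection of bad trees, greedy gluing with bounded heights off a small set, and the complete-tree bridge. The gap is that it omits the step the tiling rests on: the a.e.\ $T$-invariance of $f^*(x)=\limsup_{m_x(E_x)\nearrow\infty}A_{f,E_x}$ (and of $f_*$). Your greedy construction covers all but an $\epsilon$-fraction of $\triangleright^N_T\cdot x$ only if $\mu^n_x$-almost every $y\in T^{-n}\{x\}$ has a bad witness $E_y$ to attach. That means the bad set $B=\{f^*>\overline f+\epsilon\}$ must satisfy $\mu^n_x(B)=1$ for a.e.\ $x\in B$. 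Your last sentence also restricts $\int f\,d\mu=\int\overline f\,d\mu$ to $B$, which presupposes that $B$ is invariant. The only invariance you actually establish is that of $\overline f$. In the countable-to-one setting, invariance of $f^*$ is nearly free: a bad tree at $x$ becomes a bad tree at $Tx$ by adding the root, since $x$ has positive $\rho_{Tx}$-weight. Here every single preimage is $\mu^1_{Tx}$-null, so that argument is unavailable.

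The paper supplies the missing step in Lemma \ref{importantforinvariance} and Proposition \ref{invariance}. Suppose $f^*(z)>f^*(y)+\eta$ on a set $A\subseteq T^{-1}\{y\}$ with $\mu^1_y(A)>0$. One shrinks $A$ so that witnesses of mass $>K$ and average $>f^*(y)+\eta/2$ have uniformly bounded length, selects them by Jankov--von Neumann, and glues them level by level (Lemma \ref{gluing}) into the tree $\{y\}\sqcup A\sqcup E^1\sqcup\cdots\sqcup E^n$ rooted at $y$. This tree has mass $1+\int_A m_z(E_z)\,d\mu^1_y(z)$, and its average exceeds $f^*(y)+\eta/4$ once $K$ is large, contradicting the definition of $f^*(y)$. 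Hence $f^*(Tz)\ge f^*(z)$ a.e., and measure preservation makes each $\{f^*\ge a\}$ invariant mod null. You need this, or an equivalent argument, before the tiling step.

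A secondary point concerns your reduction to bounded $f$, which invokes a maximal inequality ``derived from the bridge''. The bridge with multiplicities only gives $\int h\,w\,d\mu>\lambda\int w\,d\mu$ with $w$ uncontrolled. Getting $w\le 1$ requires the same disjoint tiling, and for the supremum version also $Y$-saturated witnesses as in the paper's maximal theorem. The paper avoids this by using $g=\min\{f^*/2,1\}$ and the set $C=f^{-1}[-M,\infty)$, together with absolute continuity of $\int(f-g)\,d\mu$.
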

\noindent The style of the proof is similar to the original presentation from Tserunyan and Zomback \cite{anushpaper}, but there are a few key differences. First, we use measure disintegration to play the role of the Radon-Nikodym cocycle. Second, and more importantly, it is more difficult to obtain the invariance and tiling originally discussed. It is not immediate why we can perform this gluing since there may now be uncountably many pieces. However, we will see that this can indeed be done, and we use the theory of Borel fields and fibered spaces discussed in \cite{borelfieldpaper} to justify this.\\\\
We also remove the countable-to-one assumption for their Backward Maximal Ergodic Theorem:
\begin{theorem}
    Let $ X $ and $ T $ be as above. Then for every $ f\in L^1(X,\mu) $, if we define $ f^\star  = \sup_{E_x\in \CT_x} A_{f,E_x} $ for each $ x\in X $. Then for any $ \lambda\in \mathbb{R} $, 
    \[
    \int_{\{f^\star>\lambda\}} f d\mu \geq \lambda \mu(\{f^\star>\lambda\})
    \]
\end{theorem}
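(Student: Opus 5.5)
We prove the backward maximal inequality for $f^\star$ by a covering ("tiling") argument. The model is the classical Garsia/Riesz proof of the maximal ergodic theorem, adapted to backward coherent trees. The key identity is the tree analogue of measure preservation. Let $E_x$ be a finite-height coherent tree rooted at $x$ with disintegration weights $m_x$. For measurable families $x\mapsto E_x$ and any $g\in L^1$,
\[
\int_X \int_{E_x} g\, dm_x\, d\mu(x) \;=\; \int_X g(y)\, \Big(\int_X \mathbf 1_{E_x}(y)\, dm_x \text{-weight}\Big)\, d\mu(y),
\]
where the inner term counts, with disintegration weight, how many roots $x$ have $y$ in their tree. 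If the family $\{E_x\}_{x\in B}$ "tiles" the relevant set, this multiplicity is exactly $\mathbf 1_B$. In the countable-to-one case this is the Radon–Nikodym cocycle identity. Here I will derive it from $T_*\mu=\mu$ and the disintegration $\mu=\int \mu_z\, d\mu(z)$ along the fibers $T^{-1}(z)$, applied level by level up the height of the tree.

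The steps, in order, are as follows. (1) Fix $\lambda$ and put $B=\{f^\star>\lambda\}$. First truncate: replace $f^\star$ by $f^\star_n$, the supremum over trees of height $\le n$. Then $B_n=\{f^\star_n>\lambda\}$ increases to $B$, and it suffices to prove the inequality for $B_n$ and apply dominated convergence. (2) On $B_n$, use Jankov–von Neumann uniformization to choose, $\mu$-measurably, a witnessing tree $E_x$ of height $\le n$ with $A_{f,E_x}>\lambda$, i.e.\ $\int_{E_x}(f-\lambda)\,dm_x>0$. This needs $\{(x,E):A_{f,E}>\lambda\}$ to be analytic in a standard Borel space of coherent trees, which is where the Vaes–Wouters Borel field structure on the fibered space of trees is used. (3) Glue the witnesses greedily, as in Tserunyan–Zomback, but by height in the backward direction. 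A point $y\in B_n$ with $T^k y=x$ is "covered" if it lies in $E_x$ for some root. We build a maximal disjoint subfamily by processing roots in order of decreasing position along forward orbits; the bounded height $n$ makes this a finite-step transfinite-free recursion. Every $y\in B_n$ is then either a root of a chosen tree or covered by one. Points covered but outside $B_n$ are allowed, and points of trees inside $B_n$ can be trimmed using coherence, as in Riesz's rising-sun lemma, so that $\int_{\text{chosen tree}}(f-\lambda)\ge 0$ still holds. (4) Integrate $\sum_{\text{chosen roots}}\int_{E_x}(f-\lambda)\,dm_x\ge 0$ and use the identity above to convert it into $\int_{B_n}(f-\lambda)\,d\mu\ge 0$ plus a nonnegative remainder term controlled by the rising-sun property. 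This yields the claim.

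The main obstacle is step (3) together with the measurability of the gluing. With uncountable fibers, "disjoint union of uncountably many trees" must be expressed as a measurable partition whose disintegration weights add correctly. A naive argument would invoke countable additivity over the pieces, and that is unavailable here. My first attempt would be to avoid explicit gluing altogether. I would instead prove a Garsia-type statement: define $F_n=\max(0,\sup_{\text{height}\le n}\int_{E_x}(f-\lambda)dm_x)$ and show $f-\lambda\ge F_n-\int_{T^{-1}\cdot}F_{n-1}\,d\mu_{\cdot}$ on $\{F_n>0\}$. This comes from the recursive structure of coherent trees, since a tree of height $n$ at $x$ is $x$ together with a measurable choice of subtrees of height $n-1$ over the fiber. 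Integrating then uses only $T_*\mu=\mu$ and the disintegration. Measurability of the recursive supremum again reduces to uniformization in the Borel field framework, and I expect this to be the delicate point.
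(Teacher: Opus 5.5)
Your Garsia-type route at the end is correct, and it is a genuinely different and much lighter proof than the paper's; make it the proof and drop the tiling plan.

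\textbf{Why the Garsia route works.} Write $\int_E g\,dm_x:=\sum_i\int_{E^i}g\,d\mu^i_x$ and $P_Tg(x)=\int g\,d\mu^1_x$. Take a.e.\ $x$ and any $E\in\CT_x$ with $\ell(E)\le n$. The identity $\mu^{i}_x=\int\mu^{i-1}_y\,d\mu^1_x(y)$ gives
\[
\int_E(f-\lambda)\,dm_x=(f(x)-\lambda)+\int_{E^1}\Big(\int_{E|_y}(f-\lambda)\,dm_y\Big)\,d\mu^1_x(y)\le (f(x)-\lambda)+P_TF_{n-1}(x).
\]
Here $E|_y=(\{y\},E^2,\dots,E^{\ell(E)})$, with $E^i$ read in $\malg{\mu^{i-1}_y}$ and truncated at its first null level. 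For $\mu^1_x$-a.e.\ $y\in E^1$ it lies in $\CT_y$, because disintegrating $\mu^{i+1}_x(E^{i+1}\setminus T^{-1}[E^i])=0$ gives coherence on a.e.\ fiber. The same coherence makes the levels over a.e.\ $y\notin E^1$ null. Taking the supremum over $E$ gives $f-\lambda\ge F_n-P_TF_{n-1}$ on $\{F_n>0\}$. Then
\[
\int_{\{F_n>0\}}(f-\lambda)\,d\mu\ \ge\ \int F_n\,d\mu-\int P_TF_{n-1}\,d\mu\ =\ \int F_n\,d\mu-\int F_{n-1}\,d\mu\ \ge\ 0 .
\]
This uses $F_n=0$ off $\{F_n>0\}$, $P_TF_{n-1}\ge 0$, $\int P_Tg\,d\mu=\int g\,d\mu$, and $F_n\ge F_{n-1}$. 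Integrability comes from $0\le F_n\le\sum_{i\le n}P_{T^i}\abs{f-\lambda}$. Since $\{F_n>0\}=\{f^\star_n>\lambda\}$, your step (1) finishes the proof.

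Only the \emph{restriction} direction of the recursion is used (a tree splits into subtrees over the fiber), never the converse gluing. So your worry about uniformization is unfounded. The set $\{F_n>\alpha\}$ is the projection of a Borel subset of $X\times\mathcal{Y}$, exactly as the paper argues for $f^\star$, hence universally measurable. That is all you need for $P_TF_{n-1}$ and the integral identity.

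\textbf{Comparison with the paper.} The paper proves minimal witnesses exist (by transfinite induction) and are $Y$-saturated. It selects saturated witnesses by Jankov--von Neumann and tiles each complete tree $\triangleright^N_T\cdot x$ greedily with the gluing lemma. It then transfers through the local--global bridge $\int g\,d\mu=\int A_{g,\triangleright^N_T\cdot x}\,d\mu(x)$, controlling uncovered levels by assuming $f$ bounded below. Your argument avoids uniformization, gluing, minimality and the bounded-below reduction. The paper's route has the merit of reusing the tiling machinery behind its pointwise backward ergodic theorem.

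\textbf{Gap in the tiling plan.} Your steps (3)--(4) fail as written, because a global greedy selection has no place to start. Whether $x$ is a chosen root depends on whether the tree of some $T^kx$, $1\le k\le n$, covers it. That in turn depends on whether $T^kx$ was chosen, and so on forward without end. The height bound limits each step of this dependence, not its length.

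Nor can a disjoint cover always be extracted from the selected witnesses. Take an irrational rotation with $f\equiv1$, $\lambda=0$, and suppose the uniformization picks $(\{x\},\{T^{-1}x\})$ for every $x$. A disjoint cover would need $R\sqcup T^{-1}R=X$. That forces $T^{-2}R=R$ with $\mu(R)=\tfrac12$, contradicting ergodicity of $T^2$.

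The paper's fix is to localize: tile inside each $\triangleright^N_T\cdot x$ from its root, then pass to $\mu$ via the local--global bridge. With your height truncation, the uncovered last $n$ levels cost at most $\tfrac{n}{N+1}\norm{f-\lambda}_1$. No rising-sun trimming is needed for covered points outside $\{f^\star_n>\lambda\}$. The trivial tree $(\{y\})$ gives $f^\star_n\ge f$, so $f\le\lambda$ at those points and they can only help.
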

\noindent The proof technique mimics the presentation from Tserunyan and Zomback. In their proof, they fix minimal (with respect to proper sub-trees) witnesses $ \tau_x $ for $ x $ for which $ f^\star(x)>\lambda $. The important observation for such witnesses is that any other $ y\in \tau_x $ must also have the property that $ f^\star(y)>\lambda $. They then use the map $ x\mapsto \tau_x $ for their tiling property and then the rest of the proof proceeds identically to their proof of their Backward Ergodic Theorem. \\\\
In our more general context, while we can prove that analogous minimal witnesses exist, we cannot necessarily fix such a mapping $ x\mapsto E_x $ of witnesses in a measurable way. However, we use the existence of such minimal witnesses to prove that we can measurably fix witnesses that satisfy the weaker condition of for almost-every $ y\in E_x $, $ f^\star(y)>\lambda $. After this, we can similarly use the same techniques from our general Backward Ergodic Theorem to finish the proof.

\section{Background and Preliminaries}
\noindent We define a \textbf{standard probability space} to be a triple $ (X,\mathcal{B},\mu) $ where $ (X,\mathcal{B}) $ is standard Borel and $ \mu $ is a complete Borel probability measure on $ X $. While we do require $ \mu $ to be complete, it will still be necessary to consider the original Borel sets, $ \mathcal{B} $. \\\\
A Borel measurable transformation, $ T:X\rightarrow X $ is ($\mu$)-\textbf{measure preserving} if for every $ A\in \mathcal{B} $, $ \mu(A) = \mu(T^{-1}[A]) $. We say a set $ A\in \mathcal{B} $ is ($T$)-\textbf{invariant} if $ \mu(A\triangle T^{-1}[A]) = 0 $. Furthermore, for $ n\geq 1 $, we use $ T^n:X\rightarrow X $ to denote the $ n $-fold composition of $ T $. So,
\[
T^n(x) = T\circ T\circ\ldots \circ T(x)
\]
where there are $ n $-many $ T $'s being applied.
\subsection{Measure Disintegration and Conditional Expectation} 
\begin{definition}
    Let $ (X,\mathcal{B},\mu) $ be a measure space and suppose $ \mathcal{D} $ is a sub-$\sigma$-algebra of $ \mathcal{B} $. If $ f\in L^2(X,\mathcal{B},\mu) $, then the \textbf{conditional expectation of $ f $ with respect to $ \mathcal{D} $} is the orthogonal projection of $ f $ to the subspace of $ L^2(X,\mathcal{D},\mu) $, and is denoted
    \[
    \mathbb{E}(f|\mathcal{D})
    \]
    The function $ \mathbb{E}(f|\mathcal{D}) $ is the unique, up to equality off a null set, $ \mathcal{D} $-measurable function for which for every $ A\in \mathcal{D} $,
    \[
    \int_A fd\mu = \int_A \mathbb{E}(f|\mathcal{D})d\mu 
    \]
    Note that conditional expectation can also be generalized to $ L^1 $ functions.
\end{definition}
\noindent We also rely heavily on the measure disintegration theorem, as discussed in \cite{Fremlin2013}, [452G], [452I], [452XG], and [452XL]. We provide the form we use below.
\begin{theorem}\label{disintegration}
    (Measure Disintegration) Let $ X $ be a standard probability space and let $ T:X\rightarrow X $ be a (Borel) measure-preserving transformation. Then there exists a family of (complete) probability measures, $ \{\mu_x:x\in X\} $, each supported on $ T^{-1}\{x\} $ and defined on all of $ \mathcal{B}$, so that for every Borel set $ E $, the function $ x\mapsto \mu_x(E) $ is integrable and
    \[
    \int \mu_x(E)d\mu(x) = \mu(E)
    \]
    Furthermore, if $ \{\nu^x:x\in \mathbb{X}\} $ is another family of probability measures for which $ \nu_x $ is supported on $ T^{-1}\{x\} $ and $ \int \nu_x(E)d\mu(x) = \mu(E) $ for every Borel set $ E $, then for almost every $ x $, $ \mu_x = \nu_x $. 
\end{theorem}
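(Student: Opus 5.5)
The plan is to derive the statement from the general disintegration theorem cited from Fremlin, applied to the factor map $T:(X,\mathcal{B},\mu)\to(X,\mathcal{B},\mu)$. Since $T$ is measure preserving, the pushforward $T_*\mu$ equals $\mu$. So we are disintegrating $\mu$ over the map $T$ with respect to its own image measure.

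\textbf{Existence.} Because $(X,\mathcal{B})$ is standard Borel, the Borel $\sigma$-algebra is countably generated and $\mu$ is a Radon measure for some Polish topology. The disintegration theorem (Fremlin 452I, 452G) then gives a family $\{\mu_x\}_{x\in X}$ of Borel probability measures with the following properties:
\begin{itemize}
\item $x\mapsto\mu_x(E)$ is $\mu$-measurable for each $E\in\mathcal{B}$;
\item $\int\mu_x(E)\,d(T_*\mu)(x)=\mu(E)$.
\end{itemize}
Integrability is automatic, since $0\le\mu_x(E)\le 1$. Substituting $T_*\mu=\mu$ yields the integral identity.

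\textbf{Support on fibers.} Fix a countable generating algebra $\{B_n\}$ of $\mathcal{B}$. For each $n$, apply the identity to $E=T^{-1}[B_n]\cap A$ with $A$ ranging over sets of the form $T^{-1}[B_m]$. This shows that $\mu_x(T^{-1}[B_n])=1_{B_n}(x)$ for $\mu$-a.e. $x$; this is 452G's ``consistency'' condition. Intersecting over the countably many $n$, we get for a.e. $x$
\[
\mu_x\bigl(T^{-1}\{x\}\bigr)=\mu_x\Bigl(\bigcap_{n:\,x\in B_n}T^{-1}[B_n]\cap\bigcap_{n:\,x\notin B_n}T^{-1}[X\setminus B_n]\Bigr)=1 .
\]
On the exceptional null set, redefine $\mu_x$ to be a Dirac mass at some point of $T^{-1}\{x\}$ when that fiber is nonempty. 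The set of $x$ with empty fiber is $X\setminus T[X]$. This set is coanalytic, hence $\mu$-measurable, and it is null because $\mu(T[X])\ge\mu(T^{-1}[T[X]])=1$. On it we may take $\mu_x$ arbitrary; this is harmless since it is a null set, where ``supported'' can only be required almost everywhere. Measurability of $x\mapsto\mu_x(E)$ is preserved because we only altered the family on a null set and $\mu$ is complete.

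Finally, replace each $\mu_x$ by its completion. This does not change values on $\mathcal{B}$.

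\textbf{Uniqueness.} This is the step requiring care. The identity $\int\nu_x(E)\,d\mu=\mu(E)$ alone does not determine $\nu_x$; the support condition must be used. Given $E\in\mathcal{B}$ and $B\in\mathcal{B}$, support on $T^{-1}\{x\}$ gives
\[
\nu_x\bigl(E\cap T^{-1}[B]\bigr)=1_B(x)\,\nu_x(E).
\]
Hence
\[
\int_B\nu_x(E)\,d\mu=\mu\bigl(E\cap T^{-1}[B]\bigr)=\int_B\mu_x(E)\,d\mu
\]
for every $B$. Therefore $\nu_x(E)=\mu_x(E)$ for a.e. $x$, for each fixed $E$. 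Apply this to the countable generating algebra $\{B_n\}$ and discard a null set. The $\pi$–$\lambda$ theorem then gives $\nu_x=\mu_x$ on $\mathcal{B}$, hence also on the completions, for a.e. $x$.

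The main obstacle is the support step: showing that $\mu_x$ is concentrated on $T^{-1}\{x\}$ rather than merely on $T^{-1}[B]$ for each $B\ni x$. The countable-generation argument above handles it.
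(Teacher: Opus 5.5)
Your overall route, reducing to Fremlin's disintegration theorem, is the paper's route: the paper gives no argument beyond citing [452G], [452I], [452XG], [452XL]. Your uniqueness paragraph is correct and supplies something the paper omits. However, the step you yourself call the main obstacle fails as written.

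In the support step you take from Fremlin only measurability and the bare identity $\int\mu_x(E)\,d\mu(x)=\mu(E)$. You then claim that applying it to $E=T^{-1}[B_n]\cap T^{-1}[B_m]$ yields $\mu_x(T^{-1}[B_n])=1_{B_n}(x)$ for a.e.\ $x$. It does not. The identity only gives $\int_X\mu_x(T^{-1}[B_n\cap B_m])\,d\mu(x)=\mu(B_n\cap B_m)$, an integral over all $x$. You would need the localized version $\int_{B_m}\mu_x(T^{-1}[B_n])\,d\mu(x)=\mu(B_n\cap B_m)$.

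The constant family $\mu_x:=\mu$ is a counterexample. It satisfies $\int\mu_x(E)\,d\mu(x)=\mu(E)$ for every Borel $E$, yet $\mu_x(T^{-1}[B_n])=\mu(B_n)$, which is not $1_{B_n}(x)$ unless $\mu(B_n)\in\{0,1\}$. You make exactly this observation at the start of your uniqueness paragraph: the bare identity does not pin down the family. So it cannot force concentration on fibers either.

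The missing ingredient is consistency with $T$, namely $\int_B\mu_x(E)\,d\mu(x)=\mu(E\cap T^{-1}[B])$ for all Borel $B,E$. This has to come from the construction, not from the bare identity. The content of the existence theorem you cite is exactly that a disintegration \emph{consistent} with the map exists (here even strongly consistent), since a disintegration in the bare sense exists trivially. You should cite that conclusion rather than try to derive it.

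Alternatively, disintegrate $\lambda:=(T,\mathrm{id})_*\mu$ on $X\times X$ over its first marginal $T_*\mu=\mu$. This gives $\lambda(B\times E)=\int_B\mu_x(E)\,d\mu(x)$. Since $\lambda$ is carried by the Borel set $\{(x,z):Tz=x\}$, it also gives $\mu_x(T^{-1}\{x\})=1$ for a.e.\ $x$ directly.

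With consistency in hand, your separating-family argument correctly upgrades it to concentration on $T^{-1}\{x\}$. The rest then goes through:
\begin{itemize}
\item modifying on a null set using completeness of $\mu$;
\item handling the null set $X\setminus T[X]$, on which the statement cannot hold literally;
\item passing to completions;
\item uniqueness via $\nu_x(E\cap T^{-1}[B])=1_B(x)\,\nu_x(E)$.
\end{itemize}
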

\noindent As an immediate observation, given a Borel set $ E $, the map $ X\rightarrow \mathbb{R}, x\mapsto \mu_x(E) $, is Borel measurable almost everywhere - that is, there is a Borel measurable function $ \eta $ that agrees with $ x\mapsto \mu_x(E) $ almost everywhere.\\\\
Furthermore, if $ f\in L^1(X,\mathcal{B},\mu) $, then for almost every $ x\in X $, $ f\in L^1(X,\mathcal{B},\mu_x) $, and the function $ x\mapsto\int f(y)d\mu_x(y) $ is integrable with
\begin{equation}
    \int \int f(y)d\mu_x(y)d\mu(x) = \int f(x)d\mu(x)
\end{equation}
\noindent In addition, if $ \mu $ in $ (X,\mathcal{B},\mu) $ is not complete, there is still a disintegration, and the completion of the measures that form the disintegration will be the disintegration of the completion of $ \mu $.
\subsection{Uniformization}
\begin{definition}
    If $ R\subseteq X\times Y $ is a relation. Then a \textbf{uniformization} of $ R $ is a function $ f:\proj_X(R) \rightarrow Y $ such that $ (x,f(x))\in R $ for each $ x\in \proj_X(R) $. 
\end{definition}
\noindent The relevant uniformization theorem used in the countable-to-one case was the Lusin-Novikov uniformization theorem. We will instead use the Jankov, von Neumann uniformization theorem in \cite{Kechris95} [18.1]
\begin{theorem}
    \label{JVN} (Jankov, von Neuman) Let $ X $ and $ Y $ be standard Borel. If $ P\subseteq X\times Y $ is analytic. Then there exists a $ \sigma(\Sigma^1_1) $-measurable uniformization of $ P $.
\end{theorem}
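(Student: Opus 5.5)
The statement is the Jankov--von Neumann uniformization theorem. I would prove it by reducing to the case $Y = \mathcal{N} = \omega^\omega$ and a closed set, and then choosing the leftmost branch. The proof has three steps.

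\emph{Reduction.} Since $P \subseteq X\times Y$ is analytic, there is a closed set $F \subseteq X \times Y \times \mathcal{N}$ with $P = \proj_{X\times Y}(F)$. Fix a continuous surjection $g:\mathcal{N}\to Y$; if $Y$ is uncountable, use a Borel isomorphism $Y\cong\mathcal{N}$ instead, and handle countable $Y$ similarly. Then there is a closed set $C \subseteq X\times\mathcal{N}$ such that
\begin{align*}
(x,y)\in P &\iff \exists z\ (x,z)\in C \text{ and } y = \pi_1(z).
\end{align*}
To build $C$, let $z$ code a pair $(y',w)$, where $y'\in\mathcal{N}$ codes $y$ and $w$ witnesses membership in $F$. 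After fixing a Polish topology on $X$, the condition on $(x,z)$ is closed. Moreover, $\proj_X(C)=\proj_X(P)$. So it suffices to uniformize $C$ by a $\sigma(\Sigma^1_1)$-measurable $h$, and then set $f = \pi_1\circ h$ followed by the decoding map. The decoding map is Borel, and a Borel function of a $\sigma(\Sigma^1_1)$-measurable function is again $\sigma(\Sigma^1_1)$-measurable.

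\emph{Leftmost branch.} For $x\in A:=\proj_X(C)$, the section $C_x$ is a nonempty closed subset of $\mathcal{N}$. I would let $h(x)$ be its lexicographically least element, defined by
\begin{align*}
h(x)(n) &= \text{least } k \text{ such that } C_x \cap N_{(h(x)(0),\dots,h(x)(n-1),k)} \neq \emptyset,
\end{align*}
where $N_s$ is the basic clopen set of sequences extending $s$. At each stage some $k$ works, because the previous neighborhood met $C_x$. Because $C_x$ is closed, the limit point $h(x)$ lies in $C_x$: every neighborhood of $h(x)$ meets $C_x$. So $h$ is a uniformization of $C$.

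\emph{Measurability.} It is enough to show that $h^{-1}(N_s)$ lies in $\sigma(\Sigma^1_1)$ for every finite sequence $s$. Write $s = (s_0,\dots,s_{n-1})$. Then $h(x)$ extends $s$ exactly when, for each $i<n$, two things hold. First, $C_x\cap N_{s\restriction i ^\frown s_i}\neq\emptyset$, which is a $\Sigma^1_1$ condition on $x$, being a projection of a closed set. Second, $C_x\cap N_{s\restriction i ^\frown k}=\emptyset$ for every $k<s_i$, which is a $\Pi^1_1$ condition, hence the complement of a $\Sigma^1_1$ set. So $h^{-1}(N_s)$ is a finite Boolean combination of analytic sets and belongs to $\sigma(\Sigma^1_1)$. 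The sets $N_s$ generate the Borel sets of $\mathcal{N}$, so $h$ is $\sigma(\Sigma^1_1)$-measurable.

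I expect the main obstacle to be the bookkeeping in the reduction: arranging a single closed set $C$ that encodes both the $Y$-coordinate and the witness, while keeping the decoding map Borel. The leftmost-branch argument itself is routine once $C$ is closed.
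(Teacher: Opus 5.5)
Your proof is correct. The paper gives no proof of its own; it only cites \cite{Kechris95} [18.1], and the standard proof there is essentially yours: reduce to a closed $C\subseteq X\times\mathcal{N}$ with the same projection, take the leftmost branch of each section, and note that preimages of basic open sets are finite Boolean combinations of analytic sets. The hedged case split on $Y$ is unnecessary, since every nonempty Polish $Y$ is a continuous image of $\mathcal{N}$; alternatively, take a continuous surjection $g\colon\mathcal{N}\to P$ and the closed set $\{(x,z):\proj_X(g(z))=x\}$ directly.
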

\noindent Furthermore, any $ \sigma(\Sigma^1_1) $-measurable set is universally measurable - that is, it is measurable with respect to any complete Borel probability measure (see \cite{Kechris95} [21.10]).
\subsection{Relevant Definitions}
Let $ (X,\mathcal{B},\mu) $ be a standard measure space, and $ T:X\rightarrow X $ a Borel measure-preserving transformation. We make no assumptions on the cardinality of $ T^{-1}\{x\} $.
\begin{definition}
    For each $ i\in \Zplus $, $ T^i $ is a Borel measure-preserving transformation. We apply the Measure Disintegration theorem \ref{disintegration} to $ T^i $, and we let $ \{\mu_x^i:x\in X\} $ be the resulting family of probability measures that satisfy the properties described in the theorem. \\\\
    As such, for each $ x $, we let $ \mu_x^i $ be \textbf{the disintegration of $ \mu $ over the measure-preserving transformation $ T^i $ at $ x $}.
\end{definition}
\noindent For convenience of notation, we let $ \mu^0_x $ be $ \delta_x $ - the dirac mass at $ x $. We have the following key observation:
\begin{proposition}
    For almost every $ x\in X $ and every $ A\in \mathcal{B} $, 
    \begin{equation}
        \label{eq:1}
    \mu^2_x (A) = \int_X \mu^1_y (A)d\mu^1_x(y)
    \end{equation}
\end{proposition}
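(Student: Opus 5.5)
The plan is to use the uniqueness clause of the Measure Disintegration theorem \ref{disintegration} applied to $T^2$. Define, for each $x\in X$, the set function
\[
\nu_x(A) = \int_X \mu^1_y(A)\,d\mu^1_x(y), \qquad A\in\mathcal{B}.
\]
We will show that, for almost every $x$, $\nu_x$ is a probability measure supported on $T^{-2}\{x\}$, and that $\int \nu_x(E)\,d\mu(x)=\mu(E)$ for every Borel $E$. Uniqueness then gives $\nu_x=\mu^2_x$ for almost every $x$. This is exactly \eqref{eq:1}, holding for all $A$ simultaneously off a single null set, because uniqueness asserts equality of measures.

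\textbf{Step 1: well-definedness.} We first need $y\mapsto \mu^1_y(A)$ to be $\mu^1_x$-measurable for almost every $x$. By the observation after Theorem \ref{disintegration}, this map agrees with a Borel function $\eta_A$ off a $\mu$-null set $N_A$. Applying the disintegration identity to $N_A$ (or to a Borel null superset of it) gives $\int\mu^1_x(N_A)\,d\mu(x)=0$, so $\mu^1_x(N_A)=0$ for a.e. $x$. Hence $\nu_x(A)=\int \eta_A\,d\mu^1_x$ is defined for a.e. $x$, and by completeness of $\mu^1_x$ the integrand is measurable.

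This is the main obstacle: the exceptional null set depends on $A$, and there are uncountably many $A$. To handle it, I would define $\nu_x$ only on a countable algebra $\mathcal{A}$ generating $\mathcal{B}$, which exists since $X$ is standard Borel. Off a single null set, $\nu_x$ is then a finitely additive probability on $\mathcal{A}$. Countable additivity on $\mathcal{A}$ follows by monotone convergence inside the integral, with the relevant null sets again countable in number. Carathéodory then extends $\nu_x$ to a measure on $\mathcal{B}$. A monotone class argument, applied for a.e. $x$, shows the extension agrees with the integral formula for every Borel $A$. Alternatively, one can fix measurable versions via a Borel disintegration kernel to begin with.

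\textbf{Step 2: support.} Fix a good $x$. The measure $\mu^1_x$ is concentrated on $T^{-1}\{x\}$, and for $\mu$-a.e. $y$ the measure $\mu^1_y$ is concentrated on $T^{-1}\{y\}\subseteq T^{-2}\{x\}$ whenever $T(y)=x$. The set of bad $y$ is $\mu$-null, hence $\mu^1_x$-null for a.e. $x$ by the argument in Step 1. Therefore
\[
\nu_x\bigl(T^{-2}\{x\}\bigr)=\int \mu^1_y\bigl(T^{-2}\{x\}\bigr)\,d\mu^1_x(y)=1.
\]

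\textbf{Step 3: integral identity.} For Borel $E$, apply the integrated form of the disintegration identity (the equation $\int\int f\,d\mu_x\,d\mu=\int f\,d\mu$) to the bounded function $f(y)=\mu^1_y(E)$, using its Borel version $\eta_E$. This gives
\[
\int \nu_x(E)\,d\mu(x)=\int \mu^1_y(E)\,d\mu(y)=\mu(E).
\]
The last equality is the defining property of the disintegration over $T$. The uniqueness clause of Theorem \ref{disintegration} for $T^2$ now yields $\nu_x=\mu^2_x$ for almost every $x$, which completes the proof.
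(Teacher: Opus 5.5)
Your proof is correct and follows the same route as the paper: define $\nu_x(A)=\int_X\mu^1_y(A)\,d\mu^1_x(y)$, show it is concentrated on $T^{-2}\{x\}$ and that $\int\nu_x(E)\,d\mu(x)=\mu(E)$ by the integrated disintegration identity, then invoke uniqueness of the disintegration of $T^2$. Your Step 1 also makes $y\mapsto\mu^1_y(A)$ measurable for $\mu^1_x$ off a single null set (countable generating algebra plus a monotone class argument), a point the paper skips with ``it's clear that the right-hand-side is indeed a measure.''
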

\begin{proof}
    It's clear that the right-hand-side is indeed a measure - denote it $ \nu_x $. To show it equals $ \mu^2_x $, by uniqueness of measure disintegration, it suffices to show that it is also a disintegration of $ T^2 $.\\\\
    We must how two things - first that $ \nu_x $ is supported on $ T^{-2}\{x\} $, and second, for each $ E\in \mathcal{B} $, that $ x\mapsto \nu_x(E) $ is integrable and
    \[
    \int \nu_x(E) d\mu(x) = \mu(E)
    \]
    For the first, Let $ B = X\setminus T^{-2}\{x\} $. Then
    \[
    \nu_x(B) = \int_X \mu^1_y (B) d\mu^1_x (y) = 0
    \]
    since $ \mu^1_x $ is supported on $ T^{-1}\{x\} $, $ y\in T^{-1}\{x\} $, and $ \mu^1_y $ is supported on $ T^{-1}\{y\}\subseteq T^{-2}\{x\} $.\\\\
    For the second, let $ E\in \mathcal{B} $. Since $ \mu_y^1 $ are measure disintegrations, the function $ y\mapsto \mu_y^1(E) $ is integrable. As such, by the disintegration theorem, the function $ x\mapsto \int_X \mu_y^1(E)d\mu_x^1(y) = \nu_x(E) $ is also integrable, and we see
    \begin{align*}
        \int_X \nu_x(E)d\mu(x) &= \int_X \int_X \mu^1_y(E)d\mu^1_x(y)d\mu(x)\\
        &= \int_X\mu_x^1(E)d\mu(x)\\
        &= \mu(E)
    \end{align*}
    as desired.
\end{proof}
\noindent By an inductive argument, we obtain
\begin{proposition}\label{coherencelemma}
    For almost every $ x\in X $ and every $ A\in \mathcal{B} $, and for every $ i\geq 1 $,
    \[
    \mu^{i+1}_x(A) = \int_X \mu^i_y(A)d\mu^1_x(y)
    \]
\end{proposition}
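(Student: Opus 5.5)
I will argue by induction on $i$, mimicking the proof of the preceding proposition (the case $i=1$). For fixed $i\geq 1$, assume that for almost every $x$ and every $A\in\mathcal{B}$ the identity $\mu^{i}_x(A)=\int_X \mu^{i-1}_y(A)\,d\mu^1_x(y)$ holds; for $i=1$ this is trivial since $\mu^0_y=\delta_y$ and $\mu^1_x$ is a probability measure. Define
\[
\nu_x(A) = \int_X \mu^i_y(A)\, d\mu^1_x(y).
\]
This is a probability measure by monotone convergence, since each $\mu^i_y$ is one. By the uniqueness clause of Theorem \ref{disintegration} applied to $T^{i+1}$, it suffices to check two things: that $\nu_x$ is supported on $T^{-(i+1)}\{x\}$ for almost every $x$, and that $\int \nu_x(E)\,d\mu(x)=\mu(E)$ for every Borel $E$.

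For the support, let $B=X\setminus T^{-(i+1)}\{x\}$. For $\mu^1_x$-almost every $y$ we have $y\in T^{-1}\{x\}$, so $T^{-i}\{y\}\subseteq T^{-(i+1)}\{x\}$. Hence $\mu^i_y(B)\le \mu^i_y(X\setminus T^{-i}\{y\})=0$, and integrating gives $\nu_x(B)=0$. One caveat is that the support statement for $\mu^i_y$ may only hold for $y$ off a $\mu$-null set $N$. To handle this, note that $\mu^1_x(N)=0$ for almost every $x$, because $\int\mu^1_x(N)\,d\mu(x)=\mu(N)=0$.

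For the integral identity, fix Borel $E$. The map $y\mapsto \mu^i_y(E)$ is integrable (it agrees a.e.\ with a Borel function and is bounded by $1$). So the stated consequence of Theorem \ref{disintegration} for $T$, namely $\int\int g\,d\mu^1_x\,d\mu=\int g\,d\mu$, applied to $g(y)=\mu^i_y(E)$, gives
\[
\int_X \nu_x(E)\,d\mu(x)=\int_X \mu^i_y(E)\,d\mu(y)=\mu(E),
\]
by the disintegration property of $\{\mu^i_y\}$ for $T^i$. Uniqueness then gives $\nu_x=\mu^{i+1}_x$ for a.e.\ $x$. Since only countably many $i$ occur, the exceptional null sets can be united, yielding one conull set that works for every $i\geq 1$.

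The main obstacle I expect is the measure-theoretic bookkeeping. First, $x\mapsto\nu_x(E)$ must be measurable and integrable, which follows from the integral formula once $g$ is replaced by a Borel version; changing $g$ on a $\mu$-null set changes $\nu_x(E)$ only for a null set of $x$, again by $\int\mu^1_x(N)\,d\mu=\mu(N)$. Second, "for every $A$" must hold simultaneously for almost every $x$. Uniqueness of disintegrations already provides this, since it gives equality of the measures $\nu_x=\mu^{i+1}_x$ and not merely setwise equality. Notably, the inductive hypothesis is not actually needed beyond the definition of $\mu^i$ as a disintegration of $T^i$. The induction is still the natural framing and makes the argument a direct repetition of the $i=1$ proof.
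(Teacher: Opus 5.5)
Your proposal is correct and matches the paper's argument. Like the paper, you define $\nu_x(A)=\int_X\mu^i_y(A)\,d\mu^1_x(y)$, check that it is supported on $T^{-(i+1)}\{x\}$ and that $\int\nu_x(E)\,d\mu(x)=\mu(E)$, and conclude by uniqueness of disintegration. As you observed, the paper's induction is likewise only a framing, since its inductive step never uses the hypothesis, and your treatment of the exceptional null sets is more careful than the paper's.
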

\begin{proof}
    The prior proposition is the base case. Suppose it is true for $ i\geq 1 $. We will show
    \[
    \mu^{i+2}_x(A) = \int_X \mu^{i+1}_y(A)d\mu^1_x(y)
    \]
    by again showing the right-hand-side is a disintegration for $ T^{i+2} $. Let $ \nu_x $ denote the right-hand-side. \\\\
    An identical argument as above shows that $\nu_x$ is supported on $ T^{-(i+2)}\{x\} $. \\\\
    Furthermore, if $ E\in \mathcal{B} $, the function $ x\mapsto \mu_x^{i+1}(E) $ is again integrable. Hence,
    \begin{align*}
        \int_X \nu_x(E) d\mu(x) &= \int_X \int_X \mu_y^{i+1}(E)d\mu_x^1 d\mu(x) \\
        &= \int_X \mu_x^{i+1}(E)d\mu(x)\\
        &= \mu(E)
    \end{align*}
    as desired
\end{proof}
\noindent As an immediate consequence, we see
\begin{equation}
    \mu^i_x(A) = \int \int \ldots \int \mu^1_{y_{i-1}}(A)d\mu^1_{y_{i-2}}(y_i)\ldots d\mu^1_{x}(y_1)
\end{equation}
\begin{definition}
    For each $ x\in X $, we consider the set of \textbf{coherent trees rooted at $ x $}, denoted by $ \CT_x $. More specifically, we consider sequences $ E_x = (E^0, E^1,\ldots, E^n) $ where the following are true:
\begin{enumerate}
    \item $ E^0 = \{x\} $, and for each $ i\leq n$, $ E_i \in \malg{\mu_x^i} $ - each level is measurable.
    \item For each $ i \leq n $, $ \mu_x^i(E^i) > 0 $ - each level has positive mass
    \item For $ 0\leq i < n $, $ \mu_x^{i+1}(E^{i+1}\setminus T^{-1}[E^i]) = 0 $ - the levels are coherent; essentially, `$ E^{i+1}\subseteq T^{-1}[E^i] $'.
\end{enumerate}
\end{definition}
\noindent For the above $ E_x $, we say the \textbf{length} of $ E_x $, $ \ell(E_x)$, is $ n $. Then, given a tree $ E_x $, say of length $ n $, we define the \textbf{mass of $ E_x $ at $ x $} as
\[
m_x(E_x):= \sum_{i=0}^n \mu^i_x(E^i)
\]
As an abuse of notation, given some tree $ E_x $ of length $ n $, we may go between $ E_x $ and the disjoint union of its levels, $ \bigsqcup_{i=0}^n E^i $ to refer to it. As such, we write
\[
\int_{\bigsqcup_{i=0}^n E^i} fd\mu = \sum_{i=0}^n \int_{E^i} f d\mu_x^i
\]
\begin{definition}
    We let $ \triangleright_T^N\cdot x $ denote the \textbf{complete tree} of length $ N $. Observe that
    \[
    m_x(\triangleright_T^N\cdot x) = N+1 
    \]
\end{definition}
\noindent Then given $ f\in L^1(X,\mathcal{B},\mu) $, we can define the \textbf{average of $ f $ over $ E_x $} as
\[
A_{f,E_x} = \frac{\sum_{i=0}^{\ell(E_x)} \int_{E^i}f(z)d\mu^i_x(z)}{m_x(E_x)}
\]
Then, we let
\[
f^*(x) = \limsup_{m_x(E_x)\nearrow \infty} A_{f,E_x},\quad f_*(x) = \liminf_{m_x(E_x)\nearrow \infty} A_{f,E_x}
\]
where $ E_x $ varies over $ \CT_x $.\\\\
Essentially, $ f^* $ and $ f_* $ are the upper and lower estimates of the average value of $ f $ behind $ x $.
\section{Borel Fields and Measurability}
As mentioned in the previous section, we consider the \textbf{ coherent trees rooted at $  x$}. Our goal is, given a point $ x $ and many trees rooted in its preimage, $ T^{-1}\{x\} $, to be able to amalgamate potentially uncountably many trees into a single tree rooted at $ x $. To demonstrate that this can be done in a measurable way while keeping track of our many disintegrations, we will appeal to the fibered and measured Borel field structure discussed in \cite{borelfieldpaper}. Again, we work with a fixed integrable function $ f $ on $ X$.
\begin{definition}
    Let $ (X,\mathcal{B},\mu) $ be a measure space. The \textbf{measure algebra for $ \mu $}, $\malg{\mu} $, is a metric space consisting of $ \mathcal{B}/\sim_\mu $ (equivalence classes mod null) with metric
    \[
    d(A,B) = \mu(A\triangle B)
    \]
\end{definition}
\begin{lemma}\label{uniformly dense}
    Let $ (X,\mathcal{B}) $ be standard Borel. Then there exists $ \mathcal{C}\subseteq \mathcal{B} $ countable such that for every Borel probability measure $ \mu $ on $ X $, $ \mathcal{C} $ is dense in $ \malg{\mu} $. 
\end{lemma}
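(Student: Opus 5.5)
Since $(X,\mathcal{B})$ is standard Borel, I will first reduce to the case where $X$ is a compact metric space, for instance the Cantor space $2^{\N}$ or $[0,1]$. Fix a Borel isomorphism $\varphi : X\to Z$ with $Z$ compact metrizable; it induces isometries $\malg{\mu}\cong\malg{\varphi_*\mu}$ and carries countable families of Borel sets to countable families of Borel sets. So it suffices to produce $\mathcal{C}$ for $Z$ and pull it back. (If $X$ is countable the statement is trivial: take $\mathcal{C}$ to be the finite subsets of $X$, and use countable additivity.) For definiteness, take $Z=2^{\N}$.

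On $2^{\N}$ I take $\mathcal{C}$ to be the Boolean algebra generated by the basic clopen cylinders $N_s=\{z: s\subseteq z\}$, $s\in 2^{<\N}$. This is the collection of clopen sets, i.e.\ finite unions of cylinders, and it is countable. It generates the Borel $\sigma$-algebra, since the cylinders form a countable base for the topology.

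The key step is the standard approximation lemma. If $\mathcal{A}$ is an algebra generating a $\sigma$-algebra $\Sigma$, then for every finite measure $\mu$ on $\Sigma$ and every $B\in\Sigma$ and $\varepsilon>0$ there is some $A\in\mathcal{A}$ with $\mu(A\triangle B)<\varepsilon$. I will prove this by showing that the family $\mathcal{G}$ of sets $B$ that are $\mu$-approximable by $\mathcal{A}$ is a $\sigma$-algebra containing $\mathcal{A}$.
\begin{enumerate}
\item Complements: $A^c\triangle B^c=A\triangle B$.
\item Countable unions: given $B=\bigcup_n B_n$, choose $k$ with $\mu\bigl(B\setminus\bigcup_{n\le k}B_n\bigr)<\varepsilon/2$, approximate each $B_n$ for $n\le k$ within $\varepsilon/2^{n+2}$, and use $\bigl(\bigcup A_n\bigr)\triangle\bigl(\bigcup B_n\bigr)\subseteq\bigcup(A_n\triangle B_n)$.
\end{enumerate}
Hence $\mathcal{G}\supseteq\Sigma$. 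The essential point is that this argument works for each measure $\mu$ separately while $\mathcal{A}$ is fixed in advance, so the same countable $\mathcal{C}$ serves for all Borel probability measures simultaneously.

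Finally I will address the completion. The paper's measures are complete, so $\malg{\mu}$ may contain classes of $\mu$-measurable but non-Borel sets. Every such set differs from a Borel set by a $\mu$-null set, so its class in $\malg{\mu}$ equals the class of a Borel set, and density follows.

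I expect no step to be a real obstacle. The only point needing care is uniformity in $\mu$, and the algebra-approximation argument settles it because it depends only on $\mathcal{C}$ generating $\mathcal{B}$ as an algebra, not on any property of $\mu$.
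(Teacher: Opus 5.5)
Your proof is correct, but it takes a different route from the paper. The paper keeps a compatible Polish topology on $X$ and takes $\mathcal{C}$ to be the finite unions of sets from a countable base $\mathcal{U}$. It then uses outer regularity of Borel probability measures on Polish spaces: given Borel $S$, it picks an open $U\supseteq S$ with $\mu(U\setminus S)<\varepsilon/2$. It writes $U$ as a countable union of basic open sets and uses continuity from below to get a finite union $C\subseteq U$ with $\mu(U\setminus C)<\varepsilon/2$.

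You instead use the purely measure-theoretic good-sets argument: the sets that are $\mu$-approximable by a fixed countable algebra form a $\sigma$-algebra, so they contain everything the algebra generates. Your version needs no regularity theorem and no topology. It uses only that $\mathcal{B}$ is generated by a countable algebra, and it makes uniformity in $\mu$ transparent.

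For the same reason, your reduction to $2^{\N}$ via the Borel isomorphism theorem is unnecessary. So is the separate treatment of countable $X$. You could run the argument directly on the algebra generated by a countable base of any compatible Polish topology on $X$.

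A minor side benefit is that your $\mathcal{C}$ is closed under finite Boolean operations, which the paper's family of finite unions of basic open sets is not. The paper's proof is shorter, but it delegates the real work to outer regularity, which is itself usually proved by a good-sets argument much like yours.
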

\begin{proof}
    As $ X $ is standard Borel, we have a compatible Polish topology. In particular, let $ \mathcal{U} $ be a countable base for $ X $, and let $ \mathcal{C} $ be the collection of all finite unions of $ \mathcal{U} $. We claim that $ \mathcal{C} $ is the witnessing countable collection. It is clearly countable. Let $ \mu $ be a Borel probability measure on $ X $. Then, as $ X $ is Polish, $ \mu $ is Radon. In particular, $ \mu $ is outer regular, that is, for every $ S\in \mathcal{B} $,
    \[
    \mu(S) = \inf\{\mu(U):S\subseteq U\}
    \]
    Let $ \varepsilon > 0 $. Then let $ U $ be open such that $ S\subseteq U $ and $ \mu(U\setminus S)<\frac{\varepsilon}{2} $. As $ U $ is open, it is a countable union of elements from $ \mathcal{U} $. In particular, by the continuity of measures from below, there exists $ C\in \mathcal{C} $ such that $ C\subseteq U $ and
    \[
    \mu(U\setminus C) < \frac{\varepsilon}{2}
    \]
    We then have
    \[
    \mu(S\triangle C)<\varepsilon
    \]
    as desired.
\end{proof}
\noindent We will call such a family obtained in \ref{uniformly dense}, \textbf{uniformly dense}\\\\
\noindent By measure disintegration, for each $ n,m \in \mathbb{N} $, and  every $ x $, the function $ x\mapsto \mu_x^n(C_m) $ is Borel. 
\noindent We will now place a standard Borel structure on 
\[
V_i:=\bigsqcup_{x\in X} \malg{\mu_x^i} 
\]
Toward this endeavor, we will use the Borel field and the measured field structure introduced in \cite{borelfieldpaper}. We will reuse their notation of
\[
V_i\times_\pi V_i = \{(x,y)\in V_i\times V_i: \pi(x)=\pi(y) \}
\]
\begin{definition}
    (Definition 2.1 in \cite{borelfieldpaper}) A \textbf{Borel field of Polish spaces} is of a standard Borel space $ X $, a family $ V = (V_x)_{x\in X} $ of Polish spaces, and a standard Borel structure on $ V = \bigsqcup_{x\in X}V_x $ such that the following holds: \begin{enumerate}
        \item The map $\pi:V\rightarrow X $ where $ v\in X_{\pi(v)} $ is Borel.
        \item There exists a Borel map $ d:V\times_\pi V \rightarrow [0,\infty) $ such that for every $ x\in X $, the restriction of $ d $ to $ V_x\times V_x $ is a compatible complete metric for $ V_x $.
        \item There exists a sequence of Borel functions $ \varphi_n:X\rightarrow V $ such that for all $ x\in X $, $ \varphi_n(x)\in V_x $ and $ \{\varphi_n(x):n\in \mathbb{N} \} $ is dense in $ V_x $ with respect to $ d|_{V_x\times V_x} $. 
    \end{enumerate}
\end{definition}
\begin{definition}
    (Definition 2.4 in \cite{borelfieldpaper}) A \textbf{measured field of Polish spaces} is a standard $ \sigma $-finite measure space $ (X,\mu) $, a family $ V = (V_x)_{x\in X} $ of Polish spaces and a standard Borel structure on $ V $ for which the following hold:
    \begin{enumerate}
        \item $ \pi:V\rightarrow X $ is Borel
        \item There is a conull Borel set $ X_0\subseteq X $ and such that the restriction $ (V_x)_{x\in X_0} = \pi^{-1}(X_0) $ is a Borel field of Polish spaces. 
    \end{enumerate}
\end{definition}
\noindent In summary, given a measure on a standard Borel space, we can ignore a measure $ 0 $ set of points when we make our fibered Borel space.\\\\
\noindent As such, before we continue, we give the conull Borel set on which we will have a Borel field structure.
\begin{remark}\label{universeofinterest}
    There exists a $ \mu $-conull Borel set, $ X' $ such that the following properties hold:
    \begin{enumerate}
        \item For every $ x\in X' $, and every $ A\in \mathcal{B} $, and for all $ n\geq 1 $,
        \[
        \mu_x^{n+1}(A) = \int_X \mu_y^n(A)d\mu_x^1(y)
        \]
        \item From \ref{uniformly dense}, let $ C' $ is the algebra generated by $ \bigcup_{i\in \mathbb{N}}\{T^{-i}(C_k):k\in \mathbb{N}\} $. Then for every $ x\in X'$, for every $ s\in \mathcal{C'} $, and every $i\in \mathbb{N} $, the function
        \[
        \eta_{i,s}:x\mapsto \mu_x^i(s)
        \]
        is Borel measurable.
        \item The $ f\in L^1 $ from before has $ f|_{X'} $ is Borel.
        \item For every $ x\in X' $, for every $ n\in \mathbb{N} $, $ T^n(x)\in X' $.
    \end{enumerate}
\end{remark}
    \begin{proof}
        For the first condition, we already established this for $ \mu $-almost every $ x\in X $. So, we let $ X_1 $ be a $ \mu $-conull Borel set on which this holds.\\\\
        For the second condition, there are only countably many such $ \eta $'s that we require to be measurable, so we let $ X_2 $ be a $ \mu $-conull Borel set such that on $ X_2 $, $ \eta_{i,s}$'s are all simultaneously Borel measurable.\\\\
        For the third condition, we let $ X_3 $ be Borel and $ \mu $-conull on which $ f|_{X_3} $ is Borel measurable. \\\\
        Now, let $ X_0 = X_1\cap X_2\cap X_3 $, this is still $ \mu $-conull and Borel. And, the three properties described pass down to $ X_0 $. \\\\
        Finally, since $ T $ is measure preserving, $ X':= \bigcap_{n\in \mathbb{N}}T^n(X_0)\subseteq X_0 $ is $ \mu $-conull, and properties $ (1) $, $ (2) $, and $ (3) $ still hold, and we see that $ X' $ satisfies $ (4) $.
    \end{proof}
\noindent As described in Proposition 4.2 of \cite{borelfieldpaper}, we can induce a Borel field structure on a family of Polish spaces by providing adequate functions that we insist must be Borel measurable
\begin{proposition}\label{prop 4.2}
    (Prop 4.2 in \cite{borelfieldpaper}) If $ X $ is standard Borel, $ V = (V_x)_{x\in X} $ a family of Polish spaces with compatible complete metrics $ d_x $ for each $ x\in X $ on $ V_x $, and $ \mathcal{S} $ is a collection of sections $ \varphi:X\rightarrow V $ such that the following are true:
    \begin{enumerate}
        \item For all $ \varphi,\psi\in \mathcal{S} $, $ x\mapsto d_x(\varphi(x),\psi(x)) $ is Borel as a function from $ X $ to $ \mathbb{R} $
        \item The family $ \mathcal{S} $ is maximal - that is, if $ \psi:X\rightarrow V $ is a section such that for each $ \varphi\in \mathcal{S} $, $ x\mapsto d_x(\varphi(x),\psi(x)) $ is Borel, then $ \psi\in \mathcal{S }$. 
        \item There exists sequence $ \varphi_n $ in $ \mathcal{S} $ such that for each $ x\in X $, the set $ \{\varphi_n(x):n\in \N\} $ is dense in $ V_x $ with respect to $ d_x $. 
    \end{enumerate}
    Then there is a unique standard Borel structure on $ V $ such that $ \pi:V\rightarrow X$, $ d:V\times_\pi V \rightarrow[0,\infty) $, and $ \varphi:X\rightarrow V $ for $ \varphi\in S $ are all Borel. 
\end{proposition}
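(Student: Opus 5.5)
We will realize $V$ inside a product via distance profiles. Fix the dense sequence $(\varphi_n)_{n\in\N}\subseteq\mathcal{S}$ from condition (3) and define
\[
\Phi:V\rightarrow X\times[0,\infty)^{\N},\qquad \Phi(v)=\bigl(\pi(v),\,(d_{\pi(v)}(v,\varphi_n(\pi(v))))_{n\in\N}\bigr).
\]
The plan has five steps: show that $\Phi$ is injective, identify its image as a Borel set, transport the standard Borel structure of the image back to $V$, verify that the required maps are Borel, and finally prove uniqueness.

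\emph{Injectivity.} Suppose $v,w\in V_x$ have the same profile. Choose $n_k$ with $\varphi_{n_k}(x)\to v$. Then $d_x(w,\varphi_{n_k}(x))=d_x(v,\varphi_{n_k}(x))\to0$, so $w=v$.

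\emph{The image is Borel.} I expect this to be the main step. I would prove that $(x,r)$ lies in $\Phi[V]$ if and only if the following two conditions hold.
\begin{enumerate}
\item For all $n,m$ we have $\abs{r_n-r_m}\le d_x(\varphi_n(x),\varphi_m(x))\le r_n+r_m$.
\item For every rational $\varepsilon>0$ there is $n$ with $r_n<\varepsilon$.
\end{enumerate}
Necessity is just the triangle inequality together with density. For sufficiency, pick $n_k$ with $r_{n_k}\to0$. By (1), $d_x(\varphi_{n_k}(x),\varphi_{n_l}(x))\le r_{n_k}+r_{n_l}$, so the sequence $\varphi_{n_k}(x)$ is Cauchy. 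By completeness of $d_x$ it converges to some $v\in V_x$. Using (1) again, $\abs{d_x(\varphi_{n_k}(x),\varphi_m(x))-r_m}\le r_{n_k}\to0$, hence $d_x(v,\varphi_m(x))=r_m$ and $\Phi(v)=(x,r)$. Condition (1) of the proposition makes each map $x\mapsto d_x(\varphi_n(x),\varphi_m(x))$ Borel. The characterization uses only countably many Borel conditions, so $\Phi[V]$ is Borel in a standard Borel space and is therefore standard.

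\emph{Transport of structure.} Give $V$ the structure pulled back along the bijection $\Phi:V\rightarrow\Phi[V]$. This makes $V$ standard Borel.

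\emph{Required maps are Borel.} The map $\pi$ is a coordinate of $\Phi$, so it is Borel. For $v,w$ in the same fiber, density gives
\[
d(v,w)=\inf_n\bigl(r_n(v)+r_n(w)\bigr).
\]
This expresses $d$ through the coordinates of $\Phi$, so $d$ is Borel on $V\times_\pi V$. For any $\varphi\in\mathcal{S}$, the composite $\Phi\circ\varphi(x)=\bigl(x,(d_x(\varphi(x),\varphi_n(x)))_n\bigr)$ is Borel by condition (1), so $\varphi$ is Borel.

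\emph{Uniqueness.} Suppose $\mathcal{B}'$ is another standard Borel structure on $V$ for which $\pi$, $d$ and all sections in $\mathcal{S}$ are Borel. The map $v\mapsto(v,\varphi_n(\pi(v)))$ is then Borel into $V\times_\pi V$, and composing with $d$ shows $\Phi$ is $\mathcal{B}'$-Borel. So $\Phi$ is an injective Borel map from $(V,\mathcal{B}')$ onto the standard space $\Phi[V]$. By Lusin--Souslin it is a Borel isomorphism, and hence $\mathcal{B}'$ coincides with the structure constructed above.

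Maximality, condition (2), plays no role in existence or uniqueness. It only guarantees that $\mathcal{S}$ is exactly the collection of sections that are Borel for this structure.
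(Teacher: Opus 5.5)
The paper gives no proof of this statement. It is quoted from Vaes--Wouters and used as a black box in Proposition~\ref{borelonmalg} and Lemma~\ref{second lem}, so there is no in-paper argument to compare against.

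Your argument is a correct, self-contained proof:
\begin{itemize}
\item Injectivity of $\Phi$ follows from density.
\item Your two countable conditions characterize $\Phi[V]$ exactly. The key estimate $\abs{d_x(\varphi_{n_k}(x),\varphi_m(x))-r_m}\le r_{n_k}$ does follow from the two-sided inequality in your first image condition.
\item The formula $d(v,w)=\inf_n\bigl(r_n(v)+r_n(w)\bigr)$ makes $d$ Borel.
\item Lusin--Souslin gives uniqueness using only that $\pi$, $d$ and the $\varphi_n$ are Borel.
\end{itemize}

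What your route buys over the citation is the explicit observation that maximality plays no role in existence or uniqueness. This matters for how the paper applies the result. Where it invokes ``a standard Zorn's lemma argument'' to secure hypothesis (2), no choice is needed. Take the family of all sections $\psi$ with $x\mapsto d_x(\psi(x),\varphi_n(x))$ Borel for every $n$. It satisfies hypothesis (1) by your infimum formula, it is visibly maximal, and it is exactly the set of Borel sections of the structure you construct.
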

\begin{proposition}\label{borelonmalg}
    There exists a measured field structure structure on $ M:=\bigsqcup_{x\in X} \malg{\mu_x^1} $. Namely, there is a conull Borel set $ X_0\subseteq X $ such that the following are true on $ \bigsqcup_{x\in X_{0}}\malg{\mu_x^1} $:
    \begin{itemize}
        \item $ \pi: M\rightarrow X $ where $ \pi(m) = x $ where $ m\in \malg{\mu_x^1} $ is Borel measurable
        \item $ d:M\times_\pi M \rightarrow [0,\infty) $ where 
        \[
        d(a,b) = \mu_x^1(a\triangle b) 
        \]
        is Borel measurable
        \item There exists a sequence of Borel functions $ \{\varphi_n:X\rightarrow M \} $ such that for each $ x\in X $, $ \{\varphi_n(x):n\in \N\} $ is dense in $ \malg{\mu_x^1} $ with respect to metric $ d_x $. 
    \end{itemize} 
\end{proposition}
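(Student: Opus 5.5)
The plan is to apply Proposition \ref{prop 4.2} over the conull Borel set $X_0 := X'$ from Remark \ref{universeofinterest}, with fibers $V_x = \malg{\mu_x^1}$ and metric $d_x(a,b) = \mu_x^1(a\triangle b)$.

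First, each $V_x$ must be Polish. The metric $d_x$ is complete, since the measure algebra of a probability measure is complete; this is the standard $L^1$-completeness of indicator functions. Separability comes from Lemma \ref{uniformly dense}: the countable family $\mathcal{C} = \{C_k\}$ is dense in $\malg{\mu_x^1}$ for every $x$ simultaneously.

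The dense sequence of sections is the obvious one. Set $\varphi_k(x) := [C_k]_{\mu_x^1}$, the class of $C_k$ in $\malg{\mu_x^1}$. Condition (3) of Proposition \ref{prop 4.2} is then exactly Lemma \ref{uniformly dense}.

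For condition (1) on the generating sections, I need the maps
\[
x\mapsto d_x(\varphi_k(x),\varphi_j(x)) = \mu_x^1(C_k\triangle C_j)
\]
to be Borel on $X_0$. The set $C_k\triangle C_j$ lies in the algebra $\mathcal{C}'$, which is closed under symmetric difference. So each such map is one of the functions $\eta_{1,s}$, and these are Borel on $X'$ by item (2) of Remark \ref{universeofinterest}.

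Next, $\mathcal{S}$ is taken to be the maximal family. Following Vaes--Wouters, define $\mathcal{S}$ as the set of all sections $\psi$ such that $x\mapsto d_x(\varphi_k(x),\psi(x))$ is Borel for every $k$. Then I check condition (1) for arbitrary $\psi,\psi'\in\mathcal{S}$. By density of $\{\varphi_k(x)\}$ in each fiber,
\[
d_x(\psi(x),\psi'(x)) = \inf_k \big( d_x(\psi(x),\varphi_k(x)) + d_x(\varphi_k(x),\psi'(x)) \big),
\]
and a countable infimum of Borel functions is Borel. The same density argument gives maximality, condition (2): if $\psi$ is at Borel distance from every member of $\mathcal{S}$, then in particular it is at Borel distance from every $\varphi_k$, so $\psi\in\mathcal{S}$.

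Proposition \ref{prop 4.2} now yields a unique standard Borel structure on $\bigsqcup_{x\in X_0}\malg{\mu_x^1}$ in which $\pi$, $d$, and the sections $\varphi_k$ are all Borel. This is precisely a Borel field of Polish spaces over $X_0$. Off $X_0$, the fibers can be given any standard Borel structure, for instance the disjoint-union structure, which turns $M$ into a measured field.

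The main obstacle is making sure the section-based construction in Proposition \ref{prop 4.2} applies exactly as stated. In particular, its hypothesis (1) must hold for all of $\mathcal{S}$, not just for the generators; this is what the triangle-inequality infimum argument above is for. There is also a subtle point about null classes. If $\mu_x^1(C_k)=0$, the class $\varphi_k(x)$ is simply the zero element, which causes no problem.
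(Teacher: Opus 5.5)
Your proposal is correct and follows the paper's proof. Both apply Proposition \ref{prop 4.2} over the conull set of Remark \ref{universeofinterest} with the sections $x\mapsto[C_k]_{\mu^1_x}$, and both get Borelness of $x\mapsto\mu^1_x(C_k\triangle C_j)$ from item (2) of that remark. Your explicit maximal family $\mathcal{S}$, checked via the countable infimum over the dense sections, is simply a concrete version of the paper's ``standard Zorn's lemma argument''.

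The one misstep is your closing aside. The disjoint-union $\sigma$-algebra on the fibers over $X\setminus X_0$ is not standard Borel once that null set is uncountable, since it contains $\pi^{-1}(B)$ for every $B\subseteq X\setminus X_0$. Rather than trying to equip those fibers, simply discard the null set and work over $X_0$, as the paper does.
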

\begin{proof}
    The conull set is $ X_0 $ described in \ref{universeofinterest}, and we apply \ref{prop 4.2} to obtain the desired standard Borel structure.\\\\
    Enumerate the countable uniformly dense $ \mathcal{C} = \{C_i:i\in \mathbb{N}\} $ from Lemma \ref{uniformly dense}. Define Borel sections $ \varphi_n $ via
    \[
    \varphi_n(x) = \begin{cases}
        [C_{n-1}]_x & n\geq 1\\
        [\emptyset]_x & n = 0
    \end{cases}
    \]
    Then if $ n,m\in \omega $, then
    \begin{align*}
        d_x(\varphi_n(x),\varphi_m(x)) &= \mu^i_x ([C_n\triangle C_m]_x)\\
        &= \mu^i_x(C_n\triangle C_m)
    \end{align*}
    Thus, by \ref{universeofinterest}, $ x\mapsto d_x(\varphi_n(x),\varphi_m(x)) $ is Borel.\\\\
    Then, a standard Zorn's lemma argument yields (2) in \ref{prop 4.2}.
\end{proof}
\begin{corollary}
    As with the $ k=1 $ case, there exists a standard Borel structure on $ M:=\bigsqcup_{x\in X_{0}} \malg{\mu^i_x} $.
\end{corollary}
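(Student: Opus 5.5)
The plan is to repeat the proof of Proposition \ref{borelonmalg} verbatim with $\mu^1_x$ replaced by $\mu^i_x$ for a fixed $i\geq 1$. The case $i=0$ is trivial, since $\mu^0_x=\delta_x$ and $\malg{\delta_x}$ is the two-point space $\{[\emptyset],[X]\}$; the constant sections already give a Borel field. We work over the conull Borel set $X_0$ (equivalently $X'$) of Remark \ref{universeofinterest}. For each $x\in X_0$ we take $V_x=\malg{\mu^i_x}$ with metric $d_x(a,b)=\mu^i_x(a\triangle b)$. This is a complete metric, since the measure algebra of a probability measure is complete. It is separable by Lemma \ref{uniformly dense}, because $\mu^i_x$ is a Borel probability measure on the standard Borel space $X$. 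So each $V_x$ is Polish.

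For the sections, we take the same countable uniformly dense family $\mathcal{C}=\{C_k\}$ and set $\varphi_0(x)=[\emptyset]_x$ and $\varphi_n(x)=[C_{n-1}]_x$ in $\malg{\mu^i_x}$. Density of $\{\varphi_n(x)\}$ in $V_x$ for every $x$ is exactly the uniform density in Lemma \ref{uniformly dense}. The pairwise distances are $d_x(\varphi_n(x),\varphi_m(x))=\mu^i_x(C_{n-1}\triangle C_{m-1})$. Since $\mathcal{C}'$ is an algebra containing $\mathcal{C}$, the set $C_{n-1}\triangle C_{m-1}$ lies in $\mathcal{C}'$, so this distance equals $\eta_{i,s}(x)$ with $s=C_{n-1}\triangle C_{m-1}$. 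By item (2) of Remark \ref{universeofinterest}, it is Borel on $X_0$. This gives hypotheses (1) and (3) of Proposition \ref{prop 4.2} for the countable family $\{\varphi_n\}$.

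To obtain the maximal family required in (2), we do not use Zorn's lemma directly. Instead we define $\mathcal{S}$ to be the set of all sections $\psi$ such that $x\mapsto d_x(\varphi_n(x),\psi(x))$ is Borel for every $n$. We then check that $\mathcal{S}$ satisfies (1) and (2).

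For (1), take $\psi,\psi'\in\mathcal{S}$. By density of $\{\varphi_n(x)\}$ and the triangle inequality,
\[
d_x(\psi(x),\psi'(x))=\inf_n \bigl(d_x(\psi(x),\varphi_n(x))+d_x(\varphi_n(x),\psi'(x))\bigr).
\]
Indeed, each term in the infimum is at least $d_x(\psi(x),\psi'(x))$, and choosing $\varphi_n(x)$ within $\varepsilon$ of $\psi(x)$ makes the term at most $d_x(\psi(x),\psi'(x))+2\varepsilon$. A countable infimum of Borel functions is Borel.

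For (2), suppose $\psi$ is a section with $x\mapsto d_x(\varphi(x),\psi(x))$ Borel for every $\varphi\in\mathcal{S}$. Since every $\varphi_n$ belongs to $\mathcal{S}$, this holds in particular for all $\varphi_n$, so $\psi\in\mathcal{S}$.

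Proposition \ref{prop 4.2} then gives the unique standard Borel structure on $\bigsqcup_{x\in X_0}\malg{\mu^i_x}$ making $\pi$, $d$ and the sections Borel. The only point needing care is making sure the relevant functions $x\mapsto\mu^i_x(s)$ are Borel simultaneously for all levels $i$ on one conull set. This is handled by the countability built into Remark \ref{universeofinterest}: countably many $i$ and countably many $s\in\mathcal{C}'$. As a result the same $X_0$ works for every $i$ at once, and the structures on the $V_i$ can be used together later.
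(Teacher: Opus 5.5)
Your proposal is correct and follows the paper's route: you rerun Proposition \ref{borelonmalg} with $\mu^i_x$ in place of $\mu^1_x$, get Borel pairwise distances of the uniformly dense sections from item (2) of Remark \ref{universeofinterest}, and invoke Proposition \ref{prop 4.2}, with your explicit maximal family $\mathcal{S}$ replacing the paper's appeal to Zorn's lemma. The paper also explicitly requires the sections $x\mapsto[T^{-j}(C_n)]_{\mu^i_x}$ to be Borel, which it needs later for Lemma \ref{first lem}; these already lie in your $\mathcal{S}$ because $T^{-j}(C_n)\triangle C_m\in\mathcal{C}'$, so nothing is lost.
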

\begin{proof}
    We repeat the same argument as above, but also insist that for each $ i\leq k $, that the following sections are measurable:
    \[
    \varphi_{i,n}:x\mapsto [T^{-i}(C_n)]_{\mu_x^k}
    \]
    There are only countably many additional functions to insist are Borel and it is compatible with condition (1) of Proposition 4.2 of \cite{borelfieldpaper}.
\end{proof}
\begin{remark}
    By neglecting the null set $X\setminus X_0 $, we may assume that the Borel field structure is defined on all of $ X $. To simplify notation, we will just write $ X $. 
\end{remark}
\begin{corollary}\label{cor1}
    For any $ A \in V $, the following function is Borel:
    \[
    A\mapsto \mu^i_{\pi(A)}(A)
    \]
\end{corollary}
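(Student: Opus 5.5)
The plan is to reduce the claim to the defining property of the Borel structure on $V=\bigsqcup_{x\in X}\malg{\mu_x^i}$ from Proposition \ref{prop 4.2} and the corollary following Proposition \ref{borelonmalg}. By that result, $\pi$ and the fibrewise metric $d:V\times_\pi V\to[0,\infty)$, $d(a,b)=\mu^i_{\pi(a)}(a\triangle b)$, are Borel. The zero section $\varphi_0:x\mapsto[\emptyset]_x$ is also Borel, since it is one of the sections in $\mathcal{S}$. The first step is the identity
\[
\mu^i_{\pi(A)}(A)=\mu^i_{\pi(A)}(A\triangle\emptyset)=d\bigl(A,\varphi_0(\pi(A))\bigr)
\]
for every $A\in V$.

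The second step is to express the map $A\mapsto \mu^i_{\pi(A)}(A)$ as a composition of Borel maps. The map $A\mapsto (A,\varphi_0(\pi(A)))$ is Borel from $V$ into $V\times V$, because $\pi$ and $\varphi_0$ are Borel. Its image lies in $V\times_\pi V$, since $\pi(\varphi_0(\pi(A)))=\pi(A)$. The set $V\times_\pi V$ is a Borel subset of $V\times V$: it is the preimage of the diagonal of $X$ under $\pi\times\pi$, and the diagonal is Borel because $X$ is standard. Hence the composite $A\mapsto d(A,\varphi_0(\pi(A)))$ is Borel, which proves the corollary.

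The only point needing care is that the structure really does make the zero section $[\emptyset]_x$ Borel, and that $d$ is the $\mu^i_x$ symmetric-difference metric for the relevant level $i$, not just for $i=1$. For $i=1$ this holds by the explicit choice $\varphi_0$ in the proof of Proposition \ref{borelonmalg}. For general $i$, the same choice is made when the argument is repeated in the following corollary. Alternatively, one can note that $x\mapsto d_x(\varphi_0(x),\varphi_n(x))=\mu^i_x(C_{n-1})$ is Borel by Remark \ref{universeofinterest}, so maximality of $\mathcal{S}$ in Proposition \ref{prop 4.2} forces $\varphi_0\in\mathcal{S}$.

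Everything happens over the conull Borel set $X_0$, which by the preceding remark is identified with $X$. So the statement is literally Borel on $V=\pi^{-1}(X_0)$, and Borel modulo the neglected null set in general.
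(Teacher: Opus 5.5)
Your proof is correct and follows the paper's argument. The paper also writes $\mu^i_{\pi(A)}(A)=d(A,\varphi_0(\pi(A)))$ and then cites Lemma 2.3(1) of \cite{borelfieldpaper}; your composition of the Borel maps $\pi$, $\varphi_0$ and $d$ through the Borel set $V\times_\pi V\subseteq V\times V$ simply verifies that step by hand.
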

\begin{proof}
    This follows from Lemma 2.3(1) of \cite{borelfieldpaper} since 
    \[
    d(A,\varphi_0(\pi(A))) = d(A,[\emptyset]_{\pi(A)}) = \mu^i_{\pi(A)}(A) 
    \]
\end{proof}
\begin{definition}
     For each $ i $, we have a standard Borel structure on $ V_i:= \bigsqcup_{x\in X} \malg{\mu^x_i} $. Let $ \mathcal{V}_n = X \times \prod_{i=1}^n V_i $, which is also standard Borel with the product Borel $ \sigma $-algebra. Finally, let $ \mathcal{Y} = \bigsqcup_{n<\omega} \mathcal{V}_n $, which is also standard Borel as a disjoint union of countably many standard Borel spaces
\end{definition}
\begin{observation}
    For each $ i\in \mathbb{N} $, we let $ \proj_i:\mathcal{Y}\rightarrow V_i $ denote the projection onto the $ i $'th coordinate (where it makes sense),
    \[
    \proj_i:E\mapsto E^i
    \]
    Then $ \proj_i $ is a Borel measurable function. For ease of notation, we will use $ E^i $ to refer to $ \proj_i(E) $.
\end{observation}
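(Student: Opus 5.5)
The plan is to reduce the claim to the Borel measurability of coordinate projections on products and on countable disjoint unions of standard Borel spaces. Recall that $\mathcal{Y}=\bigsqcup_{n<\omega}\mathcal{V}_n$ carries the disjoint-union $\sigma$-algebra. A set $S\subseteq\mathcal{Y}$ is Borel exactly when $S\cap\mathcal{V}_n$ is Borel in $\mathcal{V}_n$ for every $n$. Likewise, a function $g$ on a subset of $\mathcal{Y}$ is Borel exactly when each restriction $g|_{\mathcal{V}_n}$ is Borel. So it suffices to check measurability piece by piece.

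\textbf{Domain.} I read ``where it makes sense'' as follows: $\proj_i$ is defined on $\mathcal{Y}_{\geq i}:=\bigsqcup_{n\geq i}\mathcal{V}_n$, with $\proj_0$ landing in $X$. Since $\mathcal{Y}_{\geq i}$ is a countable union of the clopen, Borel pieces $\mathcal{V}_n$, it is a Borel subset of $\mathcal{Y}$, and hence itself standard Borel.

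\textbf{Each piece.} Fix $n\geq i$. On $\mathcal{V}_n=X\times\prod_{j=1}^n V_j$, the map $\proj_i$ is the ordinary coordinate projection onto the $i$'th factor $V_i$. It is Borel by the definition of the product $\sigma$-algebra, which is the smallest $\sigma$-algebra making all coordinate projections measurable. Concretely, for a Borel set $B\subseteq V_i$ we have $\proj_i^{-1}(B)\cap\mathcal{V}_n=X\times V_1\times\cdots\times B\times\cdots\times V_n$, which is a Borel rectangle.

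\textbf{Gluing.} Then $\proj_i^{-1}(B)=\bigsqcup_{n\geq i}\bigl(\proj_i^{-1}(B)\cap\mathcal{V}_n\bigr)$ is a countable union of Borel subsets of $\mathcal{Y}$, so it is Borel. This shows $\proj_i$ is Borel.

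There is no real obstacle here. The only points needing care are the convention for the partial domain and the fact that the Borel structure on each $V_i$ is the one fixed by Proposition \ref{prop 4.2}. That structure plays no role in the argument, because measurability of projections holds for any product of measurable spaces.
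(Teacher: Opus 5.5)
Your proposal is correct and is essentially the paper's approach: the paper states this as an observation without proof, relying implicitly on $\mathcal{Y}$ being a countable disjoint union of the finite products $\mathcal{V}_n$. Your argument (Borel coordinate projection on each $\mathcal{V}_n$, then gluing over the Borel domain $\bigsqcup_{n\geq i}\mathcal{V}_n$) is exactly the routine verification the paper leaves unstated.
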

\begin{lemma}\label{first lem}
    Let $ T^{-1}:V_i\rightarrow V_{i+1} $ be $ A\mapsto T^{-1}[A] $. This is Borel between Borel fields. 
\end{lemma}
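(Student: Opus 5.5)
We must show that the map $A\mapsto T^{-1}[A]$ from $V_i$ to $V_{i+1}$ is Borel. The first point is that it is well defined fiberwise. For $A\in\malg{\mu^i_x}$ we take the image $[T^{-1}[A]]_{\mu^{i+1}_x}$, and this lies over the same base point, since the displayed iterated-integral identity gives
\[
\mu^{i+1}_x(T^{-1}[B])=\int_X \mu^1_y(T^{-1}[B])\,d\mu^{i}_x(y)=\mu^i_x(B)
\]
for $x\in X'$ (each $\mu^1_y$ lives on $T^{-1}\{y\}$, so $\mu^1_y(T^{-1}[B])=1_B(y)$). Thus the map is an isometry from $(\malg{\mu^i_x},d_x)$ into $(\malg{\mu^{i+1}_x},d_x)$. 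In particular it respects null-equivalence and is $1$-Lipschitz on each fiber.

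The main tool will be a Borel-field criterion: a map $\Phi:V\to W$ between Borel fields over $X$ is Borel if three conditions hold.
\begin{enumerate}
\item It commutes with the projections, $\pi_W\circ\Phi=\pi_V$.
\item It is fiberwise continuous.
\item For every pair of dense sections $\varphi_n$ of $V$ and $\psi_m$ of $W$, the function $x\mapsto d_W(\Phi(\varphi_n(x)),\psi_m(x))$ is Borel.
\end{enumerate}
Granting this, I would argue as follows. For $v\in V$, we have $\Phi(v)=\lim_k \Phi(\varphi_{n_k(v)}(\pi v))$, where $n_k(v)$ is the least $n$ with $d(v,\varphi_n(\pi v))<2^{-k}$. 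Each $n_k$ is Borel, because $d$ and $\varphi_n\circ\pi$ are Borel. Each $\Phi\circ\varphi_n$ is a Borel section, because its distances to the dense sections $\psi_m$ are Borel and the sections in $\mathcal S$ are maximal (Proposition \ref{prop 4.2}(2)). A pointwise limit of Borel maps into a Borel field along a fiber is Borel, by Lemma 2.3 of \cite{borelfieldpaper}: membership of the limit in a basic set is detected by the Borel distance functions $d(\cdot,\psi_m\circ\pi)$.

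It therefore remains to check condition 3. The dense sections of $V_i$ are the classes $[T^{-j}(C_n)]_{\mu^i_x}$ and $[C_n]_{\mu^i_x}$ used in the construction. Pulling these back, $T^{-1}[C_n]$ and $T^{-1}[T^{-j}C_n]=T^{-(j+1)}C_n$ lie in the algebra $\mathcal C'$ of Remark \ref{universeofinterest}. Hence the required distance to a dense section $[s']_{\mu^{i+1}_x}$ of $V_{i+1}$ is
\[
x\mapsto \mu^{i+1}_x\big(T^{-1}[s]\triangle s'\big),
\]
where $T^{-1}[s]\triangle s'\in\mathcal C'$. This function is Borel by Remark \ref{universeofinterest}(2). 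If the chosen Borel structure on $V_{i+1}$ was generated only by the sections $\varphi_{i,n}$ of the Corollary, I would simply enlarge the countable family of sections to all of $\mathcal C'$. This is harmless: $\mathcal C'$ is countable and its distance functions are Borel, so the enlarged family is still compatible with Proposition \ref{prop 4.2}(1).

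The step I expect to be the main obstacle is making the criterion rigorous. Concretely, one has to verify that a section $\Phi\circ\varphi_n$ is Borel merely because its distances to the dense family are Borel, which uses the maximality clause of Proposition \ref{prop 4.2}. One must also check that the limit argument produces a Borel map on all of $V_i$ rather than just on sections. I would lean on Lemma 2.3 of \cite{borelfieldpaper}, which characterizes Borel maps into a Borel field via the distance functions to the dense sections. The rest is bookkeeping on the conull set $X'$.
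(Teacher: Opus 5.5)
Your proposal is correct and follows the same route as the paper: fiberwise isometry (hence continuity) of $A\mapsto T^{-1}[A]$, Borelness of the sections $x\mapsto[T^{-1}[C_n]]_{\mu^{i+1}_x}$ coming from the countable algebra $\mathcal C'$ of Remark \ref{universeofinterest}, and then passing from a dense family of sections to the whole field as in the Vaes--Wouters framework. The only difference is presentation. The paper cites Lemma 2.6(2) of \cite{borelfieldpaper} directly and notes that these sections were already declared Borel in the Corollary. You instead reprove that lemma from Lemma 2.3 via the $n_k(v)$ approximation, and obtain Borelness of $\Phi\circ\varphi_n$ through maximality. This makes your version slightly more self-contained, and your justification of the fiberwise isometry is more careful than the paper's one-line appeal to measure preservation.
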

\begin{proof}
    First, since $ T $ is measure-preserving, it is well defined. We will apply Lemma 2.6 (2) from \cite{borelfieldpaper}. For each $ x\in X $, $ T $ is continuous as a map between $ \malg{\mu_x^i} $ and $ \malg{\mu_x^{i+1}} $ as it is an isometry due to measure disintegration. Using the $ \varphi_n $ obtained in \ref{borelonmalg}, it remains to check that $ T\circ \varphi_n:X\rightarrow V_{i+1} $ is Borel for every $ k $. Fix $ k\in \N $. Note that for every $ x $, $ T\circ \varphi_k(x) = [T^{-1}[C_k]]_{\mu_x^{i+1}} $. But this is precisely the function $ \varphi_{k,i+1} $, which is Borel for free.
\end{proof}
\begin{lemma}\label{second lem}
    Let $ W_i = V_i\times_\pi V_i $. Then $ W_i $ also has a Borel field structure. Furthermore, for each operation $ \cdot \in \{\cup, \cap, \setminus\} $, the function $ \zeta: W_i \rightarrow V_{i}$ given by $ (A,B)\mapsto A \cdot B $, is Borel between Borel fields. 
\end{lemma}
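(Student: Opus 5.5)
To prove the statement, I would follow the pattern of Lemma \ref{first lem} and lean on the fibered-product and Borel-map lemmas of \cite{borelfieldpaper}, namely Lemma 2.3 and Lemma 2.6(2).

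\textbf{The Borel field structure on $W_i$.} Each fiber is $(W_i)_x = \malg{\mu^i_x}\times \malg{\mu^i_x}$, which is Polish. I would equip it with the complete compatible metric
\[
d_x((A,B),(A',B')) = \mu^i_x(A\triangle A') + \mu^i_x(B\triangle B').
\]
The Borel structure on $W_i$ is the one inherited as a Borel subset of $V_i\times V_i$. It is Borel because it is the preimage of the diagonal of $X\times X$ under the Borel map $\pi\times\pi$. The projection $(A,B)\mapsto \pi(A)$ is Borel. The metric $d$ is Borel on $W_i\times_\pi W_i$, being a sum of two compositions of the Borel metric on $V_i\times_\pi V_i$ with Borel coordinate maps. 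For dense sections I would take the countable family $\psi_{n,m}(x) = (\varphi_n(x),\varphi_m(x))$. These are Borel because each coordinate is, and they are fiberwise dense because $\{\varphi_n(x)\}$ is dense in $\malg{\mu^i_x}$. If the paper's convention requires the structure to come from Proposition \ref{prop 4.2}, I would check agreement with the maximal family generated by these sections; the uniqueness clause there identifies it with the subspace structure.

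\textbf{Borelness of $\zeta$.} I would apply Lemma 2.6(2) of \cite{borelfieldpaper}, which reduces Borelness of a fiberwise-continuous map between Borel fields to Borelness along a dense family of Borel sections. There are two things to check.

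\emph{Fiberwise continuity.} This holds because each operation $\cdot\in\{\cup,\cap,\setminus\}$ satisfies
\[
(A\cdot B)\triangle(A'\cdot B')\subseteq (A\triangle A')\cup(B\triangle B'),
\]
so $\zeta_x$ is $1$-Lipschitz with respect to $d_x$.

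\emph{Borelness along dense sections.} Here I must show that $x\mapsto \zeta(\psi_{n,m}(x)) = [C_n\cdot C_m]_{\mu^i_x}$ is a Borel section of $V_i$. By the Borel field axioms (Lemma 2.3(1) of \cite{borelfieldpaper}), a section $\sigma$ is Borel once $x\mapsto d_x(\sigma(x),\varphi_k(x))$ is Borel for every $k$. In our case this is $x\mapsto \mu^i_x((C_n\cdot C_m)\triangle C_k)$. Since $\mathcal{C}$ is closed under finite unions but not under differences, I would use the algebra $\mathcal{C}'$ of Remark \ref{universeofinterest}. The set $(C_n\cdot C_m)\triangle C_k$ lies in $\mathcal{C}'$, so the function is Borel by condition (2) of that remark. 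For the levels $V_i$ with $i\ge 2$, the same argument applies to the additional sections $[T^{-j}C_n]$, since $\mathcal{C}'$ contains their Boolean combinations too.

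\textbf{Main obstacle.} I expect the main difficulty to be matching the hypotheses of Lemma 2.6(2) exactly. The dense sections used must be ones already declared Borel in the maximal family defining $V_i$. If $\mathcal{C}$ itself is not an algebra, I would first enlarge the generating sections of $V_i$ to those indexed by $\mathcal{C}'$. This enlargement is harmless: the added sections are Borel relative to the originals by Remark \ref{universeofinterest}(2), so they already lie in the maximal family, and the Borel structure is unchanged.
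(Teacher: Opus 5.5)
Your proposal is correct and follows essentially the same route as the paper. Both put the sum metric on $W_i$ with dense sections $\psi_{n,m}=(\varphi_n,\varphi_m)$, then apply Lemma 2.6(2) of \cite{borelfieldpaper}, using fiberwise continuity and the Borelness of $x\mapsto[C_n\cdot C_m]_{\mu^i_x}$ obtained from the algebra $\mathcal{C}'$ of Remark \ref{universeofinterest}. Your extra steps are harmless elaborations of what the paper leaves implicit: realizing $W_i$ directly as a Borel subset of $V_i\times V_i$ (which agrees with the paper's Proposition \ref{prop 4.2} structure by uniqueness), the explicit $1$-Lipschitz estimate, and the explicit passage from $\mathcal{C}$ to $\mathcal{C}'$.
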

\begin{proof}
    Since for each $ x $ and $ i $, $ \malg{\mu_x^i} $ is Polish, so is $ \malg{\mu_x^i}\times \malg{\mu_x^i} $; use the metric 
    \[
    d((A_1,A_2),(B_1,B_2)) = \mu_x^i(A_1\triangle B_1)+\mu_x^i(A_2\triangle B_2) 
    \] 
    Then we once again use \ref{prop 4.2} to obtain a standard Borel structure on $ W_i $. From \ref{uniformly dense}, $ \mathcal{C}\times \mathcal{C} $ is also uniformly dense in $ \malg{\mu_x^i}\times \malg{\mu_x^i} $ for every $ x $, and is still countable. We let $ S = \{\psi_{m,n}:m,n\in \mathbb{N}\} $ where
    \[     
    \psi_{i,j}(x) = (\varphi_i(x),\varphi_j(x))     
    \] 
    where $ \varphi_n $ is given in \ref{borelonmalg}. Then, using an identical argument as in \ref{borelonmalg}, we obtain the desired standard Borel structure. \\\\
    For the second part, as with the previous lemma, we use Lemma 2.6 (2) from \cite{borelfieldpaper}. For each $ x \in X_0 $, $ \zeta $ is continuous as a map between $ \malg{\mu_x^i}\times \malg{\mu_x^i} $ and $ \malg{\mu_x^{i}} $. And, using the $ \psi $ obtained in the first part of this proof, we must check that $ \zeta\circ \psi: X\rightarrow V_i $ is Borel. But this is immediate from the proof of \ref{borelonmalg} and \ref{universeofinterest}.
\end{proof}
\noindent A similar argument shows the following:
\begin{lemma}\label{third lem}
    Let $ W_{i,j} = V_i\times_\pi V_j $. Then $ W_{i,j} $ also has a Borel field structure. Furthermore, the function $ \eta:W_{i+1,i}\rightarrow W_i $, $ (A,B)\mapsto (A,T^{-1}(B)) $ is Borel between Borel fields.
\end{lemma}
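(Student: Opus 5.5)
We prove the two parts of the lemma in turn, mirroring Lemma \ref{second lem} for the first and Lemma \ref{first lem} for the second.

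\textbf{Borel field structure on $W_{i,j}$.} For each $x\in X$ the fiber is $\malg{\mu_x^i}\times \malg{\mu_x^j}$. It is Polish with the complete compatible metric
\[
d_x((A_1,A_2),(B_1,B_2)) = \mu_x^i(A_1\triangle B_1)+\mu_x^j(A_2\triangle B_2).
\]
As the countable dense family of sections we take
\[
\psi_{m,n}(x) = \bigl([C_m]_{\mu_x^i},[C_n]_{\mu_x^j}\bigr),\qquad m,n\in\N .
\]
By Lemma \ref{uniformly dense}, $\mathcal{C}$ is dense in each factor for every $x$, so $\{\psi_{m,n}(x)\}$ is dense in the product. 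For the distance condition,
\[
d_x(\psi_{m,n}(x),\psi_{m',n'}(x)) = \mu_x^i(C_m\triangle C_{m'})+\mu_x^j(C_n\triangle C_{n'}).
\]
Each summand is Borel by Remark \ref{universeofinterest}(2), since $C_m\triangle C_{m'}$ lies in the algebra $\mathcal{C}'$. To meet condition (1) of Proposition \ref{prop 4.2} for later use, we also add the countably many sections $x\mapsto([T^{-k}C_m]_{\mu^i_x},[T^{-l}C_n]_{\mu^j_x})$. Their pairwise distances are again of the form $\mu_x^i(s)+\mu_x^j(s')$ with $s,s'\in\mathcal{C}'$, hence Borel. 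We then close this family under the maximality condition (2), exactly as in Proposition \ref{borelonmalg}, and Proposition \ref{prop 4.2} yields the standard Borel structure on $W_{i,j}$. The same argument for the pair $(i,i)$ recovers the structure on $W_i$ from Lemma \ref{second lem}.

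\textbf{Borelness of $\eta$.} The map is $\eta(A,B)=(A,T^{-1}B)$ from $W_{i+1,i}$ to $W_{i+1,i+1}=W_{i+1}$. Note that for the second coordinate to land in the same fiber $\malg{\mu^{i+1}_x}$ as $A$, the target must be $W_{i+1}$, so we read $W_i$ in the statement that way. We apply Lemma 2.6(2) of \cite{borelfieldpaper}, which requires two things.

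First, $\eta$ must be fiberwise continuous. On each fiber $\eta$ is the identity in the first coordinate. In the second coordinate it is $B\mapsto T^{-1}B$, which maps $\malg{\mu_x^i}$ to $\malg{\mu_x^{i+1}}$ isometrically, because $\mu_x^{i+1}(T^{-1}B)=\mu_x^i(B)$ (Proposition \ref{coherencelemma}, as used in Lemma \ref{first lem}). Hence $\eta$ is $1$-Lipschitz on each fiber.

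Second, $\eta$ composed with a dense sequence of Borel sections must be Borel. We compute
\[
\eta\circ\psi_{m,n}(x) = \bigl([C_m]_{\mu^{i+1}_x},[T^{-1}C_n]_{\mu^{i+1}_x}\bigr).
\]
This is among the sections we deliberately declared Borel in $W_{i+1}$, or it can be checked against the dense sections via distances $\mu^{i+1}_x(C_{m}\triangle C_{m'})+\mu^{i+1}_x(T^{-1}C_n\triangle C_{n'})$. These are Borel by Remark \ref{universeofinterest}(2), so by maximality the composite belongs to $\mathcal{S}$.

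\textbf{Main obstacle.} The delicate point is ensuring that the enlarged section families remain consistent with condition (1) of Proposition \ref{prop 4.2}. Concretely, every pairwise distance among the chosen sections must be Borel, and this holds only because all the sets involved lie in the countable algebra $\mathcal{C}'$ generated by the $T^{-k}C_n$. It is exactly this closure under $T^{-1}$ that Remark \ref{universeofinterest} was set up to supply. Once that is checked, both parts follow formally from Proposition \ref{prop 4.2} and Lemma 2.6(2).
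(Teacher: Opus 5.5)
Your proposal is correct and follows the route the paper intends; the paper gives no separate proof and only says ``a similar argument shows''. You build the structure on $W_{i,j}$ through Proposition \ref{prop 4.2} with the product sections $\psi_{m,n}$ as in Lemma \ref{second lem}, then get Borelness of $\eta$ from the fiberwise isometry together with Borelness of $\eta\circ\psi_{m,n}$ via Lemma 2.6(2), as in Lemma \ref{first lem}. You are also right that the codomain must be read as $W_{i+1}$ rather than $W_i$, which is how the map is actually used in Lemma \ref{BorelCT}.
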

\begin{lemma}\label{BorelCT}
    The following functions and sets are Borel measurable:
    \begin{enumerate}
        \item The function that computes the mass of an element of $ \mathcal{Y} $:  $E\mapsto m_{\pi(E)}(E) $
        \item For a fixed $ f\in L^1 $, $ E \mapsto \sum_{i=0}^{\ell(E)} \int_{E^i}fd\mu_i^{\pi(E)}$
        \item The set $ \CT $ of coherent trees, as well as $ \CT_x $ for each $ x $.
        \item The function $ E\mapsto A_{f,E} $
    \end{enumerate}
\end{lemma}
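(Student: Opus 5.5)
The plan is to reduce each item to a countable combination of the Borel maps already constructed: the projections $\proj_i$, Corollary \ref{cor1}, and Lemmas \ref{first lem}, \ref{second lem}, \ref{third lem}. Since $\mathcal{Y}=\bigsqcup_n \mathcal{V}_n$ is a countable disjoint union, it suffices to prove Borelness on each $\mathcal{V}_n$ separately. On $\mathcal{V}_n$ the length $\ell(E)=n$ is constant, and $\pi(E)$ is the first coordinate, so $\pi$ is Borel.

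For (1), note that $m_{\pi(E)}(E)=1+\sum_{i=1}^n \mu^i_{\pi(E)}(E^i)$. Each summand is the composition of $\proj_i$ with the Borel map of Corollary \ref{cor1}, so a finite sum of these is Borel.

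For (4), once (1) and (2) are established, $A_{f,E}$ is a quotient of two Borel functions. On $\CT$ the denominator is at least $1$, so the quotient is Borel.

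For (3), I would write $\CT\cap\mathcal{V}_n$ as an intersection of preimages of Borel sets under Borel maps, one for each defining condition.
\begin{itemize}
\item Positivity: $\mu^i_{\pi(E)}(E^i)>0$ is the preimage of $(0,\infty)$ under the Borel map from Corollary \ref{cor1}.
\item Coherence at level $0\to1$: $E^0=\{x\}$ is automatic, since the $X$-coordinate is $x$. The condition $\mu^1_x(E^1\setminus T^{-1}\{x\})=0$ holds for all $x$, because $\mu^1_x$ is supported on $T^{-1}\{x\}$.
\item Coherence at level $i\to i+1$ for $i\geq1$: map $E\mapsto (E^{i+1},E^i)\in W_{i+1,i}$, which is Borel because both projections are Borel and share the same base point. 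Apply $\eta$ from Lemma \ref{third lem} to get $(E^{i+1},T^{-1}E^i)\in W_{i+1}$. Apply the set difference $\zeta$ from Lemma \ref{second lem}, then the measure map of Corollary \ref{cor1}, and require the result to equal $0$.
\end{itemize}
Each section $\CT_x$ is then the preimage of $\{x\}$ under $\pi$ intersected with $\CT$, hence Borel.

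The main obstacle is (2): showing that $(x,A)\mapsto\int_A f\,d\mu^i_x$ is Borel on $V_i$. I would argue in three stages.
\begin{itemize}
\item \emph{Dense sections.} For $A=\varphi_n(x)=[C]_x$ with $C$ in the countable algebra $\mathcal{C}'$, the map $x\mapsto\int_C f\,d\mu^i_x$ is Borel. For indicator functions $f=1_B$ with $B$ Borel, this holds on the conull set of Remark \ref{universeofinterest} by the disintegration theorem. It extends to integrable $f$ by monotone class arguments and truncation. To keep the null set countable, I would first replace $f$ by a Borel version and enlarge $X'$ accordingly.
\item \emph{Continuity on fibers.} For fixed $x$ with $f\in L^1(\mu^i_x)$ (almost every $x$), the map $A\mapsto\int_A f\,d\mu^i_x$ is continuous on $\malg{\mu^i_x}$, by absolute continuity of the integral.
\item \emph{Conclusion.} A function on a Borel field that is fiberwise continuous and Borel along a dense sequence of Borel sections is Borel, as in Lemma 2.6 of \cite{borelfieldpaper}.
\end{itemize}
Concretely, I would define $g_n(A)$ to be the value at $\varphi_{k}(\pi(A))$, where $k$ is the least index with $d(A,\varphi_k(\pi(A)))<2^{-n}$. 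This $g_n$ is Borel, and $g_n\to g$ pointwise. Summing the levels gives (2), and then (4) follows as described above.
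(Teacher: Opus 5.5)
Your proof is correct. For (1), (3) and (4) it follows the paper's argument. For (2) you take a genuinely different route.

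\textbf{Item (2).} The paper approximates the integrand. It takes Borel simple $s_n\to f$ with $\abs{s_n}\le\abs{f}$, asserts that $E\mapsto\sum_i\int_{E^i}s_n\,d\mu^i_{\pi(E)}$ is Borel as a combination of maps from Corollary \ref{cor1}, and concludes by dominated convergence.

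You approximate the set instead. You use fiberwise continuity of $A\mapsto\int_A f\,d\mu^i_x$ together with Borelness along the dense sections $[C_k]_x$. This is the same Lemma 2.6 mechanism the paper already uses for Lemmas \ref{first lem} and \ref{second lem}, so your argument is uniform with the rest of that section.

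The paper's route is more elementary but hides a step. For an arbitrary Borel level set $B$ of $s_n$, the quantity $\mu^i_x(E^i\cap B)$ falls under Corollary \ref{cor1} only after two further facts:
\begin{enumerate}
\item $x\mapsto[B]_x$ is a Borel section, which costs further null sets (countably many in total);
\item one composes with the intersection map of Lemma \ref{second lem}.
\end{enumerate}
Your route needs only the countably many functions $x\mapsto\int_{C_k}f\,d\mu^i_x$ to be Borel, so the bookkeeping is cleaner.

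One correction in your first bullet. Remark \ref{universeofinterest} only guarantees Borelness of $x\mapsto\mu^i_x(s)$ for $s\in\mathcal{C}'$, not of $x\mapsto\mu^i_x(C\cap B)$ for arbitrary Borel $B$. So do not appeal to it for indicators. Instead apply the disintegration theorem directly to $f1_{C_k}$; this yields a function that is Borel off a null set. Then discard one Borel null set per pair $(k,i)$. Also discard the null set where $f\notin L^1(\mu^i_x)$, which both routes need.

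\textbf{Item (3).} Your version is slightly more complete than the paper's. The paper's proof omits the positivity condition. It also does not separate out the level $0\to1$ step, and there is no Borel field $V_0$ for Lemma \ref{first lem} to act on there.

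Conversely, you leave implicit a condition the paper lists first. Elements of $\mathcal{V}_n$ may have coordinates lying over different base points, so $\CT$ must include the Borel condition $\pi(E^i)=\pi(E)$ for all $i$. Your phrase ``share the same base point'' presupposes this condition. It is also what makes $\mu^i_{\pi(E)}(E^i)$ in (1) agree with the map of Corollary \ref{cor1}.
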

\begin{proof}
    \begin{enumerate}
        \item This is a sum of Borel functions: 
        \[
        \sum_{i=0}^{\ell(E)} \mu_{\pi(E^i)}^{i}(E^i) 
        \]
        \item On $ X $, $ f $ is Borel measurable. Fix a sequence of Borel measurable simple functions $ s_n $ such that $ \abs{s_n}\leq \abs{f} $ and $ s_n $ converges pointwise to $ f $ everywhere (on $ X $). For each $ n $, let $ \gamma_n:\mathcal{Y}\rightarrow \mathbb{R} $, where
        \[
        \gamma_n(E) = \sum_{i=0}^{\ell(E)} \int_{E^i} s_n d\mu_{\pi_0(E)}^i  
        \]
        Then $ \gamma_n $ is Borel measurable as it is a finite linear combination of functions of the form from \ref{cor1}. Then, $ E\mapsto \sum_{i=0}^{\ell(E)}\int_{E^i} f\mu_{\pi_0(E)}^i $ 
        is the pointwise limit of $ \gamma_n $ by the Dominated Convergence Theorem. Hence it is Borel measurable
        \item We observe that $ E $ is a coherent tree if and only if the following are true:
        \begin{itemize}
            \item For all $ i\leq \ell(E) $, $ \pi(E^i) = x $. 
            \item For all $ i \leq \ell(E) $, 
            \[
            d^i_x(E^{i+1}\setminus T^{-1}[E^i],\varphi_0(\pi(E^i))) = 0 
            \]
        \end{itemize}
        It is easy to see that the first calculation is Borel. The second is also Borel from \ref{first lem}, \ref{second lem}, and \ref{third lem}.
        \item This follows from (1) and (2).
    \end{enumerate}
\end{proof}
\begin{lemma}
    \label{gluing}(Gluing Lemma) Let $ x\in X $ and $ n\in \N $, and suppose $ A \subseteq T^{-n}\{x\} $ be measurable. Further suppose that for each $ y\in A $, that the function $ y\mapsto E_y $ is measurable as a function from $ X $ into $ \mathcal{Y} $ with $ E_y\subseteq T^{-1}\{y\} $. Then there exists a measurable set $ F\subseteq T^{-(n+1)}\{x\} $ so that for $ \mu_x^n $-a.e. $ y\in A $, $ \mu_y^1(F) = \mu_y^1(E_y) $. Furthermore, if $ f\in L^1 $, then 
    \[
    \int_{F} fd\mu_x^{n+1} = \int_A \int_{E_y} fd\mu_y^1d\mu_x^n(y)
    \].
\end{lemma}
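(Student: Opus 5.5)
The natural candidate is to take the union of the fibre sets. Set
\[
F \;=\; \{z \in T^{-(n+1)}\{x\} : T(z)\in A \text{ and } z \in \tilde E_{T(z)}\},
\]
where $\tilde E_y$ is a Borel representative of the class $E_y\in\malg{\mu_y^1}$. The representatives have to be chosen jointly Borel in $(y,z)$. The plan has three steps.

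First, I would build a Borel set $G\subseteq X\times X$ with $\mu_y^1(G_y\triangle E_y)=0$ for $\mu_x^n$-a.e.\ $y\in A$. I would do this by approximation through the uniformly dense family $\mathcal{C}=\{C_k\}$ of Lemma \ref{uniformly dense}.
\begin{itemize}
\item By the Borel field structure of Proposition \ref{borelonmalg} and Corollary \ref{cor1}, for each $k$ the map $y\mapsto \mu_y^1(E_y\triangle C_k)$ is measurable. It is the composition of $y\mapsto (E_y,\varphi_{k+1}(y))$ into $V_1\times_\pi V_1$ with the Borel map $d$.
\item Let $k_j(y)$ be the least $k$ with $\mu_y^1(E_y\triangle C_k)<2^{-j}$. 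This is a measurable function of $y$.
\item Put $G^j=\{(y,z): z\in C_{k_j(y)}\}$, which is measurable in the product.
\item Put $G=\limsup_j G^j$.
\end{itemize}
For each fixed $y$, Borel--Cantelli applied to $\mu_y^1$ gives $\mu_y^1(G_y\triangle E_y)=0$, since $\sum_j 2^{-j}<\infty$. Modifying on null sets, I would assume all these maps are Borel on a $\mu_x^n$-conull subset of $A$, as in Remark \ref{universeofinterest}.

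Second, define $F=\{z\in T^{-(n+1)}\{x\}: T(z)\in A,\ (T(z),z)\in G\}$. This is the preimage of $G$ under the measurable map $z\mapsto(T(z),z)$, so it is measurable. Now use Proposition \ref{coherencelemma}, in the form $\mu_x^{n+1}(\cdot)=\int\mu_y^1(\cdot)\,d\mu_x^n(y)$. This is the analogue of \eqref{eq:1} with the roles of $1$ and $n$ swapped, obtained by the same uniqueness-of-disintegration argument. Each $\mu_y^1$ is concentrated on $T^{-1}\{y\}$, and on that fibre $T(z)=y$. Hence $\mu_y^1(F\triangle E_y)=\mu_y^1(G_y\triangle E_y)=0$ for a.e.\ $y\in A$, which gives the first claim.

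Third, for the integral identity I would first take $f=\mathbf{1}_B$ with $B$ Borel. Then
\[
\mu_x^{n+1}(F\cap B)=\int\mu_y^1(F\cap B)\,d\mu_x^n(y)=\int_A \mu_y^1(E_y\cap B)\,d\mu_x^n(y).
\]
Here the restriction to $A$ holds because $\mu_y^1(F)=0$ for $y\notin A$. Linearity then handles simple functions, and dominated convergence handles $f\in L^1$, exactly as in Lemma \ref{BorelCT}(2). This uses $f\in L^1(\mu_x^{n+1})$ for a.e.\ $x$.

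I expect the main obstacle to be the first step. The hypothesis only gives measurability of $y\mapsto E_y$ into the abstract Borel field $\mathcal{Y}$. To turn that into an actual jointly measurable subset of $X\times X$, one has to extract concrete representatives through the countable dense sections $\varphi_k$ and control null sets uniformly in $y$. The swapped version of Proposition \ref{coherencelemma} is needed on a conull set, and I would verify it by the same uniqueness argument.
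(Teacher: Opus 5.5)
Your proposal is correct and uses essentially the same construction as the paper. Both arguments select, for each $y$, the first $C_k$ within a given tolerance of $E_y$, and pull these choices back through $T$ to form the approximants $\bigcup_k T^{-1}[\{y : k_j(y)=k\}]\cap C_k$. Both then transfer fiberwise estimates via $\mu_x^{n+1}=\int \mu_y^1\,d\mu_x^n(y)$, which the paper uses without comment and you explicitly justify by uniqueness of disintegration.

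The only difference is how you pass to the limit. The paper takes the limit of a Cauchy sequence in $\malg{\mu_x^{n+1}}$ and then asserts fiberwise convergence, which strictly needs a subsequence or summable tolerances. Your $\limsup$ with tolerances $2^{-j}$ and a fiberwise Borel--Cantelli argument produces a concrete representative directly and avoids that point.
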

\begin{proof}
    We recall $ \mathcal{C} = \{C_i:i<\omega\} $ from \ref{uniformly dense}. By the previous lemma, the map $ y\mapsto \mu_y^1(E_y) $ is a Borel measurable function from $ X $ into $ \mathbb{R} $. For each $ m\in \N $ and $ \varepsilon>0 $, let
    \[
    D^\varepsilon_m = \{y\in A:\mu_y^1(E_y\triangle C_m)<\varepsilon\}
    \]
    This is the set of $ y\in A $ for which $ C_m $ closely approximates $ E_y $. Then $ D^\varepsilon_m $ is measurable. Let $ F^\varepsilon_1 = D^\varepsilon_1 $, and for $ k\geq 1 $, let $ F_{k+1} = D^\varepsilon_{k+1}\setminus F_k $. Then $ F_k $ are all Borel and are disjoint. We see $ F_k $ are the set of $ y\in A $ for which the first member of $ \mathcal{C} $ which closely approxmiates $ E_y $ is $ C_k $. Then let 
    \[
    F_\varepsilon:= \bigcup_{k<\omega} T^{-1}[F_k^\varepsilon]\cap C_k
    \]
    which is also measurable.\\\\
    Now, let $ \varepsilon_m \searrow 0 $, and $ F_{\varepsilon_m} $ is Cauchy in $ \malg{\mu_x^{n+1}} $. More specifically, $ \{F_{\varepsilon_m}:m<\omega\} $ forms a Cauchy sequence in $ \malg{\mu_x^{n+1}} $. Indeed, we see
    \begin{align*}
        \mu_x^{n+1}(F_{\varepsilon_m}\triangle F_{\varepsilon_\ell}) &= \int_A \mu_y^1(F_{\varepsilon_m}\triangle F_{\varepsilon_{\ell}})d\mu_x^{n}(y)\\
        &= \int_A \mu_y^1\left(\left(\bigcup_{k<\omega}T^{-1}[F^{\varepsilon_m}_k]\cap C_k\right)\triangle\left(\bigcup_{k<\omega}T^{-1}[F^{\varepsilon_\ell}_k]\cap C_k\right)\right)d\mu_x^{n}(y)\\
        &= \int_A \mu_y^1\left(T^{-1}\{y\}\cap \left(\left(\bigcup_{k<\omega}T^{-1}[F^{\varepsilon_m}_k]\cap C_k\right)\triangle\left(\bigcup_{k<\omega}T^{-1}[F^{\varepsilon_\ell}_k]\cap C_k\right)\right)\right)d\mu_x^n(y)\\
        &= \int_A \mu_y^1(C_k\triangle C_{k'})d\mu_x^n(y)\quad (\textmd{where }\mu_y^1(E_y\triangle C_k)<\varepsilon_m,\textmd{ }\mu^y_1(E_y\triangle C_{k'})<\varepsilon_\ell)\\
        &\leq \int_A \mu_y^1(C_k\triangle E_y) + \mu_y^1(E_y\triangle C_{k'})d\mu_x^n(y)\\
        &\leq \int_A (\varepsilon_m + \varepsilon_\ell) d\mu_x^n(y)\\
        &\leq \varepsilon_m+\varepsilon_\ell
    \end{align*}
    Let $ F $ be the limit in $ \malg{\mu_x^{n+1}} $. We claim that this $ F $ satisfies the required property described. Indeed, 
    \[
    \mu_x^{n+1}(F) = \int_A \mu_y^1(F)d\mu_x^n(y)
    \]
    Since $ F_{\varepsilon_n}\rightarrow F $ in $ \malg{\mu_x^{n+1}} $, $ \mu_x^{n+1}(F_{\varepsilon_n}\triangle F)\rightarrow 0 $. Hence, for $ \mu_x^{n+1} $-a.e. $ y\in A $, $ \mu^1_y(F\triangle F_{\varepsilon_n}) \rightarrow 0 $ as well. Then for those $ y $, and for each $ n $, since $ \mu_y^1(F_{\varepsilon_n}\triangle E_y) = \mu_y^1(C_n\triangle E_y) <\varepsilon_n $, we see that $ \mu_y^1(F\triangle E_y) = 0 $, as desired. In particular,
    \[
    \mu_x^{n+1}(F) = \int_A \mu_y^1(F)d\mu_x^n(y) = \int_A \mu_y^1(E_y)d\mu_x^n(y)
    \]
    The statement about $ f$ follows immediately from measure disintegration.
\end{proof}
\section{Invariance}
\noindent Let $ f\in L^1(X,\mathcal{B},\mu) $, and recall the definitions for $ f^* $ and $ f_* $:
\begin{definition}
    The \textbf{average of $ f $ over $ E_x $} is
\[
A_{f,E_x} = \frac{\sum_{i=0}^{\ell(E_x)} \int_{E^i}f(z)d\mu^i_x(z)}{m_x(E_x)}
\]
Then, we let
\[
f^*(x) = \limsup_{m_x(E_x)\nearrow \infty} A_{f,E_x},\quad f_*(x) = \liminf_{m_x(E_x)\nearrow \infty} A_{f,E_x}
\]
where $ E_x $ varies over the (measurable) set of coherent trees, $ \CT_x $.
\end{definition}
\begin{lemma}
    The functions $ f^* $ and $ f_* $ are measurable $ \sigma(\Sigma^1_1) $-measurable. 
\end{lemma}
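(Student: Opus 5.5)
We will write $f^*$ and $f_*$ as countable combinations of functions defined by projections of Borel sets, which are analytic. For each rational $q$ and each $N\in\N$, consider
\[
P_{q,N} = \{ E\in \CT : m_{\pi(E)}(E) > N,\ A_{f,E} > q\}\subseteq \mathcal{Y}.
\]
By Lemma \ref{BorelCT}, the set $\CT$ and the maps $E\mapsto m_{\pi(E)}(E)$ and $E\mapsto A_{f,E}$ are Borel on $\mathcal{Y}$, so $P_{q,N}$ is Borel. The root map $\pi:\mathcal{Y}\to X$, sending $E$ to the unique $x$ with $E^0=\{x\}$, is Borel by the Borel field structure (item (1) of the definition). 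Hence $U_{q,N}:=\pi[P_{q,N}]$ is $\Sigma^1_1$, and it is exactly the set of $x$ for which some coherent tree rooted at $x$ has mass $>N$ and average $>q$.

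Next we unwind the limsup. Read as a limit over the directed family of trees with mass tending to infinity,
\[
f^*(x) = \inf_{N}\ \sup\{A_{f,E}: E\in \CT_x,\ m_x(E)>N\}.
\]
For rational $q$ this gives
\[
\{x: f^*(x) > q\} = \bigcup_{r\in\Q,\ r>q}\ \bigcap_{N\in\N} U_{r,N}.
\]
For the inclusion $\supseteq$: if $x\in\bigcap_N U_{r,N}$, then every tail supremum is $>r$, so $f^*(x)\geq r>q$. For $\subseteq$: if $f^*(x)>q$, pick a rational $r$ with $q<r<f^*(x)$; then every tail supremum exceeds $r$, so $x\in U_{r,N}$ for all $N$. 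The right-hand side is a countable union of countable intersections of analytic sets, so it lies in $\sigma(\Sigma^1_1)$. Since the sets $\{f^*>q\}$ for $q\in\Q$ generate the Borel $\sigma$-algebra of $[-\infty,\infty]$, $f^*$ is $\sigma(\Sigma^1_1)$-measurable. For $f_*$ we either apply the same argument with the inequalities reversed, using $V_{q,N}=\pi[\{E\in\CT: m(E)>N,\ A_{f,E}<q\}]$, or simply note that $f_* = -(-f)^*$.

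We then handle two technical points. First, $\mathcal{Y}$, $\CT$ and the relevant maps are Borel only over the conull Borel set $X_0$ (or $X'$) from Remark \ref{universeofinterest}. The complement is a Borel null set, and we define $f^*$ arbitrarily there, say by $0$; the restriction to $X_0$ is $\sigma(\Sigma^1_1)$, so the whole function is too, and in particular it is $\mu$-measurable by universal measurability. Second, we must check that trees of arbitrarily large mass exist, so that the tail suprema are over nonempty sets. The complete trees $\triangleright^N_T\cdot x$ have mass $N+1$, so for $x\in X_0$ each $\CT_x$ contains trees of every mass bound.

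The main obstacle is justifying that the limsup "as $m_x(E_x)\nearrow\infty$" coincides with the inf-sup formula above and is not some sequential notion. We take this as the definition, and the sets $U_{r,N}$ then capture it exactly. A secondary point is confirming that $\pi$ restricted to $\mathcal{Y}$ (reading off the $X$ coordinate of $\mathcal{V}_n$) is Borel, which follows immediately from the product structure.
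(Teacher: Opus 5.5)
Your proof is correct and takes the same route as the paper: the superlevel sets of $f^*$ come from projecting the Borel sets cut out by $E\in\CT_x$, $m_x(E)>N$ and $A_{f,E}>r$ (Borel by Lemma \ref{BorelCT}), and $f_*$ is handled symmetrically. Your bookkeeping is also the more careful of the two. The paper projects $\bigcap_n\mathcal{A}_n$, which is empty because no single tree has mass exceeding every $n$, and its displayed equivalence in fact only characterizes $f^*\ge\alpha$; you instead intersect the projections $U_{r,N}$ over $N$ and take the union over rationals $r>q$, which gives exactly $\{f^*>q\}$.
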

\begin{proof}
    We work with $ f^* $ as the proof of $ f_* $ is similar. Let $ \alpha\in \mathbb{Q} $. We will show that $ \{x\in X:f^*(x)>\alpha\} $ is $ \sigma(\Sigma^1_1) $. Observe that
    \[
    f^*(x)>\alpha \Longleftrightarrow \forall n\in \mathbb{N},\exists E\in \CT_x,(m_x(E_x)>n\wedge A_{f,E_x}>\alpha) 
    \]
    Consider the following sets $ \mathcal{A}_n\subseteq X\times \mathcal{Y} $ where $ (z,E)\in \mathcal{A}_n $ if and only if each of the following is true:
    \begin{enumerate}
        \item $ E\in \CT_z $
        \item $ m_{\pi(E)}(E) > n $
        \item $ A_{f,E} > \alpha $
    \end{enumerate}
    Then $ \mathcal{A}_n $ is Borel. To see this, we show each condition is Borel.\\\\
    To see condition $ (1) $ is Borel, we consider the function $ \id\times \pi:X\times \mathcal{Y}\rightarrow X\times X $, which is Borel. The set of $ (z,E) $ for which $ \pi(E) = z $, is Borel as it is $ (\id\times \pi)^{-1} [\Delta] $ (where $ \Delta $ is the diagonal). Intersecting with $ X\times \CT $ completes this part.\\\\
    For conditions $ (2) $ and $ (3) $, the relevant functions were shown to be Borel measurable in \ref{BorelCT}.\\\\
    Then let $ \mathcal{A} = \bigcap_{n\in \N} \mathcal{A}_n \subseteq X\times \mathcal{Y} $, and is also Borel. Then note that
    \[
    \{x:f^*(x)>\alpha\} = \proj_X(\mathcal{A})
    \]
    and thus is analytic.
\end{proof}
\begin{lemma}\label{importantforinvariance}
    For every $ y\in X $ and $ \mu^1_y $-almost every $ z\in T^{-1}\{y\} $, $ f^\star(y) \geq f^\star (z) $.
\end{lemma}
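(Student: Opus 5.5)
Here $f^\star$ is read as the upper limit $f^*$ of the averages. The natural strategy is to show that, for fixed $y$, good trees behind $\mu^1_y$-typical $z\in T^{-1}\{y\}$ can be glued into a single tree rooted at $y$, and that the extra root mass $1$ at $y$ becomes negligible as masses go to infinity. I would argue by contradiction. Suppose the set $B=\{z\in T^{-1}\{y\}: f^*(z)>f^*(y)\}$ has $\mu^1_y(B)>0$. Since $f^*$ is $\sigma(\Sigma^1_1)$-measurable, hence universally measurable, $B$ is $\mu^1_y$-measurable. Shrinking $B$, choose rationals $\alpha<\beta$ and a set $B'\subseteq B$ of positive $\mu^1_y$-mass with $f^*(y)<\alpha$ and $f^*(z)>\beta$ on $B'$. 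I would also assume $f^*(y)<\infty$, or else there is nothing to prove, and $\abs{\int f\, d\mu^0_y}=\abs{f(y)}<\infty$.

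Next I would select witnesses measurably. For each $n$, consider the Borel set $\mathcal{A}_n\subseteq X\times\mathcal{Y}$ from the previous lemma, with threshold $\beta$ in place of $\alpha$. Its projection contains $B'$. By Theorem \ref{JVN} there is a $\sigma(\Sigma^1_1)$-measurable, hence $\mu^1_y$-measurable, uniformization $z\mapsto E_z^{(n)}\in\CT_z$ with $m_z(E^{(n)}_z)>n$ and $A_{f,E^{(n)}_z}>\beta$. A difficulty is that the lengths and masses vary with $z$. I would partition $B'$ into countably many pieces according to the length $\ell(E_z^{(n)})$ and the mass range, and pick one piece $B''$ of positive measure on which the length is a constant $L$. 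Then $z\mapsto E_z^{(n)}$ is a measurable map into $\mathcal{V}_L$, and by modifying on a null set it can be taken Borel.

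Now glue level by level. For each $j\le L$, apply the Gluing Lemma \ref{gluing}, or rather its obvious analogue with $\mu^{j}_z$ in place of $\mu^1_z$, which has the same proof via Proposition \ref{coherencelemma}, to the family $z\mapsto E_z^{(n),j}$ over $z\in B''$. This produces $F^{j+1}\subseteq T^{-(j+1)}\{y\}$ with
\[
\int_{F^{j+1}} f\,d\mu^{j+1}_y=\int_{B''}\int_{E^{(n),j}_z} f\,d\mu^j_z\,d\mu^1_y(z),
\qquad
\mu^{j+1}_y(F^{j+1})=\int_{B''}\mu^j_z(E^{(n),j}_z)\,d\mu^1_y(z).
\]
Set $F^0=\{y\}$ and $F^1=B''$. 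Coherence of $F=(F^0,\dots,F^{L+1})$ follows from the coherence of each $E_z$ and the disintegration identity. Positivity of the level masses holds because each $E_z$ has positive-mass levels. Summing over levels gives
\[
m_y(F)=1+\int_{B''} m_z(E_z)\,d\mu^1_y
\quad\text{and}\quad
\sum_j\int_{F^j} f=f(y)+\int_{B''}\sum_j\int_{E_z^j} f\,d\mu^1_y>f(y)+\beta\bigl(m_y(F)-1\bigr).
\]
Hence $A_{f,F}>\beta-(\abs{f(y)}+\abs{\beta})/m_y(F)$. Meanwhile $m_y(F)\ge 1+n\,\mu^1_y(B'')$.

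The step I expect to be the main obstacle is keeping $\mu^1_y(B'')$ bounded below uniformly in $n$, since $B''$ depends on $n$. Otherwise $m_y(F)\to\infty$ fails. I would handle this by not fixing the length but truncating: choose $L_n$ so that $\{z\in B':\ell(E_z^{(n)})\le L_n\}$ has measure at least $\mu^1_y(B')/2$. Trees of shorter length are padded by treating the missing levels as empty; formally I glue over each length class separately and take unions of levels, which remains coherent since the pieces live over disjoint subsets of $T^{-1}\{y\}$. Then $m_y(F_n)\ge 1+n\mu^1_y(B')/2\to\infty$, while $\liminf A_{f,F_n}\ge\beta>\alpha>f^*(y)$. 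This contradicts the definition of $f^*(y)$ as a limsup. A secondary technical point is checking that the partly-empty padded levels do not violate the positive-mass requirement; that requirement is only needed at the levels actually present, which the union guarantees.
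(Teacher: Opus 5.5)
Your proposal is correct and follows essentially the same route as the paper. Both argue by contradiction, select witness trees with Jankov--von Neumann, restrict to a set of positive $\mu^1_y$-mass where the witnesses have bounded length, glue level by level with the Gluing Lemma (in its $\mu^j_z$ form via Proposition \ref{coherencelemma}), and compute $m_y(F)=1+\int m_z(E_z)\,d\mu^1_y$ so that the root's contribution becomes negligible. Your choice of $L_n$ capturing at least $\mu^1_y(B')/2$ makes explicit the uniform lower bound on $m_y(F_n)$ that the paper's fixed-$C$ argument leaves implicit.
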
 
\begin{proof}
    Suppose not. Fix $ y $, and suppose for some $ \eta,\varepsilon>0 $, that for some (Borel) measurable $ A\subseteq T^{-1}\{y\} $ with $ \mu^1_y (A) = \varepsilon $, that for every $ z\in A $,
    \[
    f^*(y) + \eta < f^*(z)
    \]
    Since $ \mu^1_y $ is a probability measure, by neglecting a $ \mu^1_y $-null set, we may assume $ A $ is Borel. We may further assume that $ f(y) = 0 $ after normalizing.\\\\
    Because $ f^*(y)+\eta < f^*(z) $ for each $ z\in A $, for those points, there are trees of arbitrarily large mass that witness the averages being larger than $ f^*(y) + \frac{\eta}{2} $. Hence, we first fix $ C >0 $ which is a lower bound for the mass of said witnesses. This $ C $ will be elaborated later. \\\\
    With the following claim, we now restrict our attention to witnesses of uniformly bounded length.
    \begin{claim}
        For each $ n\in \mathbb{N} $, let
        \[
        A_n:=\{z\in A:\exists E_z\in \CT_z,\left(\ell(E_z)\leq n\wedge A_{f,E_z} > f^*(y) + \frac{\eta}{2}\wedge m_x(E_x) > C\right)\}
        \]
        Then each $ A_n $ is measurable and there exists some natural number $ n $ for which $ A_n $ is positive $ \mu^1_y $-measure.
    \end{claim}
    \noindent To summarize, $ A_n $ the set of points in $ A $ whose witness of weight $ > C $ is also of length at most $ n $.
    \begin{proof}
        (of claim) Fix $ n\in \mathbb{N} $, and let $ \mathcal{A}_n \subseteq X\times \mathcal{Y} $ be defined by $ (z,E)\in \mathcal{A}_n $ if and only if each of the following are true:
        \begin{enumerate}
            \item $ z\in A $
            \item $ E\in \CT_z $
            \item $ A_{f,E} >f^*(y) + \frac{\eta}{2} $
            \item $ \ell(E) \leq n $
            \item $ m_{\pi(E)} (E) > C$
        \end{enumerate}
        As before, conditions (1), (2), and (3)  are Borel. For last condition (4), 
        \[
        \{E\in \CT_z:\forall k\geq n,\mu^k_z(E^k) = 0\} = \bigcap_{k\geq n} \{E\in \CT_z: \mu^k_z(E^k) = 0\}
        \]
        and the function $ E\mapsto \mu^k_z(E^k) $ is Borel by \ref{cor1}.\\\\
        For condition (5), this follows from \ref{BorelCT}.\\\\
        Finally, $ A_n $ is the projection of $ \mathcal{A}_n $ onto the first coordinate and thus is analytic, hence $ \sigma(\Sigma^1_1) $-measurable.\\\\
        Since $ A = \bigcup_{n\in \mathbb{N}} A_n $ is positive $ \mu^1_y $-measure and this is an increasing union, by continuity of measure, there is an $ n $ such that $ \mu^1_y(A_n) > 0 $.
    \end{proof}
    \noindent We may restrict our attention to $ A_n $ obtained from the claim, and we relabel $ A $ with this $ A_n $. Hence, we may further suppose that for every $ z\in A $, there is a witnessing tree of length at most $ n $, $ E_z:=E^0_z \sqcup \ldots \sqcup E^n_z $ such that $ m_z(E_z) \geq C $ and also witnesses
    \[
    A_{f,E_z} > f^*(y)+\frac{\eta}{2}
    \]
    From here, since for each $ z\in A $, there exists a desired witness, we now show there is a measurable assignment of said $ z $ to their witnesses.
    \begin{claim}\label{invariance_claim}
        There exists a $ \sigma(\Sigma^1_1) $-measurable map $ z\mapsto E_z $ of witnessing trees.
    \end{claim}
    \begin{proof}
        Recall that $ \mathcal{A}_n\subseteq X\times \mathcal{Y} $ from the previous claim was Borel, and for each $ z\in A $, the $ z $-slice of $ \mathcal{A}_n $ is non-empty. By Jankov, von Neumann \ref{JVN}, there is a $ \sigma(\Sigma^1_1) $ uniformization $ z\mapsto E_z $.
    \end{proof}
    \noindent Then for each $ i\leq n $, the map $ z\mapsto E_z^i $ is $ \sigma(\Sigma^1_1) $-measurable. We let $ E^i $ be the measurable witness obtained from using the gluing lemma, $ \ref{gluing}  $, on the maps $ z\mapsto E_z^i $.\\\\ 
    Let $ E_y = \{y\}\sqcup A \sqcup E^1\sqcup\ldots \sqcup E^n $, then $ S $ is a coherent tree. Then from \ref{coherencelemma}, observe that for each $ 1\leq i \leq n $,
    \begin{align*}
        \mu^{i+1}_y(E^i) &= \int \mu^{i}_z(E^i)d\mu^1_y(z)
    \end{align*}
    Hence,
    \begin{align*}
        m_y(E_y) &= \mu^0_y(\{y\}) + \mu^1_y(A) + \int \sum_{i=1}^n \mu^i_z(E^i)d\mu^1_y(z)\\
        &= \mu^0_y(\{y\}) + \int \sum_{i=0}^n \mu^i_z(E^i)d\mu^1_y(z)\\
        &= \mu^0_y(\{y\}) + \int \sum_{i=0}^n \mu^i_z(E^i_z)d\mu^1_y(z)\\
        &= \mu^0_y(\{y\}) + \int m_z(E_z)d\mu^1_y(z)
    \end{align*}
    At the same time, for each $ 1\leq i \leq n $,
    \[
    \int_{E^i}f(z)d\mu^{i+1}_y(z) = \int \int_{E^i_z}f(w)d\mu^{i}_z(w)d\mu^1_y(z)
    \]
    Thus,
    \begin{align*}
        \int_{\{y\}}f(z)d\mu^0_y(z) + \int_A f(w)d\mu^1_y(w) + \sum_{i=1}^{n}\int_{E^i}f(w)d\mu^{i+1}_y(w) &= f(y) + \int_A f(w)d\mu^1_y(w) \\ &\quad + \sum_{i=1}^n \int \int_{E^i_z}f(w)d\mu^i_z(w)d\mu^1_y(z)\\
        &= f(y) + \int_A f(z)d\mu^1_y(z) \\&\quad + \int \sum_{i=1}^n  \int_{E^i_z}f(w)d\mu^i_z(w)d\mu^1_y(z)\\
        &= f(y) + \int_A \sum_{i=0}^n \int_{E^i_z}f(w)d\mu^i_z(w)d\mu^1_y(z)
    \end{align*}
    Combining the above two, we have
\begin{align*}
    \frac{f(y) + \int_A \sum_{i=0}^n \int_{E^i_z}f(w)d\mu^i_z(w)d\mu^1_y(z)}{1 + \int_A m_z(E_z)d\mu^1_y(z)} &\geq \frac{f(y) + \int_A (f^*(y)+\frac{\eta}{2})\cdot m_z(E_z)d\mu^1_y(z)}{1+\int_A m_z(E_z)d\mu^1_y(z)}\\
    &= \frac{f(y) + (f^*(y)+\frac{\eta}{2})\cdot \int_A m_z(E_z)d\mu^1_y(z)}{1+\int_A m_z(E_z)d\mu^1_y(z)}\\
    &= (f^*(y)+\frac{\eta}{2})\cdot\frac{ \int_A m_z(E_z)d\mu^1_y(z)}{1+\int_A m_z(E_z)d\mu^1_y(z)}\\
\end{align*}
Finally, as $ \lim_{t\to\infty} \frac{t}{1+t} = 1 $, we can take $ m_z(E_z) $ is sufficiently large in \ref{invariance_claim} so that the above quantity is at least $ f^*(y) + \frac{\eta}{4} $.
\end{proof}
\begin{lemma}\label{T(A)=A}
    If $ \mu(A)>0 $ and $ T[A]\subseteq A $, then $ [A]_T = A $ off a null set.
\end{lemma}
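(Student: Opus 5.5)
I read $[A]_T$ as the $T$-saturation of $A$, i.e.\ the union of the $E_T$-classes meeting $A$:
\[
[A]_T=\bigcup_{m,n\in\N}T^{-m}\big[T^n[A]\big].
\]
The plan is to prove the stronger-looking inclusion $[A]_T\subseteq\bigcup_{m}T^{-m}[A]$ and then show that the right-hand side differs from $A$ only by a null set. The only inputs are the hypothesis $T[A]\subseteq A$ and the measure-preserving property $\mu(T^{-1}[B])=\mu(B)$ for Borel $B$. I will assume $A\in\mathcal{B}$; if $A$ is only $\mu$-measurable, first replace it by a Borel set equal to it off a null set, and carry that null set along. None of the disintegration or Borel-field machinery is needed here.

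\emph{Step 1: one step of $T^{-1}$.} From $T[A]\subseteq A$ we get $A\subseteq T^{-1}[A]$. Since $T$ is measure preserving, $\mu(T^{-1}[A])=\mu(A)$, and $\mu(A)\le 1<\infty$. Therefore
\[
\mu\big(T^{-1}[A]\setminus A\big)=\mu(T^{-1}[A])-\mu(A)=0 .
\]
So $A$ is already $T$-invariant in the paper's sense, $\mu(A\triangle T^{-1}[A])=0$.

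\emph{Step 2: iterate.} Applying $T^{-m}$ to the inclusion $A\subseteq T^{-1}[A]$ gives an increasing chain
\[
A\subseteq T^{-1}[A]\subseteq T^{-2}[A]\subseteq\cdots .
\]
Each set $T^{-m}[A]$ is Borel, and $\mu(T^{-m}[A])=\mu(A)$ because $T^m$ is measure preserving. Hence $T^{-m}[A]\setminus A$ is null for every $m$. Put $A^\infty:=\bigcup_m T^{-m}[A]$. Then $A^\infty\setminus A$ is a countable union of null sets, so it is null.

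\emph{Step 3: control the forward images.} By induction on $n$, the hypothesis gives $T^n[A]\subseteq A$ for all $n$. Consequently
\[
T^{-m}\big[T^n[A]\big]\subseteq T^{-m}[A]\subseteq A^\infty .
\]
This yields $A\subseteq[A]_T\subseteq A^\infty$. Note that the forward images $T^n[A]$ may only be analytic, but we never need their measurability: $[A]_T$ is squeezed between the Borel set $A$ and the Borel set $A^\infty$, which differ by a null set. Since $\mu$ is complete, $[A]_T$ is measurable and $\mu([A]_T\triangle A)=0$.

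The only point requiring any care is this last one. A priori $T^n[A]$ need not be Borel, so $[A]_T$ is not obviously measurable. The sandwich between the Borel sets $A$ and $A^\infty$ together with completeness of $\mu$ resolves this. The argument also never uses $\mu(A)>0$; that hypothesis is presumably there only to make the lemma non-vacuous for the later ergodic decomposition.
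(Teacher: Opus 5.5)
Your proof is correct. It shares its first half with the paper's proof but finishes by a different mechanism.

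The paper's key observation is the same as your Step 3. Any $x\in[A]_T$ satisfies $T^a x=T^b y\in A$ for some $y\in A$, i.e.\ $x\in T^{-a}[A]$, and since $T[A]\subseteq A$ the orbit of $x$ stays in $A$ from time $a$ on. The paper then argues dynamically. Each point of $V=[A]_T\setminus A$ visits $V$ only finitely often, so $V$ is nowhere recurrent, and the appendix's Poincar\'e recurrence result (wandering sets are null) forces $\mu(V)=0$.

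You replace the recurrence step by a direct measure count. Since $A\subseteq T^{-1}[A]$ and $\mu(T^{-1}[A])=\mu(A)<\infty$, every set in the increasing chain $T^{-m}[A]$ differs from $A$ by a null set. This makes your route shorter and self-contained:
\begin{itemize}
\item it needs no recurrence theorem;
\item it avoids the passage from ``finitely many visits'' to nullity, which the appendix's at-least-one-return formulation does not supply verbatim;
\item your sandwich $A\subseteq[A]_T\subseteq\bigcup_m T^{-m}[A]$, together with completeness, settles the measurability of $[A]_T$ (hence of $V$), which the paper takes for granted.
\end{itemize}
What the recurrence route buys in exchange is that it uses only the nullity of wandering sets. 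It would therefore survive for conservative maps on infinite measure spaces, where your subtraction $\mu(T^{-1}[A])-\mu(A)$ is unavailable. Here $\mu$ is a probability measure, so nothing is lost.

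One small correction concerns your reduction to Borel $A$. It is unnecessary and, as phrased, slightly off: $T[B]\subseteq B$ need not hold for a Borel $B$ agreeing with $A$ off a null set. Instead, note that measure preservation passes to $\mu$-measurable sets: sandwich $A$ between Borel sets with null difference and pull back by $T$. After that, your Steps 1--3 apply verbatim to $A$ itself. This is the relevant case, since the sets $\{f^*\geq a\}$ in the invariance argument are only $\sigma(\Sigma^1_1)$-measurable.
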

\begin{proof}
    Let $ V = [A]_T\setminus A $. Then $ V $ is $ \mu $-nowhere $ T $-recurrent: i.e. for $ \mu $ almost every $ x\in V $, $ \{Tx,T^2x,\ldots\}\cap V = \emptyset $.\\\\
    Indeed, if $ x \in V$, then there are only finitely many $ n $ such that $ T^n (x)\in V $. To see this, note that for some $ a,b\in \N $, and $ y\in A $,
    \[
    T^a x = T^b y\in A 
    \]
    Then for every $ k>a $, $ T^k x\notin V $ as $ T[A]\subseteq A $. Thus, $ V $ is null.
\end{proof}
\begin{lemma}
    Let $ A = \{x\in X: f^*(x)\leq f^*(Tx)\} $. Then $ \mu(A) = 1 $.
\end{lemma}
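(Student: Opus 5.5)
We prove that $A = \{x\in X: f^*(x)\leq f^*(Tx)\}$ is conull by transferring Lemma \ref{importantforinvariance} from the fibers to the whole space through the disintegration of $\mu$ over $T$. First, $A$ is measurable. By the preceding lemma, $f^*$ is $\sigma(\Sigma^1_1)$-measurable, hence universally measurable, and $T$ is Borel. So $f^*\circ T$ is $\mu$-measurable: the preimage under the Borel map $T$ of a universally measurable set is universally measurable. Since the order relation $\{(a,b):a\leq b\}$ is closed, $A$ is the preimage of a Borel set under the measurable pair $x\mapsto (f^*(x),f^*(Tx))$, and therefore $A$ is $\mu$-measurable. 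If needed, we replace $A$ by a Borel set $A'$ with $\mu(A\triangle A')=0$ and apply Theorem \ref{disintegration} to $A'$ and its complement.

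Next we use the disintegration of $\mu$ over $T$. Theorem \ref{disintegration} gives
\[
\mu(X\setminus A) = \int_X \mu^1_y(X\setminus A)\, d\mu(y).
\]
For each $y$, the measure $\mu^1_y$ is supported on $T^{-1}\{y\}$. So, up to a $\mu^1_y$-null set,
\[
(X\setminus A)\cap T^{-1}\{y\} = \{z\in T^{-1}\{y\}: f^*(z) > f^*(y)\},
\]
because $Tz = y$ for every such $z$. Lemma \ref{importantforinvariance} says exactly that for every $y$ this set is $\mu^1_y$-null. The integrand therefore vanishes for every $y$ (or at least for $\mu$-a.e.\ $y\in X'$ from Remark \ref{universeofinterest}). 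Hence $\mu(X\setminus A)=0$.

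The main obstacle is measurability in this transfer. Theorem \ref{disintegration} is stated for Borel sets, so we must use that the completed disintegration also handles universally measurable sets. The paper notes this in the remark that the completion of the disintegrating measures disintegrates the completion of $\mu$. We first try the Borel approximation: choose Borel sets $B_1\subseteq X\setminus A\subseteq B_2$ with $\mu(B_2\setminus B_1)=0$. Applying the integral formula to $B_2\setminus B_1$ shows that $\mu^1_y(B_2\setminus B_1)=0$ for a.e.\ $y$. Then $\mu^1_y(B_1)\leq \mu^1_y(X\setminus A)=0$ for a.e.\ $y$, which gives $\mu(B_1)=0$ and hence $\mu(X\setminus A)=0$.

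A further subtlety is that Lemma \ref{importantforinvariance} was proved only for $y$ in the conull set where the Borel field structure is defined. This costs only a $\mu$-null set of $y$'s, which does not affect the integral.
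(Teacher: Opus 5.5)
Your proposal is correct and is essentially the paper's argument. Both disintegrate $\mu$ over $T$ and apply Lemma \ref{importantforinvariance} fiberwise. The paper writes $\mu(A)=\int \mu^1_{Tx}(A)\,d\mu(x)$ and observes $\mu^1_{Tx}(A)=1$, while you show directly that $\mu^1_y(X\setminus A)$ vanishes in the variable $y$. Your extra bookkeeping (universal measurability of $A$, and the inner Borel approximation $B_1\subseteq X\setminus A$ used to apply Theorem \ref{disintegration}) fills in a measurability step the paper leaves implicit.
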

\begin{proof}
    By measure disintegration, 
    \begin{align*}
        \mu(A) &= \int \mu^1_y (A)d\mu(y)\\
        &= \int \mu^1_{Tx}(A) d\mu(x)
    \end{align*}
    But by \ref{importantforinvariance}, for almost every $ z\in T^{-1}[\{Tx\}] $, $ f^*(z)\leq f^*(Tx) $. So $ A $ is co-null in $ T^{-1}[\{Tx\}] $, so $ \mu^1_{Tx}(A) = 1 $.
\end{proof}
\begin{proposition}\label{invariance}
    The function $ f^*$ is $ T $-invariant almost everywhere. That is, for almost every $ x $, $ f^*(x) = f^*(T(x)) $.
\end{proposition}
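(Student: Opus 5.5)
We want to upgrade the one-sided inequality of the preceding lemma, $f^*(x)\leq f^*(Tx)$ for $\mu$-a.e. $x$, to an equality almost everywhere. The plan is the standard argument that a measurable function which is a.e. non-decreasing along a measure-preserving map is a.e. invariant. The function $f^*$ may take the value $+\infty$ (or $-\infty$), and we do not yet know it is integrable. So instead of integrating $f^*$ directly, we work with sublevel sets.

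For each rational $\alpha$, set $B_\alpha=\{x: f^*(x)>\alpha\}$. This set is $\sigma(\Sigma^1_1)$-measurable by the measurability lemma for $f^*$, hence $\mu$-measurable since $\mu$ is complete. By the preceding lemma, off a null set $x\in B_\alpha$ implies $Tx\in B_\alpha$, that is, $B_\alpha\subseteq T^{-1}[B_\alpha]$ up to a null set. Since $T$ is measure-preserving, $\mu(T^{-1}[B_\alpha])=\mu(B_\alpha)$; this holds for measurable sets after replacing $B_\alpha$ by a Borel set differing from it by a null set. Combining the inclusion with equality of measures gives $\mu(T^{-1}[B_\alpha]\setminus B_\alpha)=0$, hence $\mu(B_\alpha\triangle T^{-1}[B_\alpha])=0$. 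Alternatively, one can phrase this through Lemma \ref{T(A)=A}, but the direct measure computation seems cleaner and avoids the $[A]_T$ notation.

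To finish, take the union over $\alpha\in\mathbb{Q}$ of the null sets $B_\alpha\triangle T^{-1}[B_\alpha]$; this is still null. Off this union, for every rational $\alpha$ we have $f^*(x)>\alpha$ if and only if $f^*(Tx)>\alpha$. This forces $f^*(x)=f^*(Tx)$ in $[-\infty,\infty]$, which proves the proposition.

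The main obstacle is measure-theoretic bookkeeping rather than dynamics. First, the preceding lemma as written defines $A=\{x:f^*(x)\le f^*(Tx)\}$, and its proof uses $\mu^1_{Tx}(A)$, which requires $A$ to be measurable. This holds because $A$ lies in the $\sigma$-algebra generated by $\sigma(\Sigma^1_1)$ sets and their $T$-preimages (a Borel preimage of a $\sigma(\Sigma^1_1)$ set is again $\sigma(\Sigma^1_1)$), so it is universally measurable and the disintegration applies to its completion. Second, the identity $\mu(T^{-1}[B])=\mu(B)$ must be justified for the universally measurable set $B_\alpha$ rather than a Borel set. I would handle this by choosing Borel sets $B'\subseteq B_\alpha\subseteq B''$ with $\mu(B''\setminus B')=0$ and noting that $T^{-1}$ preserves this sandwich, with $\mu(T^{-1}[B''\setminus B'])=0$.
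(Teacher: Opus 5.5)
Your proof is correct, and its skeleton is the paper's: rational level sets of $f^*$, forward invariance of each level set from the preceding lemma ($f^*(x)\le f^*(Tx)$ a.e.), an upgrade to invariance mod null, and a countable union over rationals.

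The difference is in the upgrade step.
\begin{itemize}
\item The paper phrases forward invariance with images, $T[X_a]\subseteq X_a$ a.e.\ for $X_a=\{f^*\ge a\}$. It then implicitly appeals to Lemma \ref{T(A)=A}, whose proof goes through the orbit saturation $[A]_T$ and Poincar\'e recurrence from the appendix.
\item You phrase it with preimages, $B_\alpha\subseteq T^{-1}[B_\alpha]$ mod null. You close it with $\mu(T^{-1}[B_\alpha])=\mu(B_\alpha)<\infty$, which gives $\mu(B_\alpha\triangle T^{-1}[B_\alpha])=0$ in one line.
\end{itemize}

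Your version is shorter and also more robust. Lemma \ref{T(A)=A} is stated for an exact inclusion $T[A]\subseteq A$, whereas only an a.e.\ inclusion is available. Forward images of null sets under a measure-preserving map need not be null, so the paper's route tacitly requires first shrinking to an exactly forward-invariant conull set, such as $\bigcap_{k\ge 0}T^{-k}[\{x:f^*(x)\le f^*(Tx)\}]$. Working with preimages makes that bookkeeping unnecessary.

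You also spell out two steps the paper leaves implicit:
\begin{itemize}
\item invariance of all rational level sets forces $f^*(x)=f^*(Tx)$ in $[-\infty,\infty]$;
\item measure preservation extends to the universally measurable sets $B_\alpha$ via a Borel sandwich.
\end{itemize}

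The recurrence route buys nothing extra that is needed here; both arguments ultimately rest on $\mu$ being finite.
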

\begin{proof}
    Note that for each $ a\in \Q $, then $ X_{a }:=\{x\in X:f^*(x)\geq a\} $ is $ T $-invariant modulo a null set. Indeed, $ T[X_{a}]\subseteq X_a $ (a.e.) since if $ x\in X_a $, then $ f^*(Tx)\geq f^*(x) $ since $ A $ is conull, so $ Tx\in X_a $.
\end{proof}
\section{Local-Global Bridge}
To simplify some notation in this section, given a measure preserving transformation $ T $ and $ f\in L^1(X,\mathcal{B},\mu) $, let 
\[
P_T(f)(x) = \int_{T^{-1}\{x\}} f(y)d\mu^1_x(y) 
\] 
Recall the following property of measure disintegration, from \ref{coherencelemma},
\[
\int \int_{T^{-1}\{x\}}f(y)d\mu^1_x(y)d\mu(x) = \int f d\mu(x)
\]
We will use this to streamline a handful of useful calculations.
\begin{lemma}
    \[
    P_{T^2}(f)(x) = P_T(P_T(f))(x) 
    \]
\end{lemma}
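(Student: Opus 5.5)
The identity reduces directly to Proposition \ref{coherencelemma} with $i=1$, namely that for almost every $x$ the measure $\mu^2_x$ is the mixture $\int_X \mu^1_y\, d\mu^1_x(y)$. Here $P_{T^2}(f)(x)=\int f\,d\mu^2_x$, since $\mu^2_x$ is the disintegration of $\mu$ over $T^2$ and is supported on $T^{-2}\{x\}$. By definition,
\[
P_T(P_T(f))(x)=\int_{T^{-1}\{x\}}\int_{T^{-1}\{y\}} f(z)\,d\mu^1_y(z)\,d\mu^1_x(y).
\]
So the claim is that integrating $f$ against the mixture equals the iterated integral. I will prove this for almost every $x$, which is the only sense in which it can hold, since the disintegrations are unique only a.e.

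First I restrict to the conull Borel set $X'$ of Remark \ref{universeofinterest}. On $X'$ the mixture identity holds for every Borel $A$, and $f$ is Borel. For $f=\mathbf{1}_A$ with $A\in\mathcal{B}$, the desired identity is exactly property (1) of that remark with $n=1$. Linearity then extends it to nonnegative simple Borel functions. For nonnegative Borel $f$, I take simple $s_k\nearrow f$ and apply monotone convergence three times: in $d\mu^2_x$, in the inner integral $d\mu^1_y$ for each $y$, and in the outer integral $d\mu^1_x$. This gives the identity in $[0,\infty]$ for every $x\in X'$. Measurability of $y\mapsto \int f\,d\mu^1_y$ on $X'$ comes from the same monotone limit, using that $y\mapsto\mu^1_y(A)$ is Borel a.e.

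For general $f\in L^1$, I split $f=f^+-f^-$. The step needing care is showing that both sides are finite, so that the subtraction is legitimate. I apply the nonnegative case to $|f|$ and integrate in $x$, using $\int \mu^2_x(\cdot)\,d\mu=\mu$:
\[
\int\int |f|\,d\mu^2_x\,d\mu(x)=\int |f|\,d\mu<\infty.
\]
Hence $\int|f|\,d\mu^2_x<\infty$ for almost every $x$, and by the identity for $|f|$ the iterated integral is also finite there. Discarding this null set and subtracting the identities for $f^+$ and $f^-$ gives $P_{T^2}(f)(x)=P_T(P_T(f))(x)$ for almost every $x$.

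The main obstacle is purely bookkeeping. The inner function $y\mapsto P_T(f)(y)$ is only defined and finite for $\mu^1_x$-almost every $y$. One must check that the exceptional set of $y$ is $\mu^1_x$-null for almost every $x$. This follows from the same integrated finiteness argument, applied with $\mu^1$ in place of $\mu^2$ and combined with the mixture identity.
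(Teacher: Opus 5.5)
Your proposal is correct and takes the same route as the paper: the paper writes out both sides and cites Proposition \ref{coherencelemma}, i.e.\ the mixture identity $\mu^2_x=\int_X \mu^1_y\,d\mu^1_x(y)$, to conclude they are equal. You additionally spell out what the paper leaves implicit: the extension from indicators through nonnegative Borel functions to $L^1$ via monotone convergence, and the check that both sides are finite for almost every $x$.
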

\begin{proof}
    We see
    \[
    P_{T^2}(f)(x) = \int_{T^{-2}\{x\}} f(z)d\mu^2_x(z)
    \]
    At the same time,
    \begin{align*}
        P_T(P_T(f))(x) &= \int_{T^{-1}\{x\}} P_T(f)(y)d\mu^1_x(y)\\
        &= \int_{T^{-1}\{x\}}\int_{T^{-1}\{y\}} f(z)d\mu^1_y(z)d\mu^1_x(y)
    \end{align*}
    But, by \ref{coherencelemma}, the two quantities are indeed equal.
\end{proof}
\noindent An inductive argument gives 
\begin{equation}
    P_{T^n}(f) = P_T(P_{T^{n-1}}(f))
\end{equation}
and we observe
\begin{equation} 
    \int P_{T^n}(f)(x)d\mu(x) = \int fd\mu
\end{equation}
\begin{lemma}
    For any $ f\in L^1(X,\B,\mu) $,
    \[
    \norm{P_T(f)}_1 \leq \norm{f}_1
    \]
\end{lemma}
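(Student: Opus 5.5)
The plan is to push the absolute value inside the fiber integral and then integrate against the disintegration identity. Pointwise, for almost every $x$ we have $f\in L^1(\mu^1_x)$, and then
\[
\abs{P_T(f)(x)} = \abs{\int_{T^{-1}\{x\}} f(y)\,d\mu^1_x(y)} \leq \int_{T^{-1}\{x\}} \abs{f(y)}\,d\mu^1_x(y) = P_T(\abs{f})(x).
\]
This is just the triangle inequality for integrals against the probability measure $\mu^1_x$.

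Next I would integrate over $X$ with respect to $\mu$. By the disintegration theorem \ref{disintegration}, applied to $\abs{f}\in L^1(X,\mathcal{B},\mu)$, the map $x\mapsto \int \abs{f}\,d\mu^1_x$ is integrable. The same identity gives
\[
\int P_T(\abs{f})\,d\mu = \int\int \abs{f(y)}\,d\mu^1_x(y)\,d\mu(x) = \int \abs{f}\,d\mu = \norm{f}_1 .
\]
Combining the two displays yields $\norm{P_T(f)}_1 \leq \int P_T(\abs{f})\,d\mu = \norm{f}_1$.

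The only point requiring care is measurability of $P_T(f)$, so that $\norm{P_T(f)}_1$ is meaningful. I would handle it as in the remark following \ref{disintegration}. First, for a Borel set $E$, $x\mapsto \mu^1_x(E)$ agrees almost everywhere with a Borel function. This extends to simple functions by linearity, and then to $f$ via simple $s_n\to f$ with $\abs{s_n}\leq\abs{f}$, using dominated convergence in each fiber for almost every $x$, exactly as in part (2) of Lemma \ref{BorelCT}.

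Alternatively, one may restrict to the conull set $X'$ of Remark \ref{universeofinterest}, on which $f$ is Borel. I expect this measurability bookkeeping to be the only mild obstacle; the inequality itself is immediate.
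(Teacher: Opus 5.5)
Your proposal is correct and follows the paper's own proof: apply the triangle inequality in each fiber, $\abs{P_T(f)(x)}\leq \int \abs{f}\,d\mu^1_x$, then integrate against $\mu$ using the disintegration identity to get $\norm{f}_1$. Your extra measurability remark about $P_T(f)$ is a harmless addition that the paper leaves implicit.
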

\begin{proof}
    We see by measure disintegration,
    \begin{align*}
        \int_X \abs{P_T(f)(x)}d\mu(x) &= \int_X \abs{\int_{T^{-1}\{x\}} f(y)d\mu^1_x(y)}d\mu(x)\\
        &\leq \int_X \int_{T^{-1}\{x\}} \abs{f(y)}d\mu_x^1(y) d\mu(x)\\
        & = \int_X |f(x)|d\mu(x)\\
        &= \norm{f}_1
    \end{align*}
    as desired.
\end{proof}
\begin{lemma}
    For every $ f\in L^1(X,\B,\mu) $, and $ n\in \N $, and almost every $ x\in X $,
    \[
    \sum_{i=0}^n \int_{T^{-i}\{x\}}\abs{f(y)}d\mu^i_x(y) <\infty
    \]
\end{lemma}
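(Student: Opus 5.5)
The plan is to reduce the statement to the integral identity for the operators $P_{T^i}$ already established above. Each summand is $P_{T^i}(\abs{f})(x)$, since by definition $\int_{T^{-i}\{x\}}\abs{f(y)}d\mu^i_x(y) = P_{T^i}(\abs{f})(x)$. For $i=0$ this is just $\abs{f(x)}$, as $\mu^0_x=\delta_x$. So the quantity in question is the finite sum
\[
S_n(x) := \sum_{i=0}^n P_{T^i}(\abs{f})(x),
\]
and it suffices to show that each term is finite for almost every $x$. A finite intersection of conull sets is conull; in fact, taking the intersection over all $n$ at once even gives a single conull set that works for every $n$.

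The key step is to show that each $P_{T^i}(\abs{f})$ is integrable. Since $\abs{f}\in L^1(X,\B,\mu)$ is nonnegative, the disintegration identity (the consequence of Theorem \ref{disintegration} applied to $T^i$, namely $\int\int g\,d\mu^i_x\,d\mu(x) = \int g\,d\mu$) gives
\[
\int_X P_{T^i}(\abs{f})(x)\,d\mu(x) = \int_X \abs{f}\,d\mu = \norm{f}_1 < \infty .
\]
Alternatively, one can iterate the contraction $\norm{P_T(g)}_1\leq\norm{g}_1$ using $P_{T^i}=P_T\circ P_{T^{i-1}}$. Here positivity makes $P_T$ preserve nonnegativity, so absolute values pose no issue. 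A nonnegative function with finite integral is finite almost everywhere, so $P_{T^i}(\abs{f})(x)<\infty$ for $\mu$-a.e.\ $x$. Summing then gives $\int S_n\,d\mu = (n+1)\norm{f}_1<\infty$, and hence $S_n(x)<\infty$ almost everywhere.

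The only delicate point, and the step I expect to be the main obstacle, is measurability. Writing $\int P_{T^i}(\abs f)\,d\mu$ requires $x\mapsto \int\abs f\,d\mu^i_x$ to be measurable, possibly after modification on a null set. This is supplied by the disintegration theorem: for Borel $E$, the map $x\mapsto\mu^i_x(E)$ is integrable. One approximates $\abs{f}$, which is Borel on the conull set $X'$ of Remark \ref{universeofinterest}, from below by Borel simple functions and applies monotone convergence, exactly as in the proof of Lemma \ref{BorelCT}(2). I would also note that restricting to $X'$ is harmless, since the measures $\mu^i_x$ for a.e.\ $x$ give $X\setminus X'$ measure zero; this follows from $\int\mu^i_x(X\setminus X')\,d\mu = \mu(X\setminus X')=0$.
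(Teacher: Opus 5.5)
Your proposal is correct and matches the paper's proof. The paper also identifies each summand with $P_{T^i}(\abs{f})(x)$, notes that this function is in $L^1$, and concludes it is finite almost everywhere; your remarks on measurability and on discarding the null set $X\setminus X'$ fill in details the paper leaves implicit.
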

\begin{proof}
    Note that for each $ i $,
    \[
    \int_{T^{-i}\{x\}}\abs{f(y)}d\mu^i_x = P_{T_i}(\abs{f})(x)
    \]
    and the above is in $ L^1 $, and hence finite a.e.
\end{proof}
\begin{lemma}\label{localglobal}
    Let $ N\in \N $, with $ f\in L^1(X,\B,\mu) $. Then
    \[
    \int_X fd\mu = \int \frac{1}{N+1}\sum_{i=0}^N \int_{T^{-i}\{x\}} f(y)d\mu^i_x(y) d\mu(x)
    \]
\end{lemma}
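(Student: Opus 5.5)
The plan is to reduce the statement to the identity $\int P_{T^n}(f)\,d\mu = \int f\,d\mu$ established just above, together with linearity of the integral.

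First, for each $i\geq 1$ the inner integral $\int_{T^{-i}\{x\}} f(y)\,d\mu^i_x(y)$ is exactly $P_{T^i}(f)(x)$. Since $\mu^i_x$ is supported on $T^{-i}\{x\}$, restricting the domain to $T^{-i}\{x\}$ changes nothing. For $i=0$ we use the convention $\mu^0_x=\delta_x$, so the term is simply $f(x)$, and we set $P_{T^0}(f)=f$. The integrand on the right-hand side is therefore
\[
x\mapsto \frac{1}{N+1}\sum_{i=0}^N P_{T^i}(f)(x).
\]
This is a finite average of functions that are integrable and measurable almost everywhere. Integrability follows from the contraction estimate $\norm{P_T(f)}_1\leq\norm{f}_1$ iterated via $P_{T^n}(f)=P_T(P_{T^{n-1}}(f))$, or directly from $\norm{P_{T^i}(f)}_1\leq \norm{f}_1$ by the same disintegration argument applied to $T^i$. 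The lemma just before shows that the sum is finite almost everywhere, so the integrand is well defined off a null set.

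Next, by linearity we interchange the finite sum with the outer integral:
\[
\int \frac{1}{N+1}\sum_{i=0}^N P_{T^i}(f)\,d\mu=\frac{1}{N+1}\sum_{i=0}^N \int P_{T^i}(f)\,d\mu .
\]
For each $i\geq1$, the disintegration theorem (Theorem~\ref{disintegration}, in its integrated form for $L^1$ functions applied to $T^i$) gives $\int\int f\,d\mu^i_x\,d\mu(x)=\int f\,d\mu$, that is, $\int P_{T^i}(f)\,d\mu=\int f\,d\mu$. For $i=0$ this is trivial. Summing $N+1$ equal terms and dividing by $N+1$ yields $\int_X f\,d\mu$.

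The only step needing any care is measurability of $x\mapsto \int f\,d\mu^i_x$. Theorem~\ref{disintegration} provides it for indicators of Borel sets, and one extends to $f\in L^1$ by simple-function approximation and dominated convergence, exactly as in Lemma~\ref{BorelCT}(2). The paper already records this extension right after Theorem~\ref{disintegration}, so this step is routine rather than a genuine obstacle.
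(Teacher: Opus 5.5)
Your proposal is correct and follows the paper's proof. Both arguments rewrite each inner integral as $P_{T^i}(f)(x)$ (with the $i=0$ term equal to $f$), pull the finite sum out of the outer integral by linearity, and then use $\int P_{T^i}(f)\,d\mu=\int f\,d\mu$ from the disintegration of $T^i$; your added remarks on integrability and a.e.\ measurability of $x\mapsto\int f\,d\mu^i_x$ only make explicit what the paper leaves implicit.
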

\begin{proof}
    \begin{align*}
        \int \frac{1}{N+1}\int_{\bigsqcup_{i=0}^N T^{-i}\{x\}} f(y)d\mu^i_x(y) d\mu(x)&= \int \frac{1}{N+1} \int_{T^{-i}\{x\}} f(y)d\mu^i_x(y)d\mu(x)\\
        &= \int_X \frac{1}{N+1}\sum_{i=0}^N P_{T^{i}}(f)(x)d\mu(x)\\
        &= \frac{1}{N+1}\sum_{i=0}^N \int P_{T^i}(f)(x)d\mu(x)\\
        &= \int fd\mu(x)
    \end{align*}
    as desired.
\end{proof}
\begin{corollary}
    With the same setting as above, if $ A $ is $ T $-invariant, then
    \[
    \int_A f d\mu = \int_A \frac{1}{N+1} \sum_{i=0}^N \int_{T^{-1}\{x\}} f(y)d\mu_x^i(y)d\mu(x) 
    \]
\end{corollary}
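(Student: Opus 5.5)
The corollary is obtained by applying Lemma \ref{localglobal} to the function $f\cdot \mathbf{1}_A$ in place of $f$. This function is in $L^1(X,\B,\mu)$, since $|f\mathbf{1}_A|\le |f|$. (The inner integral in the statement should read $\int_{T^{-i}\{x\}}$; this is evidently intended.) Lemma \ref{localglobal} then gives
\[
\int_A f\,d\mu=\int_X \frac{1}{N+1}\sum_{i=0}^N \int_{T^{-i}\{x\}} f(y)\mathbf{1}_A(y)\,d\mu^i_x(y)\,d\mu(x),
\]
so it remains to identify the right-hand side with the integral in the statement.

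The key step is the following claim: for $\mu$-a.e.\ $x$ and each $i\le N$, the identity $\mathbf{1}_A(y)=\mathbf{1}_A(x)$ holds for $\mu^i_x$-a.e.\ $y\in T^{-i}\{x\}$. Since $A$ is $T$-invariant, the set $A\triangle T^{-i}[A]$ is $\mu$-null; this follows by induction on $i$, using that $T$ is measure preserving. By the disintegration theorem \ref{disintegration} applied to $T^i$,
\[
0=\mu(A\triangle T^{-i}[A])=\int \mu^i_x(A\triangle T^{-i}[A])\,d\mu(x),
\]
so $\mu^i_x(A\triangle T^{-i}[A])=0$ for a.e.\ $x$.

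Now $\mu^i_x$ is supported on $T^{-i}\{x\}$, and on that fibre $T^{-i}[A]$ is either the whole fibre (when $x\in A$) or empty (when $x\notin A$). Therefore $\mu^i_x(A)=\mathbf{1}_A(x)$ and $\mu^i_x$-a.e.\ $y$ satisfies $\mathbf{1}_A(y)=\mathbf{1}_A(x)$. Intersecting these conull sets of $x$ over the finitely many $i\le N$, we obtain, for a.e.\ $x$,
\[
\int_{T^{-i}\{x\}} f\mathbf{1}_A\,d\mu^i_x=\mathbf{1}_A(x)\int_{T^{-i}\{x\}} f\,d\mu^i_x .
\]

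Substituting this into the displayed identity turns the outer integral over $X$ into an integral over $A$ of the unmodified averages, which is exactly the claim. The only point requiring care is that the fibrewise exceptional sets are null for a.e.\ $x$, and this is handled by the disintegration identity above. Integrability of $x\mapsto \int_{T^{-i}\{x\}}|f|\,d\mu^i_x$ has already been established in the preceding lemmas, so exchanging the sum and the integral raises no issue.
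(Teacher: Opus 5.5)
Your proposal is correct and matches the paper's intended argument. The paper states this corollary without proof, as an immediate consequence of Lemma \ref{localglobal} applied to $f\mathbf{1}_A$ together with the $T$-invariance of $A$. Your fibrewise check that $\mu^i_x(A)=\mathbf{1}_A(x)$ for a.e.\ $x$, obtained by disintegrating $\mu(A\triangle T^{-i}[A])=0$, supplies exactly the detail the paper leaves implicit, and you are right that the inner integral should range over $T^{-i}\{x\}$.
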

\begin{corollary}\label{Lp-contraction}
    For any $ f\in L^p(X,\mathcal{B},\mu) $, 
    \[
    \norm{P_T(f)}_p \leq \norm{f}_p
    \]
\end{corollary}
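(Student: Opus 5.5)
The plan is to prove the pointwise inequality $\abs{P_T(f)(x)}^p \leq P_T(\abs{f}^p)(x)$ for almost every $x$, and then integrate it using the measure-preservation identity $\int P_T(g)\,d\mu = \int g\,d\mu$ recorded at the start of this section. This mirrors the $L^1$ contraction lemma proved just above, with Jensen's inequality replacing the triangle inequality.

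The case $p=\infty$ is immediate. If $\abs{f}\leq \norm{f}_\infty$ off a $\mu$-null set $N$, then by the disintegration theorem $\int \mu^1_x(N)\,d\mu(x) = \mu(N) = 0$, so $\mu^1_x(N)=0$ for almost every $x$. Hence $\abs{P_T(f)(x)} \leq \int \abs{f}\,d\mu^1_x \leq \norm{f}_\infty$ for almost every $x$.

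Now take $1\leq p<\infty$ and $f\in L^p$. Since $\mu$ is a probability measure, $f\in L^1$, so $f\in L^1(\mu^1_x)$ for almost every $x$ and $P_T(f)(x)$ is defined there. For such $x$, $\mu^1_x$ is a probability measure, and $t\mapsto \abs{t}^p$ is convex. Jensen's inequality therefore gives
\[
\abs{P_T(f)(x)}^p = \abs{\int_{T^{-1}\{x\}} f\,d\mu^1_x}^p \leq \int_{T^{-1}\{x\}} \abs{f}^p\,d\mu^1_x = P_T(\abs{f}^p)(x).
\]
This holds even when the right-hand side is infinite. Because $\abs{f}^p\in L^1$, the disintegration identity (equation (1) of the preliminaries) applied to $\abs{f}^p$ gives $\int P_T(\abs{f}^p)\,d\mu = \int \abs{f}^p\,d\mu$. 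Integrating the pointwise inequality then yields $\norm{P_T(f)}_p^p \leq \norm{f}_p^p$, and taking $p$-th roots finishes the proof.

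The only point needing care is that $x\mapsto P_T(f)(x)$ is measurable, so that its $L^p$ norm makes sense. I would get this from the disintegration theorem's integrability statement, approximating by simple functions exactly as in the proof of Lemma \ref{BorelCT}(2), together with the fact that all the exceptional sets above are null. I do not expect a genuine obstacle here. The rest is routine.
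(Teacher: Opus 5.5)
Your proposal is correct and is essentially the paper's proof: apply Jensen's inequality pointwise to the probability measures $\mu^1_x$ to get $\abs{P_T(f)(x)}^p \leq P_T(\abs{f}^p)(x)$, then integrate using the disintegration identity for $\abs{f}^p$. Your separate treatment of $p=\infty$ and your remark on the measurability of $P_T(f)$ are sound additions that the paper leaves implicit.
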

\begin{proof}
    \begin{align*}
        \int_X \abs{P_T(f)(x)}^p d\mu(x) = \int_X \abs{\int_{T^{-1}(x)} f(y)d\mu_x^1(y)}^p d\mu(x)
    \end{align*}
    By Jensen's inequality, we have that
    \[
    \int_X \abs{\int_{T^{-1}(x)} f(y)d\mu_x^1(y)}^p d\mu(x) \leq \int_X \int_{T^{-1}(x)} \abs{f(y)}^pd\mu_x^1(y)d\mu(x) = \int_X \abs{f(x)}^pd\mu(x)
    \]
    as desired.
\end{proof}
\section{Main Theorems}
\subsection{Proof of the Backward Ergodic Theorem}
\begin{theorem}
    \label{The Theorem}(Backward Ergodic Theorem for arbitrary measure preserving transformations) Let $ X $ be a standard probability space, let $ T:X\rightarrow X $ be a measure-preserving transformation. Then for every $ f\in L^1(X) $, and for almost every $x \in X $, $ \abs{A_{f,E^x}} <\infty $ and the limit
    \[
    \lim_{m_x(E_x)\nearrow \infty} A_{f,E_x} 
    \]
    exists and equals $ \overline{f}= \mathbb{E}(f|\mathcal{B}_{\mathcal{I}}) $, the conditional expectation of $ f $ with respect to the $ \sigma $-algebra of $ T $-invariant subsets, where $ E_x$ varies over the set of coherent trees rooted at $x $.
\end{theorem}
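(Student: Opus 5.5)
We follow the Tserunyan--Zomback strategy: argue by contradiction, glue bad witnessing trees into one tree that fills most of the complete tree $\triangleright_T^N\cdot x$, and contradict the local-global identity of Lemma \ref{localglobal}.

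\textbf{Reductions.} Finiteness of $A_{f,E_x}$ for a.e. $x$ follows from the lemma bounding $\sum_{i\le n}\int|f|\,d\mu^i_x$ a.e., together with $m_x(E_x)\ge 1$. By symmetry ($f\mapsto -f$) it suffices to prove $f^*\le \overline f$ a.e.; then $f_*\ge\overline f$ and the limit exists. By Proposition \ref{invariance}, $f^*$ is $T$-invariant, so $\{f^*>\overline f+\varepsilon\}$ can be written, up to countable unions over rationals, as a union of $T$-invariant sets of the form $B=\{f^*>a+\varepsilon\}\cap\{\overline f<a\}$. Suppose toward a contradiction that such a $B$ has $\mu(B)>0$. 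Replacing $f$ by $(f-a)\mathbf 1_B$-type localizations (using that $B$ is invariant and the corollary of Lemma \ref{localglobal} for invariant sets), we reduce to: $\int_B f\,d\mu\le 0$ but $f^*>\varepsilon$ on $B$. We also truncate, assuming first $f$ is bounded, and handle general $f\in L^1$ at the end by approximation using the $L^1$-contraction of $P_T$ and a maximal-inequality style bound (the Backward Maximal Ergodic Theorem argument proceeds in parallel).

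\textbf{Tiling.} For $z\in B$ choose, as in Claim \ref{invariance_claim}, witnessing trees $E_z\in\CT_z$ with $A_{f,E_z}>\varepsilon$. Exactly as in the first claim of Lemma \ref{importantforinvariance}, we may restrict to witnesses of length $\le L$ on a subset $B_L\subseteq B$ with $\mu(B\setminus B_L)<\delta$. The map $z\mapsto E_z$ is chosen $\sigma(\Sigma^1_1)$-measurably by Jankov--von Neumann (Theorem \ref{JVN}). Fix $N\gg L$. For $x\in B$ we build a tree inside $\triangleright^N_T\cdot x$ greedily, level by level. Let $R^0=\{x\}$. At level $i$, the points of the uncovered part $R^i$ lying in $B_L$ become roots: we apply the Gluing Lemma \ref{gluing} to the levels of their witnesses $E_z^j$, pushed up to levels $i+j$ of $x$'s tree, and remove them from the later levels. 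Points not in $B_L$ are left as singletons. Coherence (Proposition \ref{coherencelemma}) gives that mass and integrals of $f$ are additive over glued pieces, computed as in the computation ending the proof of Lemma \ref{importantforinvariance}.

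\textbf{Contradiction.} Integrating over $x$ via Lemma \ref{localglobal} gives two pieces of information. The glued witnesses contribute at least $\varepsilon$ times their mass. The uncovered part has $\mu^i_x$-mass controlled by $\mu(B\setminus B_L)$ in $\mu$-average, since $\int\mu^i_x(S)\,d\mu=\mu(S)$. The truncated top $L$ levels cost at most $L\|f\|_\infty/(N+1)$. Hence
\[
\int_B f\,d\mu\ \ge\ \varepsilon\mu(B)-O(\delta\|f\|_\infty)-O(L\|f\|_\infty/N)>0,
\]
a contradiction.

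\textbf{Main obstacle.} The hard step is making the tiling measurable. The "uncovered remainder" at each level must itself be an element of $V_i$, depending Borel-measurably on $x$, so that the Gluing Lemma applies iteratively. Witnesses rooted at different points of the same level must also not overlap in a non-null way; this holds since preimage fibers of distinct points are disjoint. I would first try to handle this using Lemmas \ref{first lem}, \ref{second lem} and \ref{third lem} (Boolean operations and $T^{-1}$ being Borel on the fields), checking that each stage stays within the measurable structure.
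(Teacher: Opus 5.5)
Your proposal is correct in outline. Its core is the paper's: Jankov--von Neumann witnesses of bounded length, greedy gluing inside $\triangleright^N_T\cdot x$, and integration via Lemma \ref{localglobal}. What differs is how you organize the reduction.

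The paper handles arbitrary $f\in L^1$ in one pass. After subtracting $\overline f$ and restricting to $\{f^*>0\}$, it uses $g=\min\{f^*/2,1\}$ to create a positive margin. It controls the unbounded negative part of $f$ through $C=f^{-1}[-M,\infty)$ and absolute continuity. The tiling is then only asked to cover a $(1-\varepsilon)$-fraction of the complete tree for $x$ in a large set $X'$.

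You instead get a uniform margin from the invariant sets $B=\{f^*>a+\varepsilon\}\cap\{\overline f<a\}$, take $f$ bounded, and integrate the uncovered mass $\sum_{i\le N}\mu^i_x(B\setminus B_L)$ directly via $\int\mu^i_x(S)\,d\mu=\mu(S)$. This is cleaner. It also makes your ``main obstacle'' milder than you fear: the resulting per-$x$ inequality is between functions already measurable in $x$, so the tiling only has to exist fiberwise, not depend measurably on $x$.

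The cost is the passage to general $f$. That Banach-principle step is not driven by the $L^1$-contraction of $P_T$, which only sees complete trees. It is driven by the weak-type bound $\lambda\,\mu(\{h^\star>\lambda\})\le\norm{h}_1$ for $h=\abs{f-f_k}$, where $h^\star(x)=\sup_{E\in\CT_x}A_{h,E}$. You combine it with $f^*\le f_k^*+h^\star$ and $\norm{\overline{f_k}-\overline f}_1\le\norm{f_k-f}_1$.

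You can get this bound in either of two ways. You may cite the paper's Backward Maximal Ergodic Theorem, whose proof does not use this theorem. Or you can derive it from your own tiling. For $h\ge 0$, keep only the glued witnesses. Every point at level $\le N-L$ that admits a witness of length $\le L$ (call this set $Y_L$) lies in one of them. This gives $(N+1)\norm{h}_1\ge\lambda(N-L+1)\mu(Y_L)$; let $N\to\infty$ and then $L\to\infty$.

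So your route trades the $g$/$C$ bookkeeping for one extra, easy maximal lemma. The paper's route is self-contained for $L^1$ but needs more delicate estimates.
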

\begin{proof} 
    We will replace $ X $ with the Borel $ X_0 $ established in \ref{universeofinterest}. Then, by subtracting $ \overline{f} $ from $ f $, we may assume $ \overline{f} = 0 $. We showed earlier in \ref{invariance} that $ f^* $ and $ f_* $ (the limsup and liminf respective) were $ T $-invariant almost everywhere. As such, it suffices to show that $ f^* = f_* = 0 $. To this end, it suffices to show that $ f^*\leq 0 $, and the proof for $ f_*\geq 0 $ is similar.\\\\
    For the sake of contradiction, suppose $ f^*>0 $ on some positive measure ($ T $-invariant) set. By restricting our attention to this set, we may assume that $ f^*>0 $ on all of $ X $. Let $ g = \min\{\frac{f^*}{2},1\} $, then $ g $ is bounded and integrable. Let $ c = \int gd\mu>0 $. As with \ref{importantforinvariance}, Jankov, von Neumann Uniformization ensures there is a measurable map $ X\to \mathcal{Y} $, $ x\mapsto E_x $ for which $ A_{f-g,E_x} > 0 $ for each $ x $. Then let $ \delta>0 $ be small such that whenever $ B\subseteq X $ has measure less than $ \delta $, then $ \int_B f-g d\mu >-\frac{c}{2} $. Let $ M $ be large so that $C:=f^{-1}[-M,\infty) $ has measure at least $ 1-\delta $. 
    \begin{claim}\label{tiling}
        For any $ \varepsilon>0 $, there exists $ N\in \N $ and subset $ X'\subset X $ of measure $ \geq 1-\varepsilon $ such that for all $ x\in X' $, there exists a coherent subtree $ S $ of complete tree $ \triangleright_T^N\cdot x $ so that $ m_x(S')\geq (1-\varepsilon)m_x(\triangleright_T^N\cdot x) $ and with $ A_{f-g,S} >0 $. 
    \end{claim}
    \begin{proof}
        Let $ \varepsilon>0 $, and let $ L $ be large such that 
        \[
        \mu(\{x\in X:\ell(E_x)\geq L\}) < \frac{\varepsilon^2}{2}
        \]
        Note that this set is indeed measurable since the map $ x\mapsto E_x $ is measurable, and the map $ \ell:\mathcal{Y}\rightarrow \mathbb{N} $ where $ \ell(E_x) $ is the length of $ x $ is Borel.\\\\
        Let $ D = \{x\in X:\ell(E_x)\geq L\}$. Then, let $ N $ be large enough so that $ \frac{L}{N}<\frac{\varepsilon}{2} $. \\\\
        Then the set
        \[
        X':=\{x\in X:A_{1_D,\triangleright^N_T\cdot x}\geq \frac{\varepsilon}{2}\}
        \]
        has measure less than $ \varepsilon $. To see this, 
        \begin{align*}
            \mu(D) &= \int_X 1_{D} d\mu\\
            &= \int_X A_{1_{D},\triangleright^N_T\cdot x}d\mu\\
            &\geq \int_{X'} A_{1_{D},\triangleright^N_T\cdot x} d\mu \\
            &\geq \mu(X')\cdot \frac{\varepsilon}{2}
        \end{align*}
        Since $ \mu(D) <\frac{\varepsilon^2}{2} $, $ \mu(X') < \varepsilon $, as desired.\\\\
        We claim $ X' $ is the desired set. We proceed by an inductive `greedy' argument on $ n\leq N-L $. 
        \begin{itemize}
            \item Let $ S^0:= E_x $. Let $ Q_0 = T^{-1}\{x\}\setminus E_x^1 $. Then $ Q_0 $ is Borel. \\\\
            Here, we are considering the portion of $ T^{-1}\{x\} $ not already covered by $ S^0 $, and in the next step, we will attach witnesses for those points to have $ T^{-1}\{x\}$ be entirely included in the final $ S $.
            \item For each $ y \in Q_0 $, we have $ E_y $, each disjoint from each other, and also disjoint from $ E^x $. Let $ S^1 = S^0\cup \bigcup_{y\in Q_0}E_y $. \\\\
            By the gluing lemma, \ref{gluing}, $ S^1 $ is still a coherent measurable tree. Let $ Q_1 = T^{-2}\{x\}\setminus S^1_2 $, which is also measurable.\\\\
            Again, we consider the portion of $ T^{-2}\{x\} $ not already covered by $ S^1 $, and we repeat the process inductively.
            \item Given $ Q_n $ measurable, and $ S^n $ a measurable coherent tree, let $ S^{n+1} = S^n\cup \bigcup_{y\in Q_n}E_y $ and let $ Q_{n+1} = T^{-(n+1)}\{x\}\setminus S^{n+1}_{n+2} $. Again, by the Gluing lemma, \ref{gluing}, $ S^{n+1} $ and $ Q_{n+1} $ are all measurable.
        \end{itemize}
        Let $ S $ be the coherent tree obtained after this process, which is measurable. Furthermore, we have at most $ \triangleright^n_T\cdot x\setminus \triangleright_T^{N-L}\cdot x $ untiled, as desired.
    \end{proof}
    \begin{claim}\label{touse}
        For each $ x\in X' $ above,
        \[
        A_{1_C\cdot (f-g),\triangleright_T^n\cdot x} \geq -(M+1)\varepsilon
        \]
    \end{claim}
    \begin{proof}
        From the above construction of $ X' $, on a measurable subtree $ S\subseteq \triangleright^n_T \cdot x $ that is at least $ 1-\varepsilon $ of the mass of $ \triangleright_n^T\cdot x $, the average of $ f-g $ is positive. Thus, $ (1|_D\cdot f-g)|_{X'} $ is non-negative. But on the rest of $ \triangleright_T^n\cdot x $, the function $ 1_C\cdot (f-g) $ is at least $ -(M+1) $ by the definition of $ C $. Hence, the weighted average of $ 1_C\cdot (f-g) $ on all of $ \triangleright_T^n\cdot x $ is at least $ -(M+1)\cdot \varepsilon $.
    \end{proof}
    \noindent Combining Claim \ref{touse} and Lemma \ref{localglobal}, we see
    \begin{align*}
        \int_C (f-g)d\mu &= \int_X A_{1_C\cdot (f-g),\triangleright_T^n\cdot x} d\mu(x)\\
        &= \int_{X'} A_{1_C\cdot (f-g),\triangleright_T^n\cdot x} d\mu(x) + \int_{X\setminus X'} A_{1_C\cdot (f-g),\triangleright_T^n\cdot x} d\mu(x)\\
        &\geq -(M+1)\varepsilon - (M+1)\varepsilon \\
        &= -2(M+1)\varepsilon \\
        &= -\frac{c}{3} 
    \end{align*}
    But then,
    \begin{align*}
        0 &= \int_X \overline{f} d\mu \\
        &= \int_X fd\mu\\
        &= c + \int_X (f-g)d\mu\\
        &= c + \int_C (f-g)d\mu + \int_{X\setminus C} (f-g)d\mu\\
        &> c-\frac{c}{3}-\frac{c}{3}\\
        &> 0
    \end{align*}
    a contradiction.
\end{proof}
\subsection{Proof of the Backward Maximal Ergodic Theorem}
\noindent We also prove a Backward Ergodic Maximal Theorem for the uncountable case, as stated below: 
\begin{theorem}
    (Backward Ergodic Maximal Theorem) Let $ T $ be as in \ref{The Theorem}. Then for every $ f\in L^1(X,\mu) $, if we define $ f^\star  = \sup_{E_x\in \CT_x} A_{f,E_x} $ for each $ x\in X $. Then for any $ \lambda\in \mathbb{R} $, 
    \[
    \int_{\{f^\star>\lambda\}} f d\mu \geq \lambda \mu(\{f^\star>\lambda\})
    \]
\end{theorem}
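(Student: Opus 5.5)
Set $B:=\{f^\star>\lambda\}$. Replacing $f$ by $f-\lambda$ (which shifts every average $A_{f,E_x}$ by $\lambda$, since averages are normalized by $m_x(E_x)$), I may assume $\lambda=0$. The goal is then $\int_B f\,d\mu\geq 0$. Measurability of $B$ follows as in the lemma on $f^*$. The set of pairs $(z,E)$ with $E\in\CT_z$ and $A_{f,E}>0$ is Borel by Lemma \ref{BorelCT}, and $B$ is its projection, so $B$ is analytic and hence $\mu$-measurable.

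The plan follows the proof of Theorem \ref{The Theorem}, with $f^\star$ in place of $f^*$ and $B$ in place of $X$.

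\textbf{Step 1 (closure of witnesses).} If $x\in B$ and $E_x\in\CT_x$ witnesses $A_{f,E_x}>0$, I want $f^\star>0$ at $\mu^i_x$-almost every point of $E^i$ for every level $i$. I first prove that minimal witnesses have this property. Take a witness of minimal length, and among those one that is approximately mass-minimal. Suppose a positive-measure set $G\subseteq E^i$ of points $z$ had $f^\star(z)\le 0$. Then the subtree hanging below $G$ has nonpositive average. This subtree is formed by gluing via Lemma \ref{gluing} and computing masses through Proposition \ref{coherencelemma}, exactly as in Lemma \ref{importantforinvariance}. Removing it leaves a coherent tree whose average is still positive: the removed part has $A_f\le 0$, so the average of what remains is at least the original one. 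Then I replace minimality by a measurable selection. Apply Jankov--von Neumann (Theorem \ref{JVN}) to the set
\[
\{(x,E): E\in\CT_x,\ A_{f,E}>0,\ \mu^i_x(E^i\setminus B)=0 \text{ for all } i\le \ell(E)\}.
\]
The existence of minimal witnesses shows that every $x\in B$ lies in the projection of this set. The condition $\mu^i_x(E^i\setminus B)=0$ involves the non-Borel set $B$. To handle it, I would replace $B$ by a Borel set $B'$ with $\mu(B\triangle B')=0$. Because each $\mu^i_x$ is obtained by disintegrating $\mu$, this changes $\mu^i_x$-measures only for a null set of $x$. The resulting set is then Borel by Lemma \ref{cor1}, and the selection $x\mapsto E_x$ is $\sigma(\Sigma^1_1)$-measurable.

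\textbf{Step 2 (tiling within $B$).} Step 1 says that almost every point of each chosen witness lies again in $B$. So the greedy tiling of Claim \ref{tiling} can be run inside $B$. Fix $\varepsilon>0$, choose $L$ with $\mu(\{\ell(E_x)\ge L\})<\varepsilon^2/2$, and take $N\gg L/\varepsilon$. For $x\in X'$ (a set of measure at least $1-\varepsilon$), I cover $B\cap\triangleright^N_T\cdot x$ level by level. At each level, Lemma \ref{gluing} attaches the witnesses $E_y$ of the not-yet-covered points $y$ of $B$. The result is a coherent tree $S\subseteq B\cap \triangleright^N_T\cdot x$ with $\sum_i\int_{S^i} f\,d\mu^i_x\ge 0$. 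The part of $B\cap\triangleright^N_T\cdot x$ left uncovered consists of the deepest $L$ levels plus points carrying long witnesses, and it has mass at most $\varepsilon(N+1)$.

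\textbf{Step 3 (local--global bridge).} Lemma \ref{localglobal} applied to $1_B f$ gives
\[
\int_B f\,d\mu=\int_X A_{1_Bf,\triangleright^N_T\cdot x}\,d\mu(x).
\]
For $x\in X'$ I split the integrand into the contribution of $S$, which is $\ge 0$, and the uncovered contribution. The uncovered part is bounded below by truncation, as in Claim \ref{touse}: let $C=f^{-1}[-M,\infty)$ and use uniform integrability of $f$ together with absolute continuity of the integral. This gives $\int_B f\,d\mu\ge -O(\varepsilon)$, with constants depending only on $M$ and on $\delta$ as in the main proof. Letting $\varepsilon\to0$ gives $\int_B f\,d\mu\ge 0$.

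\textbf{Main obstacle.} I expect Step 1 to be the hardest step. The difficulty is showing that a minimal witness exists and that pruning a bad positive-measure region below it preserves both coherence and positivity, and this must be done measurably level by level using Lemma \ref{gluing}. On top of that, the null-set bookkeeping required to make the defining set Borel needs care.
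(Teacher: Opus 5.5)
Your route is the paper's: $B$-saturated witnesses, a Jankov--von Neumann selection, greedy tiling via the Gluing Lemma \ref{gluing}, and the local--global bridge (Lemma \ref{localglobal}) with truncation. One step in Step 1 does not work as written, though your own pruning computation repairs it.

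An \emph{approximately} mass-minimal witness gives no contradiction when you prune $G$, because the pruned part may have mass below your tolerance. Exact minimal witnesses also need not exist. If the $\mu^i_x$ are non-atomic and $f(x)\le 0$, every witness has length $\ge 1$. Shaving a small piece off its last level keeps the average positive, by absolute continuity of the integral. The paper's transfinite-induction existence claim has the same weakness.

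You do not need minimality. Start from an arbitrary witness $E$. For $i=1,\dots,\ell(E)$ in turn, prune the subtree hanging below the full bad set $G^i=E^i\setminus B$ of the current tree, truncating any level that becomes null. Your computation shows each pruning removes a part with nonpositive numerator, so the average never decreases and stays positive. Pruning at level $i$ only touches levels $\ge i$, so after finitely many steps the tree is $B$-saturated. That is exactly what your selection step needs.

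Otherwise your version is more careful than the paper's in two places. First, you make the saturation condition Borel by passing to a Borel $B'$ with $\mu(B\triangle B')=0$. Second, you tile only $B\cap\triangleright^N_T\cdot x$. The paper's tiling claim instead asserts a tree of mass at least a $(1-\varepsilon)$-fraction of the whole complete tree with average $>\lambda$, for $x$ in a set of measure at least $1-\varepsilon$. That would force $\mu(B)\ge 1-\varepsilon$, so it fails whenever $B$ is small.

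In Step 3, make the order of limits explicit. Using $1_Cf\ge f$ on the tiles (for $M\ge 0$), the uncovered mass bound, and the bound $-M$ off the good set, you get
\[
\int_B f\,d\mu\;\ge\;-2M\varepsilon-\int_{\{f<-M\}}\abs{f}\,d\mu .
\]
So let $\varepsilon\to 0$ first and then $M\to\infty$.
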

\begin{proof}
    First, we see that $ f^\star $ is $ \sigma(\Sigma^1_1) $-measurable. To see this, let $ \alpha\in \mathbb{R} $. We will show that $ \{f^\star>\alpha\} $ is analytic. Consider $ \mathcal{A}\subseteq X\times \mathcal{Y} $ defined as follows: $ (x,E)\in \mathcal{A} $ if and only if:
    \begin{enumerate}
        \item $ E\in \CT_x $
        \item $ A_{f,E}>\alpha $
    \end{enumerate}
    As we have seen earlier, the two conditions defining $ \mathcal{A} $ are Borel. Hence, $ \{f^\star>\alpha\} = \proj_X(\mathcal{A}) $, and is thus analytic.\\\\
    Now, let $ \lambda\in \mathbb{R} $. Let $ Y = \{x\in X:f^\star>\lambda\} $. We want to show that
    \[
    \int_Y fd\mu \geq \lambda \mu(Y) 
    \]
    It suffices to show that for every $ \varepsilon>0 $, $\int_Y(f-\lambda)d\mu \geq -\varepsilon $.
    \begin{definition}\label{def for min}
        Let $ x\in Y $. Then let $ E_x $ be a coherent tree that witnesses $ x\in Y $. We say $ E_x $ is \textbf{minimal} if there is no proper \textbf{subtree}, $ F_x $ such that $ A_{f,F_x} > \lambda $. \\\\
        More precisely, $ F_x $ is a \textbf{subtree} of $ E_x $ if for each $ i $,
        \[
        \mu^i_x(F_x^i\setminus E_x^i) = 0
        \]
        And, if $ F_x $ is a subtree of $ E_x $ such that $ A_{f,F_x}>\lambda $, then $ E_x $ and $ F_x $ agree ($\mu_x^i$)-almost-everywhere at each level. In particular, $ m_x(F_x) = m_x(E_x) $.
    \end{definition}
    \begin{claim}\label{minimal gives right thing}
        Suppose $ x\in Y $, and $ E_x $ is a minimal witness as in \ref{def for min}. Then for $ m_x $-almost-every $ y\in E_x $, $ y\in Y $. More precisely, for each $ 0\leq i \leq \ell(E_x) $, 
        \[
        \mu_x^i(E_x^i\setminus Y) = 0
        \]
    \end{claim}
    \noindent We call the property of $ E_x $ in the hypothesis of \ref{minimal gives right thing}, \textbf{$Y$-saturated}. 
    \begin{proof}
        (of claim) Suppose, for the sake of contradiction, that for some $ 0\leq i \leq \ell(E_x) $, $ \mu^i_x(E_x^i\setminus Y) > 0 $. Then we can simply remove the portion of $ E_x $ behind that subset and obtain a proper subtree of strictly smaller mass, on which the average of $ f $ is still greater than $ \lambda $, contradicting minimality of $ E_x $.
    \end{proof}
    \begin{claim}
        For every $ x\in Y $, there exists a minimal witnessing tree $ E_x $.
    \end{claim}
    \begin{proof}
        (of claim) Let $ x\in Y $. Since $ \sup_{E\in \CT_x} A_{f,E} > \lambda $, let $ E\in \CT  $ be a witness. We perform a transfinite induction on countable ordinals:
        \begin{itemize}
            \item Let $ \mathcal{E}_0 = E $.
            \item Suppose $ i<\omega_1 $, and we have $\mathcal{E}_i \in \CT_x $ witnessing $ A_{f,\mathcal{E}} >\lambda $. Let $ \mathcal{E}_{i+1} $ be a proper subtree of $ \mathcal{E}_i $ with $ m_x(\mathcal{E}_{i+1})\leq m_x(\mathcal{E}_i) $. 
            \item If $ i<\omega_1 $ is a limit ordinal, let $ \mathcal{E}_i = \bigcap_{j<i}\mathcal{E}_i $ (where we do coordinate-wise intersection). Then $ \mathcal{E}_i \in \CT_x $, and $ m_x(\mathcal{E}_i)\leq m_x(\mathcal{E}_j) $ for each $ j<i $.
        \end{itemize}
        Then the function $ \zeta:\omega_1\to [1,\infty) $, $ i\mapsto m_x(\mathcal{E}_i) $ must stabilize at some countable ordinal $ i $. Then $ \mathcal{E}_i $ is the minimal witness.
    \end{proof}
    \begin{claim}\label{maximaluniformization}
        There is a $ \sigma(\Sigma^1_1) $-measurable function $ \gamma:X\to \mathcal{Y} $ such that for every $ x\in Y $, $ \gamma(x) $ is $ Y $-saturated. 
    \end{claim}
    \begin{proof}
        (of claim) Define $ \mathcal{A}\subseteq X\times \mathcal{Y} $ where $ (x,E)\in \mathcal{A} $ if and only if the following are true:
        \begin{enumerate}
            \item $ E\in \CT_x $
            \item $ A_{f,E}>\lambda $
            \item $ E $ is $ Y $-saturated. 
        \end{enumerate}
        We have already established in earlier proofs that conditions (1) and (2) are Borel. And it's easy to see that condition (3) is also Borel.\\\\
        Then by Jankov von-Neumann, \ref{JVN}, there exists a $ \sigma(\Sigma^1_1) $-uniformization $ \gamma:X\to \mathcal{Y} $, as desired.
    \end{proof}
    \noindent With the uniformization, we can proceed with a proof similar to that in \ref{The Theorem}. Let $ \varepsilon>0 $. As in \ref{The Theorem}, we can assume that $ f $ is bounded below. 
    \begin{claim}
        For any $ \varepsilon>0 $, there exists $ N\in \mathbb{N} $ and subset $ X'\subset X $ of measure $ \geq 1-\varepsilon $ such that for all $ x\in X' $, there exists a coherent subtree $ S $ of complete tree $ \triangleright^N_T\cdot x $ so that $ m_x(S)\geq (1-\varepsilon)m_x(\triangleright_T^N\cdot x) $ and with
        \[
        A_{f,S}>\lambda 
        \]
    \end{claim}
    \begin{proof}
        The proof of the claim follows identically to that of \ref{tiling} in which we use a greedy argument using the gluing lemma, \ref{gluing}, with the measurable uniformization obtained in \ref{maximaluniformization}.
    \end{proof}
    \noindent We further have, using the same argument as earlier,
    \begin{claim}
        For each $ x\in X' $ above,
        \[
        A_{1_Y\cdot(f-\lambda),\triangleright^N_T\cdot x} \geq -\frac{\varepsilon}{2}
        \]
    \end{claim}
    \noindent Finally, by \ref{localglobal}, we see
    \begin{align*}
        \int_Y (f-\lambda)d\mu &= \int A_{1_Y\cdot (f-\lambda),\triangleright_T^N\cdot x}d\mu\\
        &= \int_{X'} A_{1_Y\cdot (f-\lambda),\triangleright^N_T\cdot x} d\mu + \int_{X\setminus X'}A_{1_Y\cdot (f-\lambda),\triangleright^N_T\cdot x} d\mu \\
        &\geq \frac{\varepsilon}{2} + \int_{X\setminus X'} A_{1_Y\cdot (f-\lambda),\triangleright^N_T\cdot x}d\mu
    \end{align*}
    Taking $ X' $ to be sufficiently close measure to $ X $ and since $ f $ can be assumed to be bounded below, we can conclude $ \int_{Y}(f-\lambda)d\mu \geq -\varepsilon $, as desired.
\end{proof}
\subsection{Convergence in \texorpdfstring{$L^p$}{Lp}}
\noindent As in the countable case, we also have convergence in $ L^p $ if we average over complete trees. 
\begin{corollary}
    Let $ T $ be as in \ref{The Theorem}. Let $ 1\leq p < \infty $. If $ f\in L^p(X,\mu) $, let $ f'_N(x) = A_{\triangleright^N_T\cdot x, f} $. Then $ f'_N $ converges to $ \overline{f} $ for almost every $ x $, and also in $ L^p $.
\end{corollary}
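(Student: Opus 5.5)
The plan is to argue exactly as for the classical mean ergodic theorem: first obtain almost everywhere convergence from Theorem \ref{The Theorem}, then upgrade to $L^p$ convergence by proving it on a dense class and extending with a uniform bound. Note that $f'_N(x) = \frac{1}{N+1}\sum_{i=0}^N P_{T^i}(f)(x)$, since the complete tree $\triangleright^N_T\cdot x$ has $i$-th level $T^{-i}\{x\}$ with mass $\mu^i_x(T^{-i}\{x\})=1$. The identity $P_{T^i} = P_T^i$, from the inductive lemma in the local-global section, will be used throughout.

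\textbf{Step 1 (a.e. convergence).} Since $L^p\subseteq L^1$ on a probability space, and the complete trees $\triangleright^N_T\cdot x$ are coherent trees with $m_x(\triangleright^N_T\cdot x)=N+1\nearrow\infty$, Theorem \ref{The Theorem} applies along this particular sequence of trees. It gives $f'_N(x)\to\overline{f}(x)$ for almost every $x$.

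\textbf{Step 2 (uniform $L^p$ bound).} Corollary \ref{Lp-contraction} gives $\norm{P_T g}_p\le\norm{g}_p$. Iterating, $\norm{P_{T^i}g}_p\le\norm{g}_p$, and then the triangle inequality gives $\norm{g'_N}_p\le\norm{g}_p$ for all $N$ and all $g\in L^p$. The map $g\mapsto g'_N$ is therefore a linear contraction on $L^p$. The same holds for $g\mapsto\overline{g}$, by conditional Jensen.

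\textbf{Step 3 (bounded functions, then density).} If $g\in L^\infty$, then $\abs{g'_N}\le\norm{g}_\infty$ everywhere, because each $\mu^i_x$ is a probability measure. Step 1 gives $g'_N\to\overline{g}$ a.e., so dominated convergence yields $g'_N\to\overline{g}$ in $L^p$. For general $f\in L^p$ and $\varepsilon>0$, choose a bounded $g$ with $\norm{f-g}_p<\varepsilon$. Then
\[
\norm{f'_N-\overline{f}}_p\le\norm{(f-g)'_N}_p+\norm{g'_N-\overline{g}}_p+\norm{\overline{g-f}}_p\le 2\varepsilon+\norm{g'_N-\overline{g}}_p,
\]
using linearity of $f\mapsto f'_N$ and of conditional expectation. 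Taking $\limsup_N$ and letting $\varepsilon\to0$ gives the $L^p$ convergence.

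\textbf{Main obstacle.} The subtle point is Step 1: checking that the limit in Theorem \ref{The Theorem}, taken over all coherent trees with mass tending to infinity, does specialize to the sequence of complete trees, and that the complete tree genuinely belongs to $\CT_x$. The levels $T^{-i}\{x\}$ are $\mu^i_x$-measurable with full mass, and coherence holds trivially because $T^{-(i+1)}\{x\}=T^{-1}[T^{-i}\{x\}]$. This should hold on the conull set $X_0$ of Remark \ref{universeofinterest}, possibly after discarding a further null set where $\mu^i_x$ fails to be supported on $T^{-i}\{x\}$.
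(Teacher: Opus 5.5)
Your proposal is correct and follows the paper's proof essentially step for step: almost-everywhere convergence by specializing the Backward Ergodic Theorem to the complete trees (mass $N+1\nearrow\infty$), $L^p$ convergence for bounded functions by dominated convergence, and a $3\varepsilon$-approximation argument for general $f\in L^p$ using the $L^p$-contraction of the transfer operator and of conditional expectation. Your version is slightly more explicit than the paper's. You write $f'_N=\frac{1}{N+1}\sum_{i=0}^N P_{T^i}(f)$, pass the contraction from $P_T$ to each $P_{T^i}$ and then to the average, and check that complete trees are coherent; the paper uses all three without comment.
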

\begin{proof}
    First note that since $ X $ is a probability space, $ f\in L^1 $. Furthermore, as $ N\to \infty $, $ m_x(\triangleright^N_T\cdot x) = N+1 $, which increases to $ \infty $, we have convergence of $ f'_N $ almost everywhere from \ref{The Theorem}. \\\\
    Now, note that if $ f\in L^\infty $, then $ \abs{f'_N}\leq \norm{f}_\infty $ almost everywhere, so by Dominated Convergence, $ f'_N\to \overline{f} $ in $ L^p $.\\\\
    Thus, suppose $ f $ is not bounded. Let $ \{f_k:k\in \omega\} $ be a sequence of $ L^\infty $ functions that converge to $ f $ in $ L^p $. Let $ \varepsilon>0 $. Fix $ k $ large enough so that $ \norm{f-f_k}<\frac{\varepsilon}{3} $. Then for every $ N\in \mathbb{N} $, note that
    \begin{align*}
        \norm{f'_N - \overline{f}}_p &\leq \norm{f'_N - (f_k)'_N}_p + \norm{(f_k)'_N - \overline{f_k}}_p + \norm{\overline{f_k} - \overline{f}}_p
    \end{align*}
    By \ref{Lp-contraction}, the first part of the sum is at most $ \frac{\varepsilon}{3} $, and by properties of conditional expectation, the third part of the sum is at most $ \frac{\varepsilon}{3} $ too. Finally, as $ f_k\in L^\infty $, let $ M $ be large such that for all $ N>M' $, $ \norm{(f_k)'_N - \overline{f_k}}_p <\frac{\varepsilon}{3}$. 
\end{proof}
\section{Appendix}
\subsection{Recurrence}
\noindent The purpose of this section is to provide details for the proof of Lemma \ref{T(A)=A}. 
\begin{definition}
    Set $ A\subseteq X $ is \textbf{$ T $-wandering} if for every $ m,n\in\mathbb{N} $, with $ m\neq n $, $ T^{-n}[A]\cap T^{-m}[A] $ are disjoint. 
\end{definition}
\noindent Clearly, any $ T $-wandering set is measure $ 0 $.
\begin{definition}
    Set $ A\subseteq X $ is \textbf{$ T $-recurrent} if for every $ x\in A $, $ \exists n\geq 1 $ so that $ T^n(x)\in A $. \\\\
    And $ A $ is \textbf{$ \mu $-nowhere $ T $-recurrent} every positive measure subset of $ A $ is not $ T $-recurrent.  
\end{definition}
\begin{proposition}
    Every positive measure set $ A $ is $ T $-recurrent almost everywhere.  
\end{proposition}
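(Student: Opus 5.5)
We prove the statement in the form it is used: for every measurable $A$ with $\mu(A)>0$, almost every $x\in A$ has $T^n(x)\in A$ for some $n\geq 1$. This is Poincaré recurrence, and we derive it from the observation that $T$-wandering sets are null. Let
\[
B=\{x\in A: T^n(x)\notin A \textmd{ for all } n\geq 1\} = A\setminus \bigcup_{n\geq 1} T^{-n}[A].
\]
This set is Borel whenever $A$ is, since $T$ is Borel measurable. It suffices to show $\mu(B)=0$. Then $A\setminus B$ is a conull subset of $A$ on which every point returns to $A$, which is exactly the statement.

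The key step is to show that $B$ is $T$-wandering. Take $m<n$ and suppose $x\in T^{-m}[B]\cap T^{-n}[B]$. Put $y=T^m(x)$. Then $y\in B\subseteq A$, and $T^{n-m}(y)=T^n(x)\in B\subseteq A$. Since $n-m\geq 1$, this contradicts the definition of $B$. Hence the sets $T^{-n}[B]$, $n\in\N$, are pairwise disjoint.

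To finish, we check that wandering sets are null, which the paper asserts as clear. Since $T$ is measure-preserving, $\mu(T^{-n}[B])=\mu(B)$ for every $n$. By countable additivity,
\[
\sum_{n\in\N}\mu(B)=\mu\Bigl(\bigsqcup_{n\in\N}T^{-n}[B]\Bigr)\leq \mu(X)=1,
\]
which forces $\mu(B)=0$.

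If we also want the stronger conclusion that almost every $x\in A$ returns to $A$ infinitely often, we apply the same argument to $T^k$ for each $k\geq 1$ (each $T^k$ is measure-preserving). This gives null sets $B_k$ such that every $x\in A\setminus\bigcup_k B_k$ returns to $A$ under some iterate of $T^k$ for every $k$, which yields infinitely many returns. Consequently, every $\mu$-nowhere $T$-recurrent set is null, which is how the result is used in Lemma \ref{T(A)=A}.

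There is no serious obstacle. The only points needing care are that $B$ is measurable, which holds because $T$ is Borel, and that the argument uses the probability (finiteness) assumption on $\mu$ in the final summation.
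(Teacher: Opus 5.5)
Your proof is correct and follows essentially the same route as the paper: the set of points of $A$ that never return to $A$ is shown to be $T$-wandering by the same argument of shifting by $T^m$, and is therefore null. You also fill in the step the paper only calls clear, namely that a wandering set is null because its preimages are disjoint and have equal measure in a probability space; your addendum giving infinitely many returns by applying the argument to each $T^k$ is also sound.
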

\begin{proof}
    Let $ A' = \{x\in A:\forall n\geq 1 ,(T^n(x)\notin A)\} $. Then $ A' $ is $ T $-wandering. Indeed, suppose $ n,m\geq 1 $ with $ n< m $, and suppose for the sake of contradiction that $ z\in T^{-n}[A']\cap T^{-m}[A'] $. Then $ T^n(z)\in A' $, and $ T^m(z)\in A' $. But $ T^m(z) = T^{m-n}(T^n(z)) \in A'$, a contradiction. \\\\
    Thus, since $ A' $ is $ T $-wandering, it is measure $ 0 $, so $ A\setminus A' $ is the desired witness to $ T $-recurrence.
\end{proof}
\subsection{Connections to the Countable Case}
\noindent Our result implies Tserunyan-Zomback's previous result in the countable case. This is because if $ T $ is countable-to-one, then the measure disintegration is indeed the Radon-Nikodym cocycle. More specifically, if $ \rho $ is the Radon-Nikyodym Cocycle for a countable-to-one $ T $, then for $ A\subseteq T^{-1}\{x\} $,
\[
\mu_x^1(A) = \sum_{y\in A} \rho_x(y)
\]
To see this, it suffices to check that the quantity given in the right-hand-side is, indeed, a disintegration.
\begin{proposition}
    Suppose $ T:X\rightarrow X $ is an aperiodic countable-to-one measure preserving transformation. Let $ x\mapsto \mu_x^i $ be corresponding measure disintegrations. Let $ \rho $ be its Radon-Nikodym cocyle, and for each $ x\in X $, $ i\in \N $, define $ \nu_i^x:\mathcal{B}\rightarrow [0,1] $ by
    \[
    \nu_x(A) = \sum_{y\in A\cap T^{-i}\{x\}} \rho_x(y)
    \]
    Then for every $ i $ and almost every $ x\in X $, $ \nu^i_x = \mu^i_x $.
\end{proposition}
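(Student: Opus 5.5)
The plan is to check directly that the family $\{\nu^i_x\}$ satisfies the defining properties of a disintegration of $\mu$ over $T^i$, and then to appeal to the uniqueness clause of Theorem \ref{disintegration} to conclude $\nu^i_x=\mu^i_x$ for a.e. $x$. Here $\rho_x(y)$ is interpreted as the Radon--Nikodym cocycle $\frac{d\mu\circ\gamma^{-1}}{d\mu}$ on $E_T$, normalized at $x$. Concretely, I intend to use the standard characterization of $\rho$ through the mass transport principle: for every nonnegative Borel $F$ on $E_T$,
\[
\int\sum_{y\in[x]_{E_T}} F(x,y)\,d\mu(x)=\int\sum_{y\in[x]_{E_T}}F(y,x)\rho_x(y)\,d\mu(x).
\]

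The order of steps is as follows.

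\textbf{Step 1 (support).} $\nu^i_x$ is, by definition, concentrated on $T^{-i}\{x\}$. That set is countable because $T$ is countable-to-one, so $\nu^i_x$ is a well-defined measure on all of $\mathcal{B}$. Measurability of $x\mapsto\nu^i_x(A)$ for Borel $A$ will come from Lusin--Novikov. We write $T^{-i}\{x\}$ as a union of graphs of countably many Borel partial inverses $s_k$, so that
\[
\nu^i_x(A)=\sum_k 1_A(s_k(x))\,\rho_x(s_k(x)),
\]
and each term is Borel.

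\textbf{Step 2 (total mass).} We must show $\nu^i_x(X)=1$ a.e., i.e. $\sum_{y\in T^{-i}\{x\}}\rho_x(y)=1$. Apply mass transport with
\[
F(x,y)=1_B(y)\,1[T^i x=y]
\]
for a Borel set $B$. The left side is $\mu(T^{-i}B)=\mu(B)$. The right side is
\[
\int_B\sum_{y\in T^{-i}\{x\}}\rho_x(y)\,d\mu(x).
\]
Since $B$ is arbitrary, the inner sum equals $1$ a.e. This computation is the main obstacle. It is exactly where measure preservation of $T$ (hence of $T^i$) enters, and one must be careful that the cocycle is oriented so that $\rho_x(y)$ weights $y$ relative to $x$.

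\textbf{Step 3 (integration identity).} For Borel $E$, apply the same principle with $F(x,y)=1_E(x)\,1[T^ix=y]$. The left side is $\mu(E)$, and the right side is
\[
\int\sum_{y\in T^{-i}\{x\}\cap E}\rho_x(y)\,d\mu(x)=\int\nu^i_x(E)\,d\mu(x).
\]
So $\int\nu^i_x(E)\,d\mu=\mu(E)$.

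\textbf{Step 4 (conclusion).} Steps 1--3 show that $\{\nu^i_x\}$ is a family of probability measures supported on the fibers of $T^i$ with the disintegration property. The uniqueness part of Theorem \ref{disintegration}, applied to $T^i$, gives $\nu^i_x=\mu^i_x$ for $\mu$-a.e. $x$. Intersecting over the countably many $i$ yields a single conull set on which the identity holds for all $i$. Alternatively, once the case $i=1$ is done, the case of general $i$ follows by induction from Proposition \ref{coherencelemma} together with the cocycle identity $\rho_x(z)=\rho_x(y)\rho_y(z)$.
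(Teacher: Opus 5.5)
Your proposal is correct and follows essentially the same route as the paper: check that $\{\nu^i_x\}$ has the defining properties of a disintegration of $\mu$ over $T^i$ (fiber support, measurability, total mass one, and $\int \nu^i_x(E)\,d\mu(x)=\mu(E)$), then invoke the uniqueness clause of the measure disintegration theorem. The only difference is level of detail: you derive the total-mass and integration identities from the mass transport principle and get measurability from Lusin--Novikov, whereas the paper cites the first two from the local--global bridge of Tserunyan--Zomback and takes measurability from the Borelness of $\rho$.
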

\begin{proof}
    Let $ i\in \N $. It suffices to show that $ x\mapsto \nu^i_x $ is also a disintegration of $ T^i $. First, indeed $ \nu^i_x $ is a probability measure on $ X $. This follows from the Local-Global-Bridge in \cite{anushpaper}. We then check that it is a disintegration of $ T^i $.
    \begin{itemize}
        \item Clearly $ \nu^i_x $ is supported on $ T^{-i}\{x\} $.
        \item The observation that
        \[
        \int \int f(z)d\nu^i_x d\mu(x) = \int fd\mu
        \]
        follows from the Local-Global bridge in \cite{anushpaper}
        \item Finally, if $ f\in L^1(X,\mathcal{B},\mu) $, then the function
        \[
        x\mapsto \sum_{y\in T^{-i}\{x\}} f(y)\rho_x(y)
        \]
        is Borel as the Radon-Nikodym cocycle is Borel.
    \end{itemize}
    Thus, by measure disintegration, $ \nu^i_x = \mu^i_x $ for almost every $ x\in X $.
\end{proof}
\noindent We also check that the average of $ f $ over a finite-height tree rooted at $ x $, $ \tau_x $, is indeed the average computed here.
\begin{proposition}
    Suppose $ T:X\rightarrow X $ is an aperiodic countable-to-one measure preserving transformation and let $ f\in L^1(X,\mathcal{B},\mu) $. For each $ i $, let $ x\mapsto \mu^x_i $ be a measure disintegration. Let $ \rho $ be its Radon-Nikodym cocycle. Then for almost every $ x $ and every $ \tau_x $, a finite height tree rooted at $  x$, 
    \[
    A^\rho_f[\tau_x] = A_{f,\tau_x}
    \]
\end{proposition}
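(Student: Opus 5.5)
The plan is to unwind both averages level by level and show that the numerators and denominators agree term by term for almost every $x$. Write $\tau_x$ as a finite-height tree of height $n$ rooted at $x$. Its $i$-th level $\tau_x^i$ is a subset of $T^{-i}\{x\}$, and in the countable-to-one case it is countable, hence Borel. Regarded as a sequence $(\tau_x^0,\ldots,\tau_x^n)$ of classes in $\malg{\mu_x^i}$, it satisfies the coherence condition, since $\tau_x^{i+1}\subseteq T^{-1}[\tau_x^i]$ holds literally. The levels with positive mass give an element of $\CT_x$; levels of zero $\mu_x^i$-mass contribute nothing to either side, so we may discard them.

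\textbf{Step 1.} Fix the conull set on which the previous proposition gives $\nu^i_x=\mu^i_x$ for every $i\in\N$ simultaneously. This is a countable intersection of conull sets. On this set, $\mu^i_x(B)=\sum_{y\in B\cap T^{-i}\{x\}}\rho_x(y)$ for every Borel $B$. Since $\mu^i_x$ is then purely atomic with atoms of weight $\rho_x(y)$, for every $f\in L^1(\mu^i_x)$ we get
\[
\int_{\tau_x^i} f\,d\mu^i_x=\sum_{y\in\tau_x^i}f(y)\rho_x(y).
\]
To prove this, first check it for simple functions, which is immediate from countable additivity. Then pass to $f$ by monotone convergence, applied separately to $f^+$ and $f^-$.

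\textbf{Step 2.} Summing over $i=0,\ldots,n$, with $\mu^0_x=\delta_x$ and $\rho_x(x)=1$, gives two identities. The first is $m_x(\tau_x)=\sum_{y\in\tau_x}\rho_x(y)$. The second is
\[
\sum_{i}\int_{\tau^i_x}f\,d\mu^i_x=\sum_{y\in\tau_x}f(y)\rho_x(y).
\]
Dividing the second by the first yields $A^\rho_f[\tau_x]=A_{f,\tau_x}$. The result holds for every $\tau_x$ simultaneously, because the conull set in Step 1 does not depend on the tree.

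\textbf{Main obstacle.} The delicate point is integrability and the choice of null set. The sums $\sum_{y\in\tau_x}|f(y)|\rho_x(y)$ must be finite. By the earlier lemma, $\sum_{i\le n}P_{T^i}(|f|)(x)<\infty$ for a.e. $x$, and we intersect over all $n$ to get a single conull set that works for every height. A second issue is that $f$ is only defined up to a null set, while the cocycle sum evaluates $f$ pointwise. We handle this by fixing a Borel representative of $f$, as in Remark~\ref{universeofinterest}. The identity in Step 1 holds for this representative because the disintegration measures are atomic. Changing $f$ on a $\mu$-null set $Z$ changes neither side for a.e. $x$, since $\mu(Z)=\int\mu^i_x(Z)\,d\mu(x)$ forces $\mu^i_x(Z)=0$ for a.e. $x$. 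That in turn means $Z$ contains no atom $y$ with $\rho_x(y)>0$.
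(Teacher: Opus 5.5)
Your proposal is correct and follows essentially the same route as the paper. Both split the two averages level by level, identify each level's cocycle sum $\sum_{y\in\tau_x^i} f(y)\rho_x(y)$ with the integral against $\nu^i_x$, and conclude with the preceding proposition that $\nu^i_x=\mu^i_x$ for almost every $x$. You also supply details the paper leaves implicit: a single conull set intersected over all $i$ and independent of the tree, integrability via $P_{T^i}(|f|)$, and independence from the choice of representative of $f$.
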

\begin{proof}
    Let $ \tau_x $ be a finite-height tree rooted at $ x $. Then
    \begin{align*}
        A^\rho_f[\tau_x] &= \frac{\sum_{y\in \tau_x} f(y)\rho_x(y)}{\rho_x(\tau_x)}\\
        &= \frac{\sum_{i=0}^{\ell(\tau_x)} \sum_{y\in \tau_x^i} f(y) \rho_x(y)}{\sum_{i=0}^{\ell(\tau_x)} \sum_{y\in \tau_x^i} \rho_x(y)}\\
        &= \frac{\sum_{i=0}^{\ell(\tau_x)} \int f d\nu_x^i}{\sum_{i=0}^{\ell(\tau_x)} \nu^i_x (\tau_x^i)}\\
        &= A_{f,\tau_x}
    \end{align*}
    as desired.
\end{proof}
\noindent The last thing to verify is that for each $ x $, the set of coherent trees, $ \CT_x $ coincides with the family of finite-height trees in the countable-to-one case. But this follows because for each $ i $, the set $ T^{-i}\{x\} $ is countable, and hence any subset is measurable. 
\newpage
\printbibliography
\end{document}